\documentclass[11pt,reqno]{amsart}

\usepackage{xcolor}
\usepackage[utf8]{inputenc}
\usepackage[T1]{fontenc}
\usepackage[english]{babel}

\usepackage{microtype}
\usepackage{amsmath,amssymb,amsthm}
\usepackage{amsthm, thmtools, thm-restate}
\usepackage{mathtools} 
\usepackage{bm}        
\usepackage{enumitem}

\usepackage{graphicx}  
\usepackage{caption}
\usepackage{subcaption}
\usepackage{tikz}      
\usepackage{mathrsfs}  
\usepackage{booktabs}  
\usepackage{geometry}
\usepackage{cancel}

\theoremstyle{plain}
\newtheorem{theorem}{Theorem}[section]      
\newtheorem{lemma}[theorem]{Lemma}          
\newtheorem{proposition}[theorem]{Proposition}
\newtheorem{corollary}[theorem]{Corollary}

\newtheorem*{theorem*}{Theorem}             

\theoremstyle{definition}
\newtheorem{definition}[theorem]{Definition}

\newtheorem{remark}[theorem]{Remark}
\usepackage[hypertexnames=false]{hyperref} 


\numberwithin{equation}{section}

\newcommand{\R}{\mathbb{R}}

\usepackage{comment}
\usepackage{graphicx}  
\usepackage{caption}
\usepackage{subcaption}
\usepackage{tikz}      
\usepackage{mathrsfs}  
\usepackage{booktabs}  
\usepackage{geometry}
\usepackage{cancel}

\title[Minkowski mean curvature equation: multiplicity and asymptotics]{Positive and nodal solutions for the Minkowski mean curvature equation: multiplicity and asymptotics}

\author[A. Boscaggin]{Alberto Boscaggin}

\author[F. Colasuonno]{Francesca Colasuonno}

\author[R. Ziegele]{Ricardo Ziegele}
\address{Alberto Boscaggin, Francesca Colasuonno, Ricardo Ziegele \newline \indent 
Dipartimento di Matematica
\newline\indent
Universit\`a di Torino
\newline\indent
via Carlo Alberto 10, 10123 Torino, Italy}
\email{alberto.boscaggin@unito.it}
\email{francesca.colasuonno@unito.it}
\email{ricardoalfonso.ziegelealiaga@unito.it}

\begin{document}
\keywords{Mean curvature operator in Minkowski space, nonsmooth critical point theory, linking geometry, radial solutions, shooting method, limit profile}

\maketitle

\begin{abstract}
    We consider the Dirichlet problem for the mean curvature operator in Minkowski 
space,
\[
-\operatorname{div}\left(\frac{\nabla u}{\sqrt{1-|\nabla u|^2}}\right) 
= \lambda u + \mu h(x,u) \quad \text{in } \Omega,
\qquad u = 0 \quad \text{on } \partial\Omega,
\]
in a bounded domain $\Omega \subset \mathbb{R}^N$, where $\lambda, \mu$ are real parameters, and the nonlinearity $h$ is superlinear at $u = 0$. In particular, we study the combined effect of the parameters $\lambda,\,\mu$ on the multiplicity of solutions. In the general setting, following Szulkin's approach for nonsmooth functionals, we prove the existence, for $\lambda$ not belonging to the spectrum of the Dirichlet Laplacian and $\mu$ sufficiently large, of a global minimizing solution (with negative action level) and of a min-max solution (with positive action level). Moreover, we characterize the limiting profiles of these solutions as $\mu \to +\infty$. More precisely, when the global minimizer is positive, its limit profile is $\mathrm{dist}(\cdot,\partial\Omega)$, thus saturating, in the limit, the geometric constraint $|\nabla u|\le1$, while min-max solutions collapse uniformly to zero as $\mu\to+\infty$. A nonexistence criterion is also given for suitable values of $\lambda$ and $\mu$. Finally, when the domain $\Omega$ is a ball, using a shooting approach, we establish the existence of arbitrarily many nodal radial solutions, for every $\lambda \ge 0$ and for $\mu$ sufficiently large. 
\end{abstract}

\tableofcontents

\section{Introduction}
In this paper, we are concerned with a boundary value problem of the type 
\begin{equation}\label{eq-prel}
\begin{cases}
\displaystyle -\text{div}\left( \frac{\nabla u}{\sqrt{ 1 - |\nabla u|^2}} \right) = \lambda u + \mu h(x,u)  \qquad &\mbox{in }{\Omega}\\
u=0 &\mbox{on }{\partial\Omega},
\end{cases}
\end{equation}
where $\Omega \subset \mathbb{R}^N$ is a smooth bounded domain, $h: \Omega \times \mathbb{R} \to \mathbb{R}$ is a nonlinear term satisfying $h(\cdot,0) \equiv 0$ and $\lambda, \mu$ are real parameters.

As is well known, the differential operator appearing in the above boundary value problem arises naturally in differential geometry and general relativity: indeed, it can be meant as a prescribed mean curvature operator for $N$-dimensional spacelike Cartesian hypersurfaces in the $(N+1)$-dimensional Lorentz-Minkowski space. It also appears in several models from mathematical physics, such as Born--Infeld type theories in nonlinear electrodynamics. We refer to the seminal works \cite{BaSi82,ChYa76,Ge83} as well as to the more recent contributions \cite{BoCoFo19,BoIa19,ByIkMaMa24, MaMa25} for general results on the subject, especially in the direction of regularity issues. 

From a nonlinear analysis perspective, boundary value problems of type \eqref{eq-prel} have been the subject of extensive investigations during the last few decades. In particular, special efforts have been devoted to the study of the existence of positive solutions to \eqref{eq-prel}, via topological and variational methods. See, among many others, \cite{Az14,Az16,BeJeMa14,BeJeTo13,CoObOmRi13,CoObOmRi13b,Da16,Da17} and the references therein. In the ODE case $N=1$, as well as for the PDE problem on radial domains, the existence of nodal (i.e., sign-changing) solutions has been investigated as well, using shooting arguments \cite{BoGa19,BoCoNo20} and bifurcation theory \cite{DaWa17}.

In order to describe our contributions, and in spite of the fact that much more general nonlinear terms will be allowed, we focus from now on the model problem
\begin{equation}\label{eq-model}
\begin{cases}
\displaystyle -\text{div}\left( \frac{\nabla u}{\sqrt{ 1 - |\nabla u|^2}} \right) = \lambda u + \mu |u|^{p-2}u \qquad &\mbox{in }{\Omega}\\
u=0 &\mbox{on }{\partial\Omega}
\end{cases}
\end{equation}
where $p > 2$. The existence of positive solutions to this problem has been considered both in \cite{CoObOmRi13b} using topological degree theory and in \cite{BeJeMa14} using variational methods. In particular, it was shown in \cite{BeJeMa14} that problem \eqref{eq-model} admits a variational formulation within the framework of Szulkin's nonsmooth critical point theory \cite{Sz86}. In \cite{BeJeMa14}, it was then proved that, for $\lambda \leq 0$ and $\mu$ large enough, problem \eqref{eq-model} has at least two positive solutions: a first one is a global minimizer of the associated action functional (and so, a ground-state solution), while the second one is obtained using the nonsmooth version of the mountain pass theorem. This picture was later greatly improved in the radial case, and when $\lambda = 0$: indeed, it was shown in \cite{BoGa19,DaWa17} that pairs of nodal solutions having more and more zeros appear when $\mu$ becomes larger and larger. 

On the other hand, the problem \eqref{eq-model} for $\mu = 0$ has been considered in \cite{Mi21}. Using an approximation technique together with a $\Gamma$-convergence argument, it is proved therein that the existence of solutions is ensured if and only if $\lambda > \lambda_1$, where $\lambda_1$ is the first eigenvalue of the Dirichlet problem for the Laplacian.

Despite this rich literature, to the best of our knowledge
a systematic investigation of the combined effect of the spectral term $\lambda u $ and of the nonlinear term $\mu \vert u \vert^{p-2} u$ in problem \eqref{eq-model} is still missing. Moreover, since 
the large-$\mu$ regime plays a central role in the available existence results, it seems natural to investigate the asymptotic behaviour of the solutions as $\mu \to +\infty$. The main contributions of the present paper are precisely in these directions and are summarized in the two theorems below.

In the first one, we study existence, multiplicity, and asymptotic behaviour for $\mu \to +\infty$ for the genuine PDE problem.

\begin{theorem}\label{thm:intro}
Let $p>2$. The following holds true:
\begin{itemize}
\item \textbf{Existence and multiplicity.} For every $\lambda \in \mathbb{R}$ not belonging to the spectrum of the Dirichlet Laplacian, there exists $\mu^*(\lambda) > 0$ such that, for every $\mu > \mu^*(\lambda)$, problem \eqref{eq-model} has at least two solutions, $u_\mu^{(s)}$ and $u_{\mu}^{(l)}$, one of which, say $u_\mu^{(l)}$, is positive.
\item \textbf{Asymptotics.} For fixed $\lambda$ and $\mu \to +\infty$, it holds that
$$
u_\mu^{(s)} \to 0, \quad \mbox{ and } \quad u_{\mu}^{(l)} \to \textnormal{dist}(\cdot,\partial\Omega)
$$
uniformly in $\overline\Omega$.
\end{itemize}
\end{theorem}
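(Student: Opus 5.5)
The plan is to work in Szulkin's framework, as in \cite{BeJeMa14}. Let $K=\{u\in W^{1,\infty}_0(\Omega): \|\nabla u\|_\infty\le 1\}$, viewed as a convex subset of, say, $C(\overline\Omega)$ or $H^1_0(\Omega)$. Consider
\[
I_\mu(u)=\int_\Omega\bigl(1-\sqrt{1-|\nabla u|^2}\bigr)\,dx+\chi_K(u)-\frac{\lambda}{2}\int_\Omega u^2\,dx-\frac{\mu}{p}\int_\Omega |u|^p\,dx .
\]
By the result of \cite{BeJeMa14}, critical points of $I_\mu$ in Szulkin's sense are solutions of \eqref{eq-model}. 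Since $K$ is compact in $C(\overline\Omega)$ and $\|u\|_\infty\le \mathrm{diam}\,\Omega$ on $K$, the functional $I_\mu$ is bounded below and satisfies the Palais--Smale condition automatically.

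\textbf{Large solution.} I would take $u^{(l)}_\mu$ to be a global minimizer of the modified functional in which $|u|^p$ is replaced by $(u^+)^p$ and $u^2$ by $(u^+)^2$. The minimizer then turns out to be nonnegative. Testing with $d=\mathrm{dist}(\cdot,\partial\Omega)\in K$ gives
\[
\inf I_\mu\le |\Omega|+C|\lambda|-\frac{\mu}{p}\int_\Omega d^p\,dx<0
\]
for $\mu$ large, so the minimizer is nontrivial. Positivity follows from the strong maximum principle for the regular, bounded-curvature solution, since the right-hand side is $\ge -|\lambda| u$.

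For the asymptotics, dividing by $\mu$ shows that $u^{(l)}_\mu$ minimizes
\[
\mu^{-1}\bigl(\text{bounded terms}\bigr)-\frac1p\int_\Omega (u^+)^p\,dx
\]
over $K$. By Arzel\`a--Ascoli every limit point $v$ maximizes $\int_\Omega (v^+)^p$ over $K$. Since every $v\in K$ satisfies $v\le d$ pointwise (1-Lipschitz with zero boundary values), the unique maximizer is $d$. Hence $u^{(l)}_\mu\to d$ uniformly.

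\textbf{Small solution.} Here I would use a linking argument, since $\lambda$ may exceed $\lambda_1$ and $0$ need not be a local minimum. Near $0$ the functional behaves like
\[
Q(u)=\frac12\int_\Omega\bigl(|\nabla u|^2-\lambda u^2\bigr)\,dx .
\]
Since $\lambda\notin\sigma(-\Delta)$, we split $H^1_0=V^-\oplus V^+$ with $Q$ negative definite on the finite-dimensional $V^-$. The key estimate is
\[
\int_\Omega \bigl(1-\sqrt{1-s^2}\bigr)\ge \frac12\int_\Omega |\nabla u|^2 .
\]
Combined with $\|u\|_\infty$ small, this should give $I_\mu\ge c\rho^2$ on a small sphere $\partial B_\rho\cap V^+$ measured in a norm under which $\int|u|^p=o(\|u\|^2)$. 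The natural choice is the $C^0$ or $W^{1,\infty}$ norm restricted to $K$, and then interpolating.

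For the linking set, I would use $Q\le0$ on $V^-\oplus\mathbb{R}e$, with $e\in V^+$ fixed, together with large $\mu$ (or large amplitude, truncated to stay in $K$) to make $I_\mu\le0$ on the boundary of the relevant half-ball. Szulkin's nonsmooth linking theorem then gives a critical level $c_\mu>0$, which distinguishes $u^{(s)}_\mu$ from $u^{(l)}_\mu$ since $I_\mu(u^{(l)}_\mu)<0$. To get $u^{(s)}_\mu\to0$, I would show that the linking level tends to $0$ by scaling the linking sets with $\mu$. Taking radius $R_\mu\to0$, we have $\sup I_\mu\le C R_\mu^2$ on the linking set, while $\mu R^p$ dominates $R^2$ on the outer boundary once $R_\mu\sim \mu^{-1/(p-2)}$. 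This gives $c_\mu\to0$.

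Then I would use the equation: test with $u^{(s)}_\mu$ and combine with the Pohozaev-free identity. Together with the level bound, this should give $\int|\nabla u^{(s)}_\mu|^2\to0$, hence uniform convergence to $0$ by compactness of $K$ in $C(\overline\Omega)$.

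The main obstacle is exactly this last step together with the linking geometry: controlling $I_\mu$ from below on $V^+$ uniformly when $\mu$ is large and the sphere radius shrinks. The lower bound $c\rho^2-\mu C\rho^p$ only works for $\rho\lesssim\mu^{-1/(p-2)}$, so the two radii must be chosen compatibly. Deducing smallness of $u^{(s)}_\mu$ from smallness of the critical level requires an $L^\infty$ estimate; for this I would exploit the uniform $C^{1,\alpha}$ regularity of solutions with $\|\nabla u\|_\infty<1$.
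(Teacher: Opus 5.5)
Your existence argument and your asymptotics for $u^{(l)}_\mu$ follow the paper's route. That route is Szulkin's framework on $C(\overline\Omega)$, global minimization, linking over the spectral splitting, then Arzel\`a--Ascoli plus uniqueness of $\mathrm{dist}(\cdot,\partial\Omega)$ as maximizer. The paper obtains a one-signed minimizer from $I_\mu(u)=I_\mu(|u|)$ rather than by truncation, and your strong maximum principle step giving strict positivity is a legitimate addition.

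There are two small inaccuracies in your linking sketch, neither fatal.
\begin{itemize}
\item $Q$ is not $\le 0$ on $V^-\oplus\R e$. On the faces of $\partial Q$ containing $e$, it is the largeness of $\mu$ that gives $I_\mu\le 0$. On the $V^-$-ball you need $\lambda>\lambda_k$ strictly and a small radius, so that $1-\sqrt{1-s^2}\le\frac{1+\varepsilon}{2}s^2$ absorbs the superquadratic part of the area term.
\item On the $C^0$-sphere of $W\cap K$ you do not get $I_\mu\ge c\rho^2$. You only get a lower bound through $\int_\Omega w^2\,dx\gtrsim\rho^{N+2}$, or positivity by compactness as in the paper. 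This is still enough for the linking theorem.
\end{itemize}

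\textbf{The genuine gap is the proof of $u^{(s)}_\mu\to 0$.} Testing the equation with $u=u^{(s)}_\mu$ gives
\[
\int_\Omega\frac{|\nabla u|^2}{\sqrt{1-|\nabla u|^2}}\,dx=\lambda\int_\Omega u^2\,dx+\mu\int_\Omega|u|^p\,dx .
\]
Eliminating $\mu\int_\Omega|u|^p$ with the level identity yields
\[
c_\mu=\int_\Omega F(|\nabla u|)\,dx-\lambda\Bigl(\frac12-\frac1p\Bigr)\int_\Omega u^2\,dx,\qquad F(s):=1-\sqrt{1-s^2}-\frac{s^2}{p\sqrt{1-s^2}} .
\]
Here $F>0$ near $0$ but $F(s)\to-\infty$ as $s\to1^-$. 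So $c_\mu\to0$ gives no control on $\int_\Omega|\nabla u|^2$. Nor is there a $C^{1,\alpha}$ bound uniform in $\mu$: the constraint $\|\nabla u\|_\infty<1$ is not uniform, and $\mu|u|^{p-1}$ is not a priori bounded. The step you single out as the main obstacle is therefore left unproved, and the route through it does not close.

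The missing observation is that you need neither an upper bound on the level (hence no rescaling of the linking sets) nor any regularity. The lower bound $c_\mu>0$ suffices. On $K$ the area term is at most $|\Omega|$ and $\|u\|_\infty\le\frac12\mathrm{diam}(\Omega)$, so
\[
0<I_\mu(u^{(s)}_\mu)\le|\Omega|+\frac{|\lambda|}{2}\int_\Omega (u^{(s)}_\mu)^2\,dx-\frac{\mu}{p}\int_\Omega|u^{(s)}_\mu|^p\,dx .
\]
Hence $\int_\Omega|u^{(s)}_\mu|^p\,dx=O(1/\mu)$. By compactness of $K$ in $C(\overline\Omega)$, every uniform limit point as $\mu\to+\infty$ has zero $L^p$-norm, hence is $0$, and the whole family converges uniformly to $0$.

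You can even make this quantitative. A $1$-Lipschitz $u\in K$ with $|u(x_0)|=m$ satisfies $|u|\ge m/2$ on $B_{m/2}(x_0)\subset\Omega$, which gives $\|u^{(s)}_\mu\|_\infty\le C\mu^{-1/(p+N)}$.

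This is exactly the paper's argument, phrased there by contradiction: a nonzero limit $u_\infty$ would force $I_\mu(u_\mu)\le C-\mu\int_\Omega H(x,u_\mu)\,dx\to-\infty$.
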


Our second main result, instead, improves the multiplicity picture when the domain $\Omega$ is the ball $\mathcal{B}_R(0)$ and radial solutions are considered. In the statement, and throughout the paper, we denote by $\lambda_k^{\textnormal{rad}}$, with $k\geq 1$, the $k$-th radial eigenvalue of the Dirichlet Laplacian, and we set by convention $\lambda_0^{\textnormal{rad}} = 0$.

\begin{theorem}\label{thm:radialintro}
Let $\Omega = \mathcal{B}_R(0)$ and $p>2$. There exists a family $\{\mu_j^\star\}_{j= 1}^{+\infty}$ of functions $\mu_j^\star : [0, +\infty) \to [0, +\infty)$, satisfying, for every $j \geq 1,$
\begin{enumerate}
    \item[(i)] $\mu_j^\star(\lambda) = 0$ if $\lambda > \lambda_j^{\textnormal{rad}}$,
    \item[(ii)] $\mu_j^\star(\lambda) \leq \mu_{j+1}^\star(\lambda)$ for all $\lambda \geq0$,
\end{enumerate}
such that problem \eqref{eq-model} has at least $2j-k$ radial solutions when
$$\lambda \in [\lambda_k^{\textnormal{rad}}, \lambda_{k+1}^{\textnormal{rad}}) \quad \text{and} \quad \mu > \mu_j^\star(\lambda) \qquad \text{for some }  j\geq \max\{1,k\}.$$
Precisely, these solutions can be labeled as $\{u_{\mu,\ell}^{(s)}\}_{\ell =k+1}^{j}$, 
$\{u_{\mu,\ell}^{(l)} \}_{\ell = 1}^{j}$ in such a way that
\begin{itemize}
    \item[(a)] $0 < u_{\mu,k+1}^{(s)}(0) < \dots < u_{\mu,j}^{(s)}(0) < u_{\mu,j}^{(l)}(0) < \dots < u_{\mu,1}^{(l)}(0),$
    \item[(b)] $u_{\mu,\ell}^{(s)}$ has exactly $\ell -1$ zeros on $(0,R)$, is strictly decreasing in the interval from $0$ to the first zero, and has exactly one local extremum between any two consecutive zeros, for every $\ell = k+1, \dots, j$,
    \item[(c)] $u_{\mu,\ell}^{(l)}$ has exactly $\ell -1$ zeros on $(0,R)$, is strictly decreasing in the interval from $0$ to the first zero, and has exactly one local extremum between any two consecutive zeros, for every $\ell = 1, \dots, j$.
\end{itemize}
\end{theorem}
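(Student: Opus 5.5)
We argue by a shooting method on the radial ODE
\[
-\frac{1}{r^{N-1}}\Bigl(\frac{r^{N-1}u'}{\sqrt{1-u'^2}}\Bigr)' = \lambda u + \mu |u|^{p-2}u,\qquad u'(0)=0,\ u(0)=d>0,
\]
and count zeros of the solution $u(\cdot;d)$ on $(0,R]$ as $d$ varies. Setting $\phi(s)=s/\sqrt{1-s^2}$ and $v=\phi(u')$ puts the problem in the form of a planar system with bounded velocity $|u'|<1$. Local existence and uniqueness hold near $r=0$; a standard contraction on the integral equation works since the right-hand side is locally Lipschitz. Global continuation on $[0,R]$ follows because $|u'|<1$ gives $|u|\le d+R$, so $v$ stays bounded on compact sets. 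Continuous dependence on $d$ then follows, and zeros of $u(\cdot;d)$ are simple, since $u=u'=0$ would force $u\equiv0$. Hence the number $n(d)$ of zeros in $(0,R)$ can change only when a zero crosses $r=R$. Moreover, between consecutive zeros the solution has exactly one extremum, because at a critical point the equation gives $(r^{N-1}v)'$ with sign opposite to $u$. Monotonicity from $0$ to the first zero follows in the same way, since $(r^{N-1}v)'<0$ while $u>0$. This yields the qualitative properties (b), (c) once solutions with prescribed zeros are found.

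\textbf{Three regimes in $d$.}
\begin{itemize}
\item \emph{Small $d$.} Rescale $w=u/d$. As $d\to0^+$, $w$ converges in $C^1$ to the solution of the linear problem $-\Delta w=\lambda w$, $w(0)=1$, because $u'=O(d)$ makes $\phi$ asymptotically linear and $\mu d^{p-2}\to0$. For $\lambda\in[\lambda_k^{\rm rad},\lambda_{k+1}^{\rm rad})$ the radial Bessel-type solution has exactly $k$ zeros in $(0,R)$ and $w(R)\ne0$ (if $\lambda=\lambda_k^{\rm rad}$ one needs care, but then $k$ zeros lie in $(0,R)$ with the $k$-th at $R$, and the $\mu$-term pushes it inward; we handle this via the sign of the perturbation). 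Hence $n(d)\le k$ for small $d$. For $\lambda=0$ we get $n=0$.
\item \emph{Large $d$.} Since $|u'|<1$, we have $u(r)\ge d-r>0$ on $[0,R]$ when $d>R$. Thus $n(d)=0$ for $d>R$, and indeed for $d\ge R$.
\item \emph{Intermediate $d$ and large $\mu$.} This is the key step. We show that for every $j$ there exist $\mu_j^\star(\lambda)$ and, for $\mu>\mu_j^\star$, a value $\bar d=\bar d(\mu)\in(0,R)$ with $n(\bar d)\ge j$. Fix $\bar d$, say $\bar d=R/2$, or better a value chosen so that the energy is large. A rotation-number/energy argument in the phase plane $(u,v)$ shows that the nonlinear term makes the solution oscillate fast. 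Using the energy $E=\Phi^*(v)+\frac{\lambda}{2}u^2+\frac{\mu}{p}|u|^p$ (with $\Phi^*$ the Legendre conjugate), whose decay along orbits is controlled by $\frac{N-1}{r}|u'v|$, the time needed to make a half-turn is at most of order $|u|_{\max}/1$ plus a term that is $O(\mu^{-1})$. The reason is that when $|u|$ is bounded below the acceleration $|v'|\gtrsim\mu|u|^{p-1}$ makes $u'$ saturate near $\pm1$ rapidly, so each half-oscillation takes time $\approx 2\max|u|$. The obstacle is that the amplitude may decay through damping; we must show it stays above some $\delta>0$ for $j$ half-turns over $[0,R]$. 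We take $\bar d$ small, e.g.\ $\bar d\sim R/(4j)$, so that $j$ oscillations of amplitude $\le\bar d$ fit in $[0,R]$ with half-turn time $\approx2\bar d$. We bound the amplitude loss from the damping by comparing $E$ at successive extrema, where $\Phi^*(v)$ is large because $v$ blows up as $\mu\to\infty$. Nearby solutions (Sturm-type) ensure $n(\bar d)\ge j$. Since larger $j$ requires a larger threshold, we may take $\mu_j^\star$ nondecreasing, giving (ii). For $\lambda>\lambda_j^{\rm rad}$ the small-$d$ regime already gives $n\ge j$, so $\mu_j^\star=0$, giving (i).
\end{itemize}

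\textbf{Conclusion.} By continuity of zeros in $d$, as $d$ moves from $\bar d$ down to $0$, $n$ decreases from $\ge j$ to $k$. Each transition $\ell\to\ell-1$ occurs at some $d$ where $u(R;d)=0$ with exactly $\ell-1$ interior zeros, which gives $u^{(s)}_{\mu,\ell}$ for $\ell=k+1,\dots,j$. As $d$ moves from $\bar d$ up to $R$, $n$ drops from $\ge j$ to $0$, which gives $u^{(l)}_{\mu,\ell}$ for $\ell=1,\dots,j$. Choosing, for each $\ell$, the transition value closest to $\bar d$ yields the ordering of initial values in (a), since $d^{(s)}_\ell<\bar d<d^{(l)}_\ell$ and the $d$-values move monotonically in $\ell$ by the choice of first crossings. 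The total count is $(j-k)+j=2j-k$. The main obstacle is the intermediate-regime estimate guaranteeing many zeros uniformly, i.e.\ controlling the damping term $\frac{N-1}{r}$ near $r=0$ and keeping the amplitude bounded below while the velocity saturates.
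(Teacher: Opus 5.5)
Most of your architecture is sound and follows the paper:
\begin{itemize}
\item shooting in $d=u(0)$;
\item simplicity of zeros, so the zero count $n(d)$ on $(0,R)$ changes only when a zero crosses $r=R$; this plays the role of the paper's angle $\theta_\alpha(R)$, and your first-transition choice gives the ordering (a);
\item $n(d)=0$ for $d\ge R$;
\item the linearization of $u/d$ as $d\to0^+$, giving $n(d)\le k$ when $\lambda<\lambda_{k+1}^{\textnormal{rad}}$ and $n(d)\ge j$ when $\lambda>\lambda_j^{\textnormal{rad}}$ (hence (i)), as in Lemmas \ref{lemmak+1} and \ref{lemmak};
\item the sign of $(r^{N-1}\phi(u'))'$ for the shape properties in (b), (c).
\end{itemize}
One small slip: in the resonant case $\lambda=\lambda_k^{\textnormal{rad}}$ the linear solution has $k-1$ zeros in $(0,R)$, not $k$. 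Only the upper bound $n\le k$ is needed there, so this is harmless.

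The genuine gap is the intermediate regime. It is the real content of the theorem, since it is exactly what makes $\mu_j^\star(\lambda)$ finite, and what you give there is a heuristic, as you acknowledge. To turn the fixed-amplitude, saturated-velocity picture into a proof, you need two bounds uniform in $\mu$. First, a positive lower bound on the oscillation amplitude on some $[r_0,R]$ despite the damping. Second, an upper bound of order $2\bar d$ on the length of each nodal interval. Neither is established.

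Your mechanism for the first bound cannot work as stated. You compare $E$ at successive extrema ``where $\Phi^*(v)$ is large'', but $v=\phi(u')=0$ at extrema. The appeal to ``nearby solutions (Sturm-type)'' does not supply the second bound either.

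The first bound is within reach. Since $u'v=v^2/\sqrt{1+v^2}\le\sqrt{1+v^2}\le E+1$ for $\lambda\ge0$, you get $(r^{N-1}(E+1))'=(N-1)r^{N-2}(E+1-u'v)\ge0$. Also $u\ge\bar d/2$ on $[0,\bar d/2]$, so $E(\bar d/2)\ge\frac{\mu}{p}(\bar d/2)^p$. Together with $E'\le0$, this keeps the amplitudes on $[\bar d/2,R]$ between two positive constants independent of $\mu$, for $\mu$ large. You would still need a quantitative argument that $|u'|$ is close to $1$ on each nodal interval except on a set of length $o(1)$.

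The paper avoids this saturation analysis by working at the scale $|u|\sim\mu^{-1/p}$ instead of at fixed amplitude. Set $t=\mu^{1/p}r$, $x=\mu^{1/p}u$, $y=r^{N-1}\phi(u')$. On $[r_0,R]$ the resulting planar system obeys sign bounds independent of $\mu$; the $r$-dependence enters only through factors between $r_0^{N-1}$ and $R^{N-1}$. Meanwhile the time interval has length $\mu^{1/p}(R-r_0)\to+\infty$. The rotation lemma (Proposition \ref{Prop21}) then gives more than $(j+1)\pi$ of rotation on $[r_0,R]$ for every orbit through a fixed small circle, in the rescaled variables, at $t=\mu^{1/p}r_0$. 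A continuity argument in $\alpha\in(0,R+1)$, combined with $\theta(r_0)-\theta(0)>-\pi$, then produces the required shooting value (Lemma \ref{claimellipse}).
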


Let us now briefly comment on the ideas of the proof. 

The general PDE result, Theorem \ref{thm:intro}, is
obtained through a nonsmooth variational approach in the spirit of Szulkin's
critical point theory, similarly as in the paper \cite{BeJeMa14}. Incidentally, let us mention that the 
nonsmooth character of the problems comes from the fact that the natural energy associated with
\eqref{eq-model} is defined only on the convex set of Lipschitz continuous functions $u$ satisfying the gradient bound $\vert \nabla u\vert \leq 1$, thus preventing the use of classical
critical point arguments. Compared with the result obtained in \cite{BeJeMa14} for $\lambda \leq 0$, the solution denoted by $u_\mu^{(l)}$ turns out to be still a ground-state solution, with negative action level, and can be chosen to be one-signed.
On the other hand, the second solution, $u_{\mu}^{(s)}$, is obtained by a min-max argument. When
$\lambda<\lambda_1$, the geometry is of mountain-pass type, again as in \cite{BeJeMa14}. When, instead, $\lambda > \lambda_1$ and $\lambda$ is not a Dirichlet eigenvalue, the
mountain-pass structure is replaced by a linking geometry associated with the
spectral decomposition of the Dirichlet Laplacian. This is the point where the
spectral parameter $\lambda$ becomes decisive: the position of $\lambda$ with
respect to the Laplacian eigenvalues determines the finite-dimensional
directions along which the linking is constructed. The resulting critical point
has positive energy and is in general expected to be a sign-changing solution of \eqref{eq-model}. In the resonant case $\lambda=\lambda_k$, the conclusion of Theorem \ref{thm:intro} remains valid for $p\leq 4$; see Remark \ref{rem-resonant}. However, owing to the nonlinear nature of the differential operator, establishing a linking geometry in the general case appears to be a challenging problem that warrants further investigation.

The superscript $(s)$ and $(l)$ used for these two solutions stay for ``small'' and ``large'', and are chosen in view of the second part of Theorem \ref{thm:intro} describing their asymptotic behaviour for $\mu \to +\infty$. Indeed, we manage to prove that the min-max solution $u_\mu^{(s)}$  converges uniformly to zero, while the ground-state solution $u_\mu^{(l)}$ converges uniformly to the extremal spacelike profile
$u_\infty(x) := \operatorname{dist}(x,\partial\Omega)$. Incidentally, note that $u_\infty$ is the maximal nonnegative solution (in viscosity sense) of 
the eikonal-type limiting problem
\[
\begin{cases}|\nabla u|=1 \qquad&\text{in } \Omega\\
u=0
&\text{on } \partial\Omega.
\end{cases}\]
Both these convergence results are deduced from the action level characterization of the solutions. Of course, they are valid for every $\lambda \in \mathbb{R},$ thus complementing the results obtained in \cite{BeJeMa14} from the point of view of the asymptotics of the solution when $\mu \to +\infty$.

The proof of Theorem \ref{thm:radialintro} relies, instead, on a shooting technique. Indeed, writing $u=u(r)$, problem \eqref{eq-model} becomes
the one-dimensional singular boundary value problem
\[
\begin{cases}
-\displaystyle\left(
r^{N-1}\dfrac{u'}{\sqrt{1-(u')^2}}
\right)'
=
r^{N-1}\bigl(\lambda u+\mu |u|^{p-2}u\bigr),
& r\in(0,R),\\[2mm]
u'(0)=0,\qquad u(R)=0,
\end{cases}
\]
and a shooting procedure is then performed with respect to the initial value
$u(0)$. Passing to suitable polar coordinates in the phase-plane, the number of
zeros of the corresponding radial solution is encoded by the angular variation
of the shooting trajectory. The argument is inspired by the one in \cite{BoGa19}, dealing with $\lambda = 0$. In this case, the existence of pairs of
radial nodal solutions with an arbitrarily large number of nodal regions comes from the fact that small and large initial data do not rotate enough, whereas intermediate ones perform arbitrarily many rotations as
the parameter $\mu$ grows. In our setting, the
presence of the additional spectral term $\lambda u$ changes the picture. Indeed, for $\lambda=0$, small and large initial data behave in a symmetric way; for $\lambda>0$, instead, small
solutions already feel the linear spectral dynamics, and their rotation number
is governed by the position of $\lambda$ with respect to the radial Dirichlet eigenvalues
$\lambda_k^{\mathrm{rad}}$. This explains the count in
Theorem~\ref{thm:radialintro} : for 
$\lambda\in[\lambda_k^{\mathrm{rad}},\lambda_{k+1}^{\mathrm{rad}})$ and $\mu > \mu^\star_j(\lambda)$, one obtains $j$ large solutions, with nodal levels $1,\ldots,j$, but only
$j-k$ small solutions, corresponding to the nodal levels $k+1,\ldots,j$. Summing up, the two-parameter problem retains the double-gap structure of the one-parameter
case, but the spectral term breaks the full pairwise symmetry.

The paper is organized as follows. In Section \ref{sec2}, we study the general PDE
problem. After recalling the necessary nonsmooth variational setting, we prove a
nonexistence criterion, the existence of a minimizing solution and the existence
of a positive-energy min-max solution, using mountain-pass or linking arguments
according to the position of $\lambda$. We then analyze the asymptotic behaviour
of these two solutions as $\mu\to+\infty$. Section \ref{sec3} is devoted to the radial
problem and to the shooting proof of 
Theorem~\ref{thm:radialintro}, including
the precise nodal count in terms of the radial eigenvalues of the Laplacian.
The appendix presents a technical result used in the shooting arguments.

\section{The PDE problem: a variational approach}\label{sec2}

In this section, we focus on the general PDE problem
\begin{equation}
\label{Plm}
\begin{cases}    
-\mathcal{M}(u) = \lambda u + \mu h(x,u)
    \quad &\text{in } \Omega,\\   u=0  &\text{on } \partial \Omega,
\end{cases}
\end{equation}
where
$$
\mathcal{M}(u):= \text{div}\left( \frac{\nabla u}{\sqrt{ 1 - |\nabla u|^2}} \right),
$$
$\Omega\subset \mathbb R^N$ is a $C^2$ bounded domain (with $N\ge 2$), $\lambda, \mu \in \mathbb{R}$  and $h : \Omega\times\mathbb{R} \to \mathbb{R}$. Below we list the set of assumptions that will be used in our results: 
\begin{itemize}
\item[$(h_1)$] $h$ is a Carath\'eodory function and for every $\varrho>0$ there exists $\alpha_\varrho \in L^\infty(\Omega)$ such that
$$
\ |h(x,s)| \le \alpha_\varrho(x)
\quad
\text{for a.e. } x\in\Omega\text{ and for all } |s|\le \varrho;
$$

\item[$(h_2)$] there exists $\delta >0$ such that $$ h(x,s) s > 0 \quad \text{for a.e. } x\in \Omega \text{ and for all } s \in (-\delta, \delta) \setminus \{0\};$$ 
\item[$(h_3)$] $\displaystyle{\lim_{s\to 0}\frac{H(x,s)} {s^2} }= 0$ uniformly for a.e. $x\in\Omega$, where $H(x,s):=\displaystyle{\int_0^s h(x,\tau)d\tau}$.
\end{itemize}
\begin{remark}\label{rem-h2}
We notice that, since $h(x,\cdot)$ is continuous, $(h_2)$ implies also that $h(x,0)=0$ for a.e. $x\in\Omega$. Moreover, $(h_2)$ also implies that $H(x,s)>0$ if $0<|s|\leq \delta$ and $H(x,s)=0$ if $s=0$, for a.e. $x\in\Omega$. In particular, for every $u\in L^\infty(\Omega)$ with $\|u\|_{L^\infty(\Omega)}\leq \delta$, $H(x,u(x))> 0$ for a.e. $x\in\Omega$ such that $u(x)\neq0$ and $H(x,u(x))= 0$ for a.e. $x\in\Omega$ such that $u(x) = 0$.
\end{remark}

The section is organized as follows. In Section \ref{sec-2.1} we collect some notation, preliminaries, and known results which are useful for the variational formulation of problem \eqref{Plm}. In Section \ref{sec-2.2}, we state and prove our nonexistence, existence and multiplicity results. Finally, in Section \ref{sec-2.3} we analyze the asymptotic behaviour of the solutions found as the parameter $\mu \to +\infty$.

\subsection{Preliminaries}\label{sec-2.1}

Throughout this subsection, only assumption $(h_1)$ is required for $h$. 
Following the approach of \cite{BeJeMa14}, in this paper we restrict our analysis to the so-called \emph{strong strictly spacelike solutions}, cf. \cite{BaSi82} for this terminology. Hence, we refer to a solution of \eqref{Plm} in the sense specified below.  
\begin{definition}
A measurable function $u : \Omega \to \mathbb R$ is a \emph{solution} of \eqref{Plm} if $u\in W^{2,q}(\Omega)$ for some $q>N$, $\|\nabla u\|_{L^\infty(\Omega)} < 1$, it satisfies the equation of \eqref{Plm} a.e. in $\Omega$, and it vanishes on $\partial\Omega$.
\end{definition}
We notice that, since $q>N$, by the Sobolev embeddings, every solution $u$ of \eqref{Plm} is of class $C^1(\overline\Omega)$ and satisfies the boundary condition  $u|_{\partial\Omega}=0$ in the classical sense. 
In the variational formulation of problem \eqref{Plm}, the following closed convex subset of $W^{1,\infty}(\Omega)$ plays a crucial role: 
$$
K_0:=\left\{u\in W^{1,\infty}(\Omega)\,:\, \|\nabla u\|_{L^\infty(\Omega)}\le 1,\, u=0\text{ on $\partial\Omega$}\right\},
$$
where the boundary condition $u=0$ on $\partial\Omega$ holds pointwise, in view of the regularity of $\partial\Omega$ and the Sobolev embedding $W^{1,\infty}(\Omega)\hookrightarrow C^{0,1}(\overline{\Omega})=\mathrm{Lip}(\Omega)$.
Note, in particular, that functions in $K_0$ have Lipschitz constant less than or equal to 1. 
In the next proposition, we recall some known properties of the set $K_0$ that will be useful in the rest of the paper. 
\begin{proposition}[\cite{CoObOmRi13b,BeJeMa14}]\label{prop:propK0}
The following holds true.
\begin{itemize}
    \item[$(i)$]  If $u\in K_0$, then $|u(x)|\le \mathrm{dist}(x,\partial \Omega)$ for every $x\in\Omega$; as a consequence $\|u\|_{L^\infty(\Omega)}\le \frac{1}{2}\mathrm{diam}(\Omega)$. 
    \item[$(ii)$] $K_0$ is compactly embedded in $C(\overline{\Omega})$.
\end{itemize}    
\end{proposition}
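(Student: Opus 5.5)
For $(i)$, fix $u\in K_0$ and $x\in\Omega$, and pick $y\in\partial\Omega$ with $|x-y|=\mathrm{dist}(x,\partial\Omega)$; such a point exists because $\partial\Omega$ is compact. The plan is to show that $u$ is $1$-Lipschitz along the segment $[x,y]$ and then use $u(y)=0$. The open segment from $x$ to $y$ lies in $\Omega$: any point $z$ on it satisfies $|z-x|<|x-y|=\mathrm{dist}(x,\partial\Omega)$, so $z$ belongs to a ball around $x$ contained in $\Omega$. On that ball, which is convex, a $W^{1,\infty}$ function with $\|\nabla u\|_{L^\infty}\le 1$ is $1$-Lipschitz. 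This follows by mollification: $u_\varepsilon=u*\rho_\varepsilon$ is smooth with $|\nabla u_\varepsilon|\le1$ on slightly shrunk balls, so $|u_\varepsilon(z)-u_\varepsilon(w)|\le|z-w|$ there, and one passes to the limit using the continuity of $u$. We therefore get $|u(x)-u(z)|\le|x-z|$ for every $z$ on the open segment. Letting $z\to y$ and using the continuity of $u$ up to $\overline\Omega$ (it is Lipschitz), together with $u(y)=0$, gives $|u(x)|\le|x-y|=\mathrm{dist}(x,\partial\Omega)$.

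For the $L^\infty$ bound, it suffices to show $\mathrm{dist}(x,\partial\Omega)\le\frac12\mathrm{diam}(\Omega)$. Set $d=\mathrm{dist}(x,\partial\Omega)$, so that the ball $B_d(x)$ is contained in $\Omega$. Its diameter $2d$ is then at most $\mathrm{diam}(\Omega)$, which gives the claim.

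For $(ii)$, we apply Arzelà--Ascoli in $C(\overline\Omega)$. By the Sobolev embedding recalled in the paper, and using the regularity of $\partial\Omega$, elements of $K_0$ extend to Lipschitz functions on $\overline\Omega$. A cleaner route is the following. Extend each $u\in K_0$ by zero outside $\Omega$. The extension $\tilde u$ is $1$-Lipschitz on all of $\mathbb R^N$: for $z,w$, if the segment $[z,w]$ stays in $\Omega$, apply the local argument along it. Otherwise, let $z',w'$ be the first and last points of $[z,w]$ on $\partial\Omega$, and estimate $|\tilde u(z)-\tilde u(w)|\le|u(z)|+|u(w)|\le|z-z'|+|w-w'|\le|z-w|$, using $(i)$ and the fact that $\tilde u$ vanishes outside $\Omega$. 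This shows the family $K_0$ is equi-Lipschitz, hence equicontinuous. It is uniformly bounded by $(i)$. Arzelà--Ascoli then gives relative compactness in $C(\overline\Omega)$.

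To obtain compactness and not just relative compactness, we must also check that $K_0$ is closed under uniform convergence. If $u_n\to u$ uniformly, then $u$ is $1$-Lipschitz, vanishes on $\partial\Omega$, and satisfies $|\nabla u|\le1$ a.e. by Rademacher's theorem, so $u\in K_0$.

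The main obstacle is the Lipschitz-versus-gradient-bound issue on a possibly nonconvex $\Omega$. The argument handles this by working only on segments that stay inside $\Omega$, and by the zero-extension combined with $(i)$. The subtle step is the first case of that argument (a segment contained in $\Omega$ but not inside a single ball), which I would handle by covering the compact segment with finitely many balls contained in $\Omega$ and chaining the local $1$-Lipschitz estimates.
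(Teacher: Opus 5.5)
Your proof is correct and follows the same route as the paper. In $(i)$ you bound $|u(x)|$ by $|x-y|$ for a boundary point $y$, using the gradient bound and $u(y)=0$, and in $(ii)$ you combine equi-Lipschitz continuity with Arzel\`a--Ascoli. Your version is actually more careful than the paper's, which simply asserts that functions in $K_0$ are $1$-Lipschitz on $\overline\Omega$ (for nonconvex $\Omega$ this holds only thanks to the zero boundary datum) and does not check that $K_0$ is closed under uniform convergence; your inscribed-ball argument, the zero extension and the Rademacher step supply exactly these details.
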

\begin{proof}
    (i) If $u\in K_0$, then, from the fact that $u$ is $1$-Lipschitz and that $u|_{\partial \Omega} = 0$, we have 
       $$
|u(x)-u(y)|
\le
|x-y|\quad\text{for every $x,\,y\in\overline{\Omega}$.}
$$
Taking $y\in\partial\Omega$, $u(y)=0$, and so 
$$
|u(x)|\le |x-y|.
$$
Passing to the infimum over $y\in\partial\Omega$
yields $|u(x)|\le \mathrm{dist}(x,\partial \Omega)$ for every $x\in\Omega$. Finally, by simple geometric arguments, $\mathrm{dist}(x,\partial \Omega)\le \frac{1}{2}\mathrm{diam}(\Omega)$ for every $x\in \Omega$.   

Property (ii) is a consequence of the equi-Lipschitz continuity of $K_0$-functions and the Arzelà-Ascoli Theorem, cf. \cite[Lemma 2.2]{BeJeMa14}.
\end{proof}
\begin{remark}
    As a consequence of part (i) of the previous proposition, assumption $(h_1)$ can be weakened in the following form 
\begin{itemize}
\item[$(h'_1)$] $h$ is a Carath\'eodory function for which there exists $\alpha \in L^\infty(\Omega)$ such that
$$
\ |h(x,s)| \le \alpha(x)
\quad
\text{for a.e. } x\in\Omega\text{ and for all } |s|\le \frac{1}{2}\mathrm{diam}(\Omega).
$$
\end{itemize}
\end{remark}
 From Proposition \ref{prop:propK0}, it becomes clear that working in the convex set \(K_0\) offers a twofold advantage: it ensures both compactness and \(L^\infty\) a priori bounds. On the other hand, the operator \(\mathcal{M}\) is not defined for functions \(u \in K_0\) satisfying \(\|\nabla u\|_{L^\infty(\Omega)} = 1\). As already shown in \cite{BeJeMa14}, a Szulkin-type framework proves to be a suitable tool for addressing problem \eqref{Plm} from a variational perspective. Therefore, for $\lambda, \mu \in \R$, we introduce the nonsmooth functional 
$I_{\lambda,\mu} : C(\overline{\Omega}) \to \mathbb R$, defined as
\begin{equation}
\label{funcional}
I_{\lambda,\mu}(u) := \Psi(u) + \Phi_{\lambda,\mu}(u),%\quad\text{for all }u\in C(\overline{\Omega})
\end{equation}
where 
$$\Psi(u) := 
\begin{cases}
        \displaystyle \int_\Omega \left( 1- \sqrt{1-|\nabla u|^2} \right) dx \quad &\text{if } u \in K_0, \smallskip \\
        \displaystyle 
        + \infty &\text{if } u \in C(\overline{\Omega}) \setminus K_0,
\end{cases}
    $$ 
and 
$$\Phi_{\lambda,\mu}(u) := - \frac{\lambda}{2} \int_\Omega u^2 dx - \mu \int_\Omega H(x,u) dx,$$
for all $u\in C(\overline{\Omega})$. 
The functional $I_{\lambda,\mu}$  has the structure required by Szulkin's critical point theory \cite{Sz86}, more precisely: 
\begin{equation*} 
\begin{aligned}  
&I_{\lambda,\mu} = \Psi + \Phi_{\lambda,\mu} \text{ is defined over a real Banach space (namely $(C(\overline{\Omega}),\|\cdot\|_{L^\infty(\Omega)})$)},\\   
&\Psi : C(\overline{\Omega}) \to (-\infty, +\infty] \text{ is convex, proper (i.e., $\Psi \not\equiv +\infty$), and lower semicontinuous},\\
&\Phi_{\lambda,\mu} \in C^1(C(\overline{\Omega});\mathbb{R}), 
\end{aligned} 
\end{equation*} 
see, for instance, {\cite[Lemma 2.4]{BeJeMa14}} for the lower semicontinuity and convexity of $\Psi$.
We now recall the definitions of critical point and Palais-Smale condition for such a nonsmooth functional, in the sense of Szulkin \cite{Sz86}.
\begin{definition}\label{def:crit-pt}
A function $u \in K_0$ is called a \emph{critical point} of $I_{\lambda,\mu}$ if it is a solution of the variational inequality
$$
\Psi(v) - \Psi(u) + \Phi'_{\lambda,\mu}(u)[v-u]  \geq 0 \, \text{ for all } v \in K_0,
$$
namely, for all $v \in K_0$
$$
\int_\Omega \big( 1- \sqrt{1-|\nabla v|^2} \big)dx - \int_\Omega \big( 1- \sqrt{1-|\nabla u|^2} \big)dx - \lambda \int_\Omega u (v-u) dx - \mu \int_\Omega h(x,u)(v-u)dx \geq 0. 
$$
\end{definition}
\begin{definition}\label{def:PS}
$I_{\lambda,\mu}$ satisfies the \emph{Palais-Smale condition} (shortened $(PS)$-condition) if any sequence $\{u_n\} \subset K_0$ with $I_{\lambda,\mu}(u_n)\to c\in\mathbb R$ satisfying 
$$
\Psi(v) - \Psi(u_n) + \Phi'_{\lambda,\mu}(u_n)[v-u_n] \geq -\varepsilon_n\|v-u_n\|_{L^\infty(\Omega)} \, \text{ for all } v \in K_0, 
$$
where $\varepsilon_n \to 0^+$, has a convergent subsequence. 
\end{definition}
As a consequence of Proposition \ref{prop:propK0}-(ii) (cf. also \cite{BeJeMa14}), $I_{\lambda,\mu}$ satisfies the $(PS)$-condition. 

Note that, in principle, a critical point \(u\) of \(I_{\lambda,\mu}\) may satisfy \(\|\nabla u\|_{L^\infty(\Omega)} = 1\), and therefore fail to be a solution of problem \eqref{Plm}. However, combining the regularity estimates in \cite{BaSi82} with a convexity argument, it is shown in \cite{BeJeMa14} that this cannot happen. Precisely, the following result holds. Note that here the sole assumption $(h_1)$ is required, as mentioned before.

\begin{theorem}[{\cite[Theorem 2.1]{BeJeMa14}}]
\label{theo21}
Let $\lambda, \mu \in \R$ and let $h$ satisfy $(h_1)$. A function $u \in C(\overline{\Omega})$ is a critical point of $I_{\lambda,\mu}$ if and only if it is a solution of problem \eqref{Plm}. Moreover, $I_{\lambda,\mu}$ is bounded from below and attains its infimum at some $u_0 \in K_0$, which is a critical point of $I_{\lambda,\mu}$ and, hence, a solution of \eqref{Plm}.
\end{theorem}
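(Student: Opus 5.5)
The statement has two parts: the equivalence between critical points and solutions, and the existence of a minimizer. I would prove the easy implications first and isolate the hard one.

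\textbf{Minimizer.} By Proposition \ref{prop:propK0}(i), every $u\in K_0$ satisfies $\|u\|_{L^\infty(\Omega)}\le \frac12\mathrm{diam}(\Omega)$. With $(h_1)$ (in the form $(h_1')$), this gives
$$|\Phi_{\lambda,\mu}(u)|\le \tfrac{|\lambda|}{2}|\Omega|\,\mathrm{diam}(\Omega)^2+|\mu|\,\|\alpha\|_{L^\infty(\Omega)}\,\mathrm{diam}(\Omega)\,|\Omega|,$$
and $\Psi\ge 0$, so $I_{\lambda,\mu}$ is bounded from below. Take a minimizing sequence $u_n\in K_0$. By Proposition \ref{prop:propK0}(ii), a subsequence converges uniformly to some $u_0$. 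The class $K_0$ is closed under uniform convergence, since $1$-Lipschitz functions vanishing on $\partial\Omega$ stay so in the limit, and a weak-$*$ limit of the gradients exists. Hence $u_0\in K_0$. Lower semicontinuity of $\Psi$ and continuity of $\Phi_{\lambda,\mu}$ in $C(\overline\Omega)$ give $I_{\lambda,\mu}(u_0)=\inf I_{\lambda,\mu}$. A global minimizer is a Szulkin critical point: for $v\in K_0$ and $t\in(0,1]$, convexity gives
$$\Psi(u_0+t(v-u_0))\le \Psi(u_0)+t(\Psi(v)-\Psi(u_0)).$$
Dividing $I_{\lambda,\mu}(u_0+t(v-u_0))-I_{\lambda,\mu}(u_0)\ge0$ by $t$ and letting $t\to0^+$ yields the variational inequality of Definition \ref{def:crit-pt}.

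\textbf{Solution $\Rightarrow$ critical point.} Let $u$ be a solution. It lies in $C^1(\overline\Omega)$ with $\|\nabla u\|_\infty<1$, so $u\in K_0$. Multiply the equation by $v-u$ with $v\in K_0$ and integrate by parts; this is legitimate because $u\in W^{2,q}$ and $v-u$ is Lipschitz and vanishes on $\partial\Omega$. This gives
$$\int_\Omega \frac{\nabla u\cdot\nabla(v-u)}{\sqrt{1-|\nabla u|^2}}\,dx=\int_\Omega(\lambda u+\mu h(x,u))(v-u)\,dx.$$
Pointwise convexity of $\xi\mapsto 1-\sqrt{1-|\xi|^2}$ on the closed unit ball, which is differentiable at $\nabla u$, gives $\Psi(v)-\Psi(u)\ge$ the left-hand side. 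This is exactly the variational inequality.

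\textbf{Critical point $\Rightarrow$ solution.} This is the main obstacle, because a critical point might touch $|\nabla u|=1$. Set $f:=\lambda u+\mu h(x,u)\in L^\infty(\Omega)$ and freeze it. Then $u$ minimizes over $K_0$ the convex functional
$$J(v)=\Psi(v)-\int_\Omega f v\,dx.$$
Indeed, the variational inequality for $u$ is exactly the first-order minimality condition for $J$ on the convex set $K_0$, and $J$ is convex. Now invoke the Bartnik--Simon theory \cite{BaSi82}: for bounded right-hand side $f$, the Dirichlet problem $-\mathcal M(w)=f$, $w=0$ on $\partial\Omega$, has a strictly spacelike solution $w\in W^{2,q}(\Omega)$ for all $q<\infty$, with $\|\nabla w\|_\infty<1$. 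By the previous step, $w$ is also a minimizer of $J$. Finally I would show that minimizers of $J$ are unique. The integrand $1-\sqrt{1-|\xi|^2}$ is strictly convex on the closed unit ball, so two minimizers must have equal gradients a.e., and equal boundary values then force them to coincide. Thus $u=w$ is a solution.

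The delicate points are obtaining the $W^{2,q}$ regularity and the gradient bound $\|\nabla w\|_\infty<1$ from \cite{BaSi82}, possibly via an approximation of $f$ by smooth data together with uniform gradient estimates. That is where I would focus.
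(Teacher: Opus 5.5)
Your proposal is correct and takes the same route as the proof the paper cites from \cite{BeJeMa14}, which the paper itself only summarizes as ``regularity estimates in \cite{BaSi82} combined with a convexity argument''. You freeze $f=\lambda u+\mu h(x,u)\in L^\infty(\Omega)$, observe that the variational inequality says exactly that $u$ minimizes $\Psi(v)-\int_\Omega fv\,dx$ on $K_0$, use Bartnik--Simon to obtain the strictly spacelike $W^{2,q}$ solution $w$ of $-\mathcal M(w)=f$, and conclude $u=w$ from strict convexity of $\xi\mapsto 1-\sqrt{1-|\xi|^2}$ on the closed unit ball. The converse direction and the existence of a minimizer that is a Szulkin critical point are also handled correctly.
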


Finally, we report here the abstract version of the Linking Theorem for nonsmooth functionals that we will use in the next section to prove the existence of new solutions to \eqref{Plm}. In the following statement, the notions of critical point and $(PS)$-condition are meant analogously as in Definitions \ref{def:crit-pt} and \ref{def:PS}.
\begin{theorem}[{\cite[Theorem 3.4]{Sz86}}, {\cite[Theorem 5.4.29]{PaRa19}}]
\label{linking}
Let $(X,\|\cdot\|)$ be a real Banach space and $I : X \to (-\infty,+\infty]$ be the sum of a convex, proper, lower semicontinuous term and of a term of class $C^1$. 
Let $X = V \oplus W$, where $\dim V < \infty$ and $W$ is closed, and suppose that:
\begin{enumerate}
    \item[$(i)$] $I$ satisfies the $(PS)$-condition;
    \item[$(ii)$] there are constants $\alpha, \rho > 0$ such that 
    $$
        I|_{\partial B_\rho \cap W} \ge \alpha;
    $$
\item[$(iii)$] there is a constant $\mathcal{R}  > 0$ and a function $e \in W$, with $\|e\| > \rho$, such that $I|_{\partial Q} < \alpha$  where $\partial Q$ denotes the boundary of the set
$$Q := (B_{\mathcal{R}} \cap V) \oplus \{ te : 0 \le t \le 1 \}. 
$$
\end{enumerate}
Then $I$ has a critical value $c \ge \alpha$ which may be characterized by
$$
    c = \inf_{f \in \Gamma} \; \sup_{x \in Q} I\bigl(f(x)\bigr),
$$
where $\Gamma := \{\, f \in C(Q;X) : f|_{\partial Q} = \mathrm{id}_{\partial Q} \,\}$.
\end{theorem}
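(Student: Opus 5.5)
The plan is to follow the classical min-max scheme: first an intersection (linking) property that gives $c\ge\alpha$, then a deformation argument, adapted to functionals of the form $I=\Psi+\Phi$ with $\Psi$ convex, proper and lower semicontinuous, showing that $c$ is a critical value.

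\emph{Step 1: linking.} Let $P:X\to V$ be the projection associated with $X=V\oplus W$. It is continuous because $\dim V<\infty$ and $W$ is closed. For $f\in\Gamma$ I want to show that $f(Q)\cap(\partial B_\rho\cap W)\neq\emptyset$. Identify $Q$ with a subset of the finite-dimensional space $E:=V\oplus\mathbb{R}e$, and define $g:Q\to E$ by
\[
g(x):=Pf(x)+\|(\mathrm{id}-P)f(x)\|\,\frac{e}{\|e\|}.
\]
On $\partial Q$ we have $f=\mathrm{id}$, so $g(x)=x$ there; this uses that $e\in W$, so that for $x=v+te$ one gets $(\mathrm{id}-P)x=te$. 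The point $y_0:=\rho\, e/\|e\|$ lies in the interior of $Q$, since $0<\rho<\|e\|$, and it does not lie on $g(\partial Q)$. Hence, by homotopy invariance, $\deg(g,\mathrm{int}\,Q,y_0)=\deg(\mathrm{id},\mathrm{int}\,Q,y_0)=1$, and there is $x$ with $g(x)=y_0$. This means $Pf(x)=0$ and $\|f(x)\|=\rho$, that is, $f(x)\in\partial B_\rho\cap W$. By $(ii)$ we get $\sup_Q I\circ f\ge\alpha$ for every $f\in\Gamma$, so $c\ge\alpha$. I will also check that $c<+\infty$, i.e. that some $f\in\Gamma$ has $\sup_Q I\circ f<\infty$. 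In the application this holds because $I$ is finite on $Q\subset K_0$; in the abstract setting it is implicitly part of the hypotheses.

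\emph{Step 2: deformation.} Assume by contradiction that $c$ is not a critical value. By the $(PS)$-condition there is $\varepsilon_0>0$ and $\sigma>0$ such that every $u$ with $|I(u)-c|\le\varepsilon_0$ violates the approximate variational inequality with constant $\sigma$. That is, there exists $v_u$ with
\[
\Psi(v_u)-\Psi(u)+\Phi'(u)[v_u-u]<-\sigma\|v_u-u\|.
\]
From this I would build Szulkin's deformation. The choices $v_u$ are glued by a locally finite partition of unity subordinate to open neighbourhoods on which the strict inequality persists; the $C^1$ character of $\Phi$ and the lower semicontinuity of $\Psi$ are what make the inequality stable under small perturbations. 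This yields a continuous map $\eta:[0,1]\times X\to X$ that moves points along convex combinations $u+s(v-u)$. Convexity of $\Psi$ gives $\Psi(u+s(v-u))\le\Psi(u)+s(\Psi(v)-\Psi(u))$, so that $I(\eta(1,u))\le c-\varepsilon$ whenever $I(u)\le c+\varepsilon$, for some $0<\varepsilon<\varepsilon_0$.

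We choose $\varepsilon<\alpha-\max_{\partial Q}I$, which is possible because $\partial Q$ is compact and $I|_{\partial Q}<\alpha$ on it. With this choice $\eta(1,\cdot)$ can be taken to be the identity on $\{I\le c-2\varepsilon\}\supset\partial Q$. Then, taking $f\in\Gamma$ with $\sup_Q I\circ f\le c+\varepsilon$, the map $\eta(1,\cdot)\circ f$ belongs to $\Gamma$ and satisfies $\sup_Q I\le c-\varepsilon$, contradicting the definition of $c$.

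\emph{Main obstacle.} I expect the difficulty to lie entirely in Step 2, namely in the nonsmooth deformation lemma. Since $\Psi$ may take the value $+\infty$ and is only lower semicontinuous, one cannot use a pseudo-gradient flow. The deformation must instead be constructed by hand from the variational inequality, and one must check that it keeps points in $\mathrm{dom}\,\Psi$ and that it is continuous. For this I would follow Szulkin's original construction \cite{Sz86}, which combines the convexity estimate above with a discrete-step, partition-of-unity argument.
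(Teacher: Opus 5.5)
The paper does not prove this statement; it quotes it from \cite{Sz86} and \cite{PaRa19}, so your argument has to stand on its own. Your Step~1 is the standard intersection argument and is correct. Note also that $c<+\infty$ is not an extra hypothesis. Indeed, $\Psi<\alpha-\Phi$ is bounded above on the compact set $\partial Q$, and every point of $Q$ lies on a segment with endpoints in $\partial Q$. So by convexity $\sup_Q I<+\infty$ already for $f=\mathrm{id}_Q$.

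The gap is in Step~2, where you use two properties of continuous functionals that $I=\Psi+\Phi$ lacks, being only lower semicontinuous. First, compactness of $\partial Q$ gives a minimum of $I$ there, not a maximum. So $\sup_{\partial Q}I<\alpha$ does not follow from the pointwise inequality, and you must read (iii) in this uniform form. This is what the paper actually verifies in its application, where $I\le 0<\alpha$ on $\partial Q$.

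Second, and this is the real issue, your claim about the deformation does not follow from the construction you describe. You want $\eta(1,\cdot)$ to be the identity on $\{I\le c-2\varepsilon\}$ while pushing $f(Q)\cap\{I\le c+\varepsilon\}$ below $c-\varepsilon$. In the smooth proof this comes from a cutoff equal to $0$ on $\{I\le c-2\varepsilon\}$ and to $1$ on $\{I\ge c-\varepsilon\}$, which requires both sets to be closed and disjoint. Here the second set is not closed. Points of $f(Q)$ where $I$ is close to $c$ may accumulate at points of $\partial Q$ where $I\le c-2\varepsilon$. In the paper's setting, for instance, sawtooth functions $s_n\in K_0$ with $|\nabla s_n|=1$ a.e. converge uniformly to $0\in\partial Q$, while $\Psi(s_n)=|\Omega|$ does not tend to $\Psi(0)=0$. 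So you cannot switch the deformation off near $\partial Q$ without losing the bound $\sup_Q I\circ\eta(1,f(\cdot))\le c-\varepsilon$. Moreover, the partition-of-unity and convex-combination construction yields uniform descent only on a compact set such as $f(Q)$, not a map on $[0,1]\times X$.

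The missing idea is to let the deformation move $\partial Q$ and then restore the boundary condition using convexity once more. Suppose $\eta$, defined on $f(Q)$, maps $f(Q)$ into $\{I\le c-\varepsilon\}$, moves points by at most $\kappa$, and raises $I$ by at most $\gamma$, with $\kappa,\gamma\to0$ as $\varepsilon\to0$. Put $g:=\eta\circ f$ and run $g$ on a slightly shrunk copy of $Q$, shrunk radially from an interior point. On the remaining thin collar, use the segment from $g(b)$ to $b$, for $b\in\partial Q$. Along these segments, convexity bounds $\Psi$ by the corresponding convex combination of $\Psi(b)$ and $\Psi(g(b))$. Meanwhile $\Phi$ varies little, by its uniform continuity near the compact set $\partial Q$. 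Hence $I\le\sup_{\partial Q}I+\gamma+o(1)$ on the collar, and this is $<c-\varepsilon$ for small $\varepsilon$, because $\sup_{\partial Q}I<\alpha\le c$. The new map lies in $\Gamma$ and satisfies $\sup_Q I<c$, which is the contradiction you need.
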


%%%%%%%%%%%%%%%%%%%%%%%%%%%%%%%%%%%%%%%%%%%%%%%%%%%%
\subsection{Nonexistence, existence, and multiplicity of solutions}\label{sec-2.2}

Throughout the section, $\lambda_1< \lambda_2 \le \lambda_3 \le\dots \le \lambda_i \le \dots$ denote the eigenvalues of $-\Delta$ under homogeneous Dirichlet boundary conditions in $\Omega$.

As a first result of this section, we provide a nonexistence criterion. 
\begin{theorem}
   \label{thm:nonexist}
    Let $\lambda\in \mathbb R$, $\mu\ge 0$, and let $h$ satisfy $(h_1)$ and, for some $\xi >0$, 
    \begin{equation} \label{eq:consh3} |h(x,s)| \le |s| \quad\text{for a.e. $x\in\Omega$ and for all $|s|\le \xi$.}\end{equation} 
    Let furthermore $K:=\frac{1}{2}\mathrm{diam}(\Omega)$ and $\alpha_K$ be the $L^\infty$-function given in $(h_1)$ in correspondence of $\varrho = K$. If 
    \begin{equation}
    \label{eq:neccond}        
    \lambda + \mu \left(1+ \frac{\|\alpha_K\|_{L^\infty(\Omega)}}{\xi}\right) \le \lambda_1,
    \end{equation}
then problem \eqref{Plm} admits only the trivial solution $u\equiv 0$. 
\end{theorem}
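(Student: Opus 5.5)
Let $u$ be a solution of \eqref{Plm}. We test the equation against $u$ and compare with the Poincaré inequality for $\lambda_1$. Since $u\in W^{2,q}(\Omega)\cap C^1(\overline\Omega)$ with $\|\nabla u\|_{L^\infty}<1$ and $u=0$ on $\partial\Omega$, we multiply the equation by $u$ and integrate by parts:
$$
\int_\Omega \frac{|\nabla u|^2}{\sqrt{1-|\nabla u|^2}}\,dx=\lambda\int_\Omega u^2\,dx+\mu\int_\Omega h(x,u)u\,dx .
$$
The left-hand side is bounded below by $\int_\Omega|\nabla u|^2\,dx\ge\lambda_1\int_\Omega u^2\,dx$. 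This uses $\frac{1}{\sqrt{1-t^2}}\ge1$, and the inequality is strict unless $\nabla u\equiv0$.

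The main step is a pointwise linear bound $h(x,s)s\le C s^2$ for $|s|\le K$, with the constant matching \eqref{eq:neccond}. By Proposition \ref{prop:propK0}(i), $\|u\|_{L^\infty}\le K$ because solutions lie in $K_0$. We split into two ranges of $s$:
\begin{itemize}
\item For $|s|\le\xi$, assumption \eqref{eq:consh3} gives $|h(x,s)s|\le s^2$.
\item For $\xi<|s|\le K$, assumption $(h_1)$ gives $|h(x,s)s|\le \|\alpha_K\|_\infty|s|\le \frac{\|\alpha_K\|_\infty}{\xi}s^2$, since $|s|/\xi>1$.
\end{itemize}
In both cases
$$
|h(x,s)s|\le \Bigl(1+\frac{\|\alpha_K\|_{L^\infty(\Omega)}}{\xi}\Bigr)s^2 .
$$
Here we use the sum rather than the maximum of the two constants, which is harmless and matches the statement. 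If $\xi\ge K$, only the first range occurs and the bound still holds. Since $\mu\ge0$, we obtain
$$
\lambda_1\int_\Omega u^2\le\int_\Omega \frac{|\nabla u|^2}{\sqrt{1-|\nabla u|^2}}\le\Bigl(\lambda+\mu\bigl(1+\tfrac{\|\alpha_K\|_\infty}{\xi}\bigr)\Bigr)\int_\Omega u^2\le\lambda_1\int_\Omega u^2 .
$$

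The one delicate point is that \eqref{eq:neccond} allows equality, so we must extract strictness from somewhere. All inequalities in the chain are equalities, so in particular $\int_\Omega |\nabla u|^2\bigl(\frac{1}{\sqrt{1-|\nabla u|^2}}-1\bigr)dx=0$. The integrand is nonnegative and vanishes only where $\nabla u=0$, so $\nabla u=0$ a.e. Together with $u=0$ on $\partial\Omega$ (and $u\in C^1$), this forces $u\equiv0$. This strict convexity gain of the Minkowski operator over the Laplacian is what handles the borderline case; the rest is routine.
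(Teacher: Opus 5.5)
Your proof is correct and follows essentially the paper's route. The paper obtains the same inequality $\int_\Omega |\nabla u|^2(1-|\nabla u|^2)^{-1/2}\,dx \le \lambda\int_\Omega u^2\,dx+\mu\int_\Omega h(x,u)u\,dx$ by testing the variational inequality with $v=tu$ and letting $t\to1^-$ via the power series of $1-\sqrt{1-s^2}$, rather than by integrating the strong equation by parts, and then uses the identical splitting at $|u|=\xi$ and Poincaré. The borderline case is also handled the same way: the paper gets strictness from the quartic term $\frac12\int_\Omega|\nabla u|^4\,dx$, which is just the leading part of your nonnegative integrand $|\nabla u|^2\bigl((1-|\nabla u|^2)^{-1/2}-1\bigr)$.
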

\begin{proof}
We take inspiration from the last part of the proof of Theorem 1.1 of \cite{Mi21} and recall the following expansion in power series:
\begin{equation}
    \label{eq:series}
1-\sqrt{1-s^2} = \frac{1}{2}s^2 + \sum_{n=2}^\infty a_n s^{2n}\quad\text{for all } s \in [-1,1],
\end{equation}
where $a_n := (2n-3)!!/(2^n n!) > 0$ for every integer $n\ge 2$. Let $u \in K_0$ be a solution of \eqref{Plm}, then $tu \in K_0$ for every $t\in (-1,1)$. Testing the variational inequality in Definition \ref{def:crit-pt} with $v = tu$, $t\in (-1,1)$, and using \eqref{eq:series}, we get for every $t\in (-1,1)$
$$
\frac{1}{2}(t^2-1)\int_\Omega |\nabla u|^2 dx + \sum_{n=2}^\infty a_n (t^{2n}-1)\int_\Omega |\nabla u|^{2n} dx- (t-1) \left[\lambda\int_\Omega u^2 dx+ \mu \int_\Omega h(x,u)u\,dx\right] \ge 0.
$$
Thus, dividing by $t-1(< 0)$ and letting $t\to 1^-$, we obtain
$$
\int_\Omega |\nabla u|^2dx + \sum_{n=2}^\infty a_n  2n \int_\Omega |\nabla u|^{2n} dx\le  \lambda\int_\Omega u^2 dx + \mu \int_\Omega h(x,u)u \,dx.
$$
Hence, by \eqref{eq:consh3}, Proposition \ref{prop:propK0}-(i), assumption $(h_1)$, and the variational characterization of $\lambda_1$ we have
$$
\begin{aligned}
    \int_\Omega |\nabla u|^2dx &+ \sum_{n=2}^\infty a_n  2n \int_\Omega |\nabla u|^{2n}dx \\
    &\le  \lambda \int_\Omega u^2dx + \mu \left(\int_{\Omega\cap \{|u|\le \xi\}} h(x,u)u\,dx + \int_{\Omega\cap \{|u|> \xi\}} h(x,u)u\,dx\right)\\
    &\le \lambda \int_\Omega u^2\, dx + \mu \left(\int_\Omega u^2\,dx + \int_{\Omega\cap \{|u|> \xi\}} \frac{\alpha_K(x)}{\xi}u^2\,dx\right)\\
    &\le \lambda \int_\Omega u^2\,dx + \mu \left(1 + \frac{\|\alpha_K\|_{L^\infty(\Omega)}}{\xi}\right)\int_\Omega u^2\,dx\\
    &\le \left[\lambda + \mu \left(1 + \frac{\|\alpha_K\|_{L^\infty(\Omega)}}{\xi}\right)\right]\frac{1}{\lambda_1}\int_\Omega |\nabla u|^2\,dx,
\end{aligned}
$$
where we have denoted $\{|u|\le \xi\}:=\{x\in\Omega\,:\,|u(x)|\le \xi\}$ and similarly for $\{|u|> \xi\}$.
Therefore, since $a_2=1/8$ and using that $a_n > 0$ for all $n\ge 2$, by \eqref{eq:neccond} we get 
$$
\begin{aligned}
\frac{1}{2}\int_\Omega |\nabla u|^4\,dx &\le \sum_{n=2}^\infty a_n  2n \int_\Omega |\nabla u|^{2n} \,dx\le  \left\{\left[\lambda + \mu \left(1 + \frac{\|\alpha_K\|_{L^\infty(\Omega)}}{\xi}\right)\right]\frac{1}{\lambda_1}-1\right\}\int_\Omega |\nabla u|^2 \,dx\le 0.
\end{aligned}
$$
Since $K_0 \hookrightarrow W^{1,4}_0(\Omega)$, this implies that $u\equiv 0$ and concludes the proof.
\end{proof}

\begin{remark} For the prototype nonlinearity $h(x,s)=|s|^{p-2}s$, with $p>2$, we have $\xi=1$ in \eqref{eq:consh3} and $\alpha_K= K^{p-1}$ in $(h_1)$, so that \eqref{eq:neccond} specifies into 
$$\lambda + \mu \left(1+ \left(\frac{1}{2}\mathrm{diam}(\Omega)\right)^{p-1}\right) \le \lambda_1.
$$
\end{remark}

In contrast with Theorem \ref{thm:nonexist}, the next result shows that, when $\mu$ is large enough, \eqref{Plm} always admits a nontrivial solution provided by global minimization (i.e., a ground-state solution). Incidentally, it is worth observing that for this theorem, assumption $(h_3)$ is not needed.

\begin{theorem}
\label{thm:ground}
Let $\lambda \in \mathbb{R}$ and let $h$ satisfy $(h_1)$ and $(h_2)$. Then, there exists $\widetilde{\mu}=\widetilde{\mu}(\lambda) \in \mathbb{R}$ such that for every $\mu>\widetilde{\mu}$ problem \eqref{Plm} admits a nontrivial solution $u^{(l)}_{\lambda,\mu}$ such that
\begin{equation}
    \label{eq:infI}
I_{\lambda,\mu}(u^{(l)}_{\lambda,\mu}) = \inf_{u\in C(\overline{\Omega})} I_{\lambda,\mu}(u) < 0.
\end{equation}
If $h$ is odd with respect to $u$, then $u^{(l)}_{\lambda,\mu}$ can be chosen to be one-signed (and thus both $u^{(l)}_{\lambda,\mu}$ and $-u^{(l)}_{\lambda,\mu}$ are solutions). 
\end{theorem}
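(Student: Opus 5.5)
By Theorem \ref{theo21}, for every $\lambda,\mu$ the functional $I_{\lambda,\mu}$ is bounded from below on $C(\overline\Omega)$ and attains its infimum at some $u_0\in K_0$. This minimizer is a critical point in the sense of Definition \ref{def:crit-pt}, hence a solution of \eqref{Plm}. Since $I_{\lambda,\mu}(0)=0$, it suffices to find, for $\mu$ large, a single $v\in K_0$ with $I_{\lambda,\mu}(v)<0$. Then $\inf I_{\lambda,\mu}<0$, the minimizer is nontrivial, and \eqref{eq:infI} holds with $u^{(l)}_{\lambda,\mu}:=u_0$.

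To construct $v$, fix a nonzero function $\varphi\in C^1_c(\Omega)$ with $\varphi\ge0$, and rescale it as $v:=\varepsilon\varphi$ with $\varepsilon>0$ small enough that $\|\nabla v\|_{L^\infty}\le 1$ and $\|v\|_{L^\infty}\le\delta$. Then $v\in K_0$. By Remark \ref{rem-h2}, $H(x,v(x))>0$ on the set $\{v\neq0\}$, which has positive measure, so $A:=\int_\Omega H(x,v)\,dx>0$. Since $\Psi(v)\le|\Omega|$ and $\int_\Omega v^2$ is finite, we get
$$I_{\lambda,\mu}(v)=\Psi(v)-\frac{\lambda}{2}\int_\Omega v^2\,dx-\mu A<0$$
as soon as $\mu>\widetilde\mu(\lambda):=\bigl(\Psi(v)-\frac{\lambda}{2}\int_\Omega v^2\,dx\bigr)/A$. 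The function $v$ does not depend on $\lambda$, so $\widetilde\mu$ depends on $\lambda$ only through this explicit formula.

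For the odd case, take any minimizer $u_0$ and set $w:=|u_0|$. Then $w\in K_0$, because $|\nabla|u_0||=|\nabla u_0|$ a.e. and $w$ still vanishes on $\partial\Omega$. Since $h$ is odd in $s$, $H$ is even in $s$, so $H(x,|u_0|)=H(x,u_0)$. The terms $\Psi$ and $\int_\Omega u^2$ are also unchanged under $u\mapsto|u|$. Hence $I_{\lambda,\mu}(w)=I_{\lambda,\mu}(u_0)$, so $w$ is again a global minimizer, nontrivial and nonnegative. By Theorem \ref{theo21} it is a critical point, hence a solution of \eqref{Plm}. Oddness of $h$ then gives that $-w$ is also a solution.

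No step here is a real obstacle. The only points to check carefully are the admissibility of the test function in $K_0$ (which the scaling by $\varepsilon$ handles) and the positivity of $A$, which is exactly where $(h_2)$ enters through Remark \ref{rem-h2}. The statement only asks for one-signed, so nonnegativity of $w$ is enough; strict positivity would need a strong maximum principle argument and can be left out.
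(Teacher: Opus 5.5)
Your proposal is correct and follows essentially the same route as the paper: a small bump in $K_0$ with values below $\delta$ gives $\widetilde\mu(\lambda)=\bigl(\Psi(v)-\frac{\lambda}{2}\int_\Omega v^2\,dx\bigr)/\int_\Omega H(x,v)\,dx$, exactly as in the paper, and the odd case is handled by replacing the minimizer with $\pm|u_0|$. One small point: Theorem \ref{theo21} as stated only says that \emph{some} minimizer is critical, so to conclude that the new minimizer $|u_0|$ is critical you should note that \emph{every} global minimizer $u$ is a critical point; this follows because $t(\Psi(v)-\Psi(u))+\Phi_{\lambda,\mu}(u+t(v-u))-\Phi_{\lambda,\mu}(u)\ge0$ for all $v\in K_0$ and $t\in(0,1)$ (by minimality and convexity of $\Psi$), after which you divide by $t$ and let $t\to0^+$, a step the paper also uses implicitly.
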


\begin{proof} By Theorem \ref{theo21}, we know that the infimum in \eqref{eq:infI} is achieved at some $u^{(l)}_{\lambda,\mu}\in K_0$ which is a critical point of $I_{\lambda,\mu}$ and so a solution of \eqref{Plm}. It remains to prove that $u^{(l)}_{\lambda,\mu}\not\equiv 0$ when $\mu > \widetilde\mu(\lambda)$ exhibited below.  

First, arguing as in the proof of \cite[Theorem 3.1]{BeJeMa14}, let $x_0 \in \Omega$, $\delta>0$ be as in $(h_2)$, $0<r_0<\delta$ be such that $\overline{B}_{r_0}(x_0) \subset \Omega$, and let 
    $$\eta(x) :=
\begin{cases} \exp\left( \frac{r_0^2}{|x-x_0|^2 - r_{0}^2} \right) &\text{if } x \in B_{r_0}(x_0),\\
0 & \text{if } x \in \Omega \setminus B_{r_0}(x_0).
\end{cases}$$
With this, we define $$ \eta_0 (x) := \min\{ r_0, \|\nabla \eta \|^{-1}_{L^\infty(\Omega)}\}\eta(x).$$
One has, by construction, that $\eta_0 \in K_0$ and $0 \leq \eta_0 < \delta$, which together with Remark \ref{rem-h2} yields
$$\int_\Omega H(x,\eta_0)\, dx >0.$$
Then, choosing
\begin{equation*}
\label{mu1*}
\widetilde\mu := \frac{\Psi(\eta_0) - \frac{\lambda}{2} \int_{\Omega} \eta_0^2\,dx}{\int_{\Omega} H(x, \eta_0)\,dx},
\end{equation*}
and noticing that, for all $\mu > \widetilde \mu$ one has $I_{\lambda,\mu}(\eta_0) < I_{\lambda,\widetilde\mu}(\eta_0) = 0$, the conclusion is immediate since $I_{\lambda,\mu}$ is bounded from below, cf. Theorem \ref{theo21}, and $I_{\lambda,\mu}(0)=0$. 

Finally, as for the last part of the statement, it is enough to observe that, if $h(x,\cdot)$ is odd, then $I_{\lambda,\mu}(u)=I_{\lambda,\mu}(|u|)=I_{\lambda,\mu}(-|u|)$ for all $u\in C(\overline{\Omega})$, this allows one to replace $u$ with $\pm|u|$ in the variational problem in \eqref{eq:infI} to get a one-signed solution. 
\end{proof}

\begin{remark}
When $h$ is not odd in $u$, one can apply the previous result to both $\tilde{h}(x,u) = \textnormal{sgn}(u)h(x,|u|)$ and $\tilde{h}(x,u) = - \textnormal{sgn}(u)h(x,-|u|)$, so as to obtain a positive and a negative solution with a negative energy level.
\end{remark}

\begin{remark}
We notice that, at least when $(h_2)$ holds with $\delta \ge \tfrac{1}{2}\mathrm{diam}(\Omega)$, we can easily prove that, if $\lambda > \lambda_1$,  problem \eqref{Plm} admits a nontrivial solution $u^{(l)}_{\lambda,\mu}$  for every $\mu\ge 0$, that is, we can take $\mu^\star(\lambda) < 0$. Indeed, in this case, it is sufficient to recall that in \cite[Theorem 1.1]{Mi21} it is proved that, for any $\lambda > \lambda_1$, the eigenvalue problem
$$\begin{cases}
-\mathcal{M}(u) = \lambda u &\text{ in }\Omega,\\
u = 0 &\text{ on }\partial\Omega
\end{cases}
$$
has a solution $u_{\lambda,0} \geq 0$ such that $I_{\lambda,0}(u_{\lambda,0}) < 0$. Since $\mu \geq 0$ and $H(x,u_{\lambda,0}) \ge 0$ by Remark \ref{rem-h2} and Proposition \ref{prop:propK0}-(i), it follows that $I_{\lambda,\mu}(u_{\lambda,0}) = I_{\lambda,0}(u_{\lambda,0}) - \mu \int_\Omega H(x,u_{\lambda,0})dx <0$. Then we can conclude as in the proof of the previous theorem.
\end{remark}

 We now present our main result of this section, which shows that, when $\lambda \in \R$ does not belong to the spectrum of the Dirichlet Laplacian (when $\lambda$ is an eigenvalue, a further assumption on $h$ is required, see Remark \ref{rem-resonant}), and $\mu$ is large enough, a solution with positive energy level can be provided via a min-max argument.

\begin{theorem}
\label{theoexistence}
Let $\lambda\in\mathbb{R}$ lie outside the spectrum of the Dirichlet Laplacian, and let $h$ satisfy $(h_1)$, $(h_2)$, and $(h_3)$. Then, there exists  $\bar{\mu}=\bar{\mu}(\lambda)\ge0$ such that for every $\mu > \bar{\mu}$ problem \eqref{Plm} admits a non-trivial solution $u^{(s)}_{\lambda,\mu}$ such that
$$
I_{\lambda,\mu}(u^{(s)}_{\lambda,\mu}) > 0.
$$
\end{theorem}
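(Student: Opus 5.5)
We plan to apply the nonsmooth Linking Theorem~\ref{linking} in $X=C(\overline\Omega)$ with the sup norm. When $\lambda<\lambda_1$ we take $V=\{0\}$, which reduces the argument to a mountain pass. When $\lambda_k<\lambda<\lambda_{k+1}$ we take $V=\mathrm{span}\{\varphi_1,\dots,\varphi_k\}$, the span of the first Dirichlet eigenfunctions; these are smooth, hence lie in $C(\overline\Omega)$. As $W$ we take the closed complement
\[
W=\Big\{u\in C(\overline\Omega): \int_\Omega u\varphi_i\,dx=0,\ i=1,\dots,k\Big\}.
\]
The $(PS)$-condition holds by Proposition~\ref{prop:propK0}-(ii). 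Once a critical value $c\ge\alpha>0$ is found, Theorem~\ref{theo21} turns the critical point into a solution $u^{(s)}_{\lambda,\mu}$ with $I_{\lambda,\mu}(u^{(s)}_{\lambda,\mu})>0$; since $I_{\lambda,\mu}(0)=0$, it is nontrivial.

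\textbf{Step 1: the lower bound on $\partial B_\rho\cap W$.} Points outside $K_0$ carry the value $+\infty$, so only $u\in K_0\cap W$ with $\|u\|_\infty=\rho$ need checking. By \eqref{eq:series}, $\Psi(u)\ge\frac12\int_\Omega|\nabla u|^2dx+\frac18\int_\Omega|\nabla u|^4dx$. On $W\cap H^1_0$ we have $\int_\Omega|\nabla u|^2dx\ge\lambda_{k+1}\int_\Omega u^2dx$. By $(h_3)$, for any $\varepsilon>0$ we get $|H(x,s)|\le\varepsilon s^2$ when $|s|\le\rho$ and $\rho$ is small. Combining these,
\[
I_{\lambda,\mu}(u)\ge \tfrac12\Big(1-\tfrac{\lambda+2\mu\varepsilon}{\lambda_{k+1}}\Big)\int_\Omega|\nabla u|^2dx+\tfrac18\int_\Omega|\nabla u|^4dx .
\]
Choosing $\varepsilon$ (and therefore $\rho=\rho(\mu)$) small, the bracket is bounded below by a positive constant. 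The remaining difficulty is that this estimate is in $H^1$ norms while the sphere is in the sup norm. For $1$-Lipschitz functions, however, $\|u\|_\infty=\rho$ forces $|u|\ge\rho/2$ on a ball of radius $\rho/2$, so $\int_\Omega u^2dx\ge c\rho^{N+2}$, and hence $\int_\Omega|\nabla u|^2dx\ge\lambda_1 c\rho^{N+2}$. This yields $\alpha=\alpha(\rho)>0$. When $\lambda<\lambda_1$ the same estimate works with $\lambda_1$ in place of $\lambda_{k+1}$.

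\textbf{Step 2: the upper bound on $\partial Q$.} We take $e=s\eta_0$ with a bump $\eta_0\in K_0$ as in the proof of Theorem~\ref{thm:ground}. To keep $e$ in $W$ we subtract its projection onto $V$, or we choose the support of the bump on a small ball and use a smooth correction lying in $V$. Here $\mathcal R$ may be taken at most $\tfrac12\mathrm{diam}(\Omega)$, since outside $K_0$ the functional is $+\infty$, which is not below $\alpha$. This forces us to understand $\partial Q$ differently: points of $\partial Q$ outside $K_0$ would violate $(iii)$. We therefore intend to take $V$-balls and $e$ small enough that $Q\subset K_0$, or to modify $\Gamma$ accordingly. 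On $\partial Q$ there are three faces to control.
\begin{itemize}
\item On $V\cap B_{\mathcal R}$: for $v\in V$ we have $\Psi(v)\le\frac12\int_\Omega|\nabla v|^2dx+C\int_\Omega|\nabla v|^4dx$ and $\int_\Omega|\nabla v|^2dx\le\lambda_k\int_\Omega v^2dx<\lambda\int_\Omega v^2dx$. Since $\mu H\ge0$ for small values by $(h_2)$, the quadratic part is negative. The quartic remainder is dominated near $0$, and elsewhere it is dominated by the $-\mu\int_\Omega H\,dx$ term for $\mu$ large.
\item On the faces $\{v+te\}$ with $\|v\|=\mathcal R$ or $t=1$: the key idea is that $I_{\lambda,\mu}\to-\infty$ uniformly as $\mu\to\infty$ on a compact, finite-dimensional set of $K_0$-functions with $\int_\Omega H(x,u)\,dx$ bounded below positively, while $\alpha$ can be kept fixed.
\end{itemize}
To make this work we fix $\rho$ with $\mu$ only entering through $\varepsilon$. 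Concretely, we restrict to $\mu$ in a range and note that $\alpha$ depends on $\mu$ only via $\rho$. The scheme is: choose the finite-dimensional set $Q$ in $K_0$ with $\max_{\partial Q}\Psi\le C$, apply $\inf_{\partial Q\setminus\{0\}}\int_\Omega H>0$ where appropriate, and take $\mu>\bar\mu(\lambda)$.

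\textbf{Main obstacle.} We expect the main difficulty to be the compatibility of the two steps. The sphere radius $\rho$ depends on $\mu$ through $(h_3)$, and points of $\partial Q$ near $0$ (on $V$) have $H$ small, so the estimate must rely on the quadratic sign from $\lambda>\lambda_k$ together with $(h_2)$ and not on $\mu$. We would first try to handle this by proving, for $v\in V\cap K_0$, the inequality $\Psi(v)-\frac\lambda2\int_\Omega v^2dx<0$ near $0$, and by using $\mu$ large only on the part of $\partial Q$ bounded away from $0$. The non-resonance of $\lambda$ is exactly what gives strict signs on both $V$ and $W$.
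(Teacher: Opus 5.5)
Your proposal is correct and is essentially the paper's proof: the same linking in $C(\overline\Omega)$ with $V=\mathrm{span}\{\varphi_1,\dots,\varphi_k\}$ and $W$ its $L^2$-orthogonal complement, a $\mu$-dependent small sphere in $W$ handled by $(h_3)$ and $\lambda<\lambda_{k+1}$, and a $\mu$-independent small $Q\subset K_0$ on whose $V$-face non-resonance gives $I_{\lambda,\mu}\le 0$, while on the other faces large $\mu$ wins thanks to a compactness lower bound on $\int_\Omega H(x,u)\,dx$ (your Lipschitz bound $\int_\Omega u^2\,dx\ge c\rho^{N+2}$ and your Taylor-remainder argument are harmless variants of the paper's compactness step and of its inequality $1-\sqrt{1-s^2}\le\frac{1+\varepsilon}{2}s^2$). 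Two small corrections: for $\lambda<0$ just drop the nonnegative term $-\frac{\lambda}{2}\int_\Omega u^2\,dx$ in Step 1, since estimating it through $\lambda_{k+1}$ goes the wrong way; and the $\mu$-dependence of $\rho$ and $\alpha$ is no obstacle, because $I_{\lambda,\mu}\le 0<\alpha$ on the fixed set $\partial Q$ and one only needs $\rho(\mu)<\|e\|_{L^\infty(\Omega)}$.
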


\begin{proof} 
Set by convention $\lambda_0=-\infty$ and $\varphi_0\equiv 0$ and let $\varphi_1, \varphi_2, \varphi_3,\dots,\varphi_i,\dots$ be an $L^2$-orthonormal basis of eigenfunctions corresponding to the eigenvalues $0<\lambda_1 < \lambda_2 \le \lambda_3 \le  \dots \le \lambda_i\le \dots$ of $-\Delta$ under homogeneous Dirichlet boundary conditions in $\Omega$. 
Note that 
\begin{equation}
    \label{eq:ONbasis-prop}
\varphi_i\in H^1_0(\Omega),\quad\int_\Omega \varphi_i^2 dx=1,\quad \text{and}\quad\int_\Omega |\nabla \varphi_i|^2 dx=\lambda_i\quad\text{for every $i\in\mathbb N$.}
\end{equation}
Let $k=0$ if $\lambda<\lambda_1$ and let $k\ge 1$ be the integer such that $\lambda\in (\lambda_k,\lambda_{k+1})\neq\emptyset$ if $\lambda >\lambda_1$.
Following the notation in Theorem~\ref{linking}, we set $X:=C(\overline \Omega)$,
\[
V:=\operatorname{span}\{\varphi_1,\dots,\varphi_k\},
\quad \text{and}\quad
W := \left\{u\in X : \int_\Omega u\varphi_i\,dx=0, \, i= 1,\dots, k\right\}.
\]
In view of \eqref{eq:ONbasis-prop}, for every $v\in V$ we have that $v\in H^1_0(\Omega)$ and 
\begin{equation}
    \label{eq:vinV}
v=\sum_{i=1}^{k}a_i \varphi_i, \quad \int_\Omega v^2 dx = \sum_{i=1}^k a_i^2,\quad \int_\Omega |\nabla v|^2 dx = \sum_{i=1}^k a_i^2\lambda_i,
\end{equation}
for suitable real numbers $a_i$. 
Similarly, for every $w\in W\cap K_0$,
\[w=\sum_{i=k+1}^{\infty}a_i \varphi_i, \quad \int_\Omega w^2 dx = \sum_{i=k+1}^\infty a_i^2,\quad \int_\Omega |\nabla w|^2 dx = \sum_{i=k+1}^\infty a_i^2\lambda_i,\]
where $a_i$ are suitable real numbers. Moreover, as a consequence of the $L^2$-orthogonality of the basis $\{\varphi_i\}$, for every $v\in V$ and $w\in W \cap K_0$ it holds
\begin{equation}
    \label{eq:orthog}
    \int_\Omega v\,w \,dx=0 \quad\text{and}\quad \int_\Omega \nabla v\,\nabla w \,dx=0.
\end{equation}
Furthermore, since $V$ is finite dimensional, all norms are equivalent therein, and so \[\|\nabla v\|_{L^\infty(\Omega)}\le C \|v\|_{L^\infty(\Omega)}\] for some positive $C$ independent of $v\in V$.  
Note that, if $k=0$, $V=\{0\}$ and $W=X$ (namely, the linking geometry reduces to the mountain pass one).

Now, we verify that the assumptions in Theorem \ref{linking} are satisfied for the functional $I_{\lambda,\mu}$. As already mentioned, assumption (i) of Theorem \ref{linking}, that is, the Palais-Smale condition, is satisfied as a consequence of Proposition \ref{prop:propK0}-(ii). 

As for condition (iii), we will prove that there exist $\mathcal{R}>0$ and $e \in W \cap K_0\setminus\{0\}$, such that
\begin{equation} \label{condition2}
I_{\lambda,\mu}(u) \le 0 \, \text{ for all } u \in \partial Q, \, \, Q:= \{ u = v+te \,:\, v \in V, \,\|v\|_{L^\infty(\Omega)} < \mathcal{R}, \, t\in [0,1] \}.\end{equation} 
Notice that, if $k=0$, $\partial Q = \{0\}\cup\{e\}$ and $I_{\lambda,\mu}(0)=0<\alpha$. Taking $e$ such that $\|e\|_{L^\infty(\Omega)}\in (0,\delta/2)$, we have by Remark \ref{rem-h2} that $\int_{\Omega} H(x,e)\, dx>0$ and so 
\begin{equation*}
I_{\lambda,\mu}(e) = \int_{\Omega} (1-\sqrt{1-|\nabla e|^2})\, dx - \frac{\lambda}{2} \int_{\Omega} e^2\, dx  - \mu \int_{\Omega} H(x,e)\, dx\le 0
\end{equation*}
for every $\mu \ge \bar\mu$, with  
        \[
        \bar\mu = \bar\mu(\lambda) := \max\left\{0,\frac{\displaystyle{\int_{\Omega} (1-\sqrt{1-|\nabla e|^2})\, dx - \frac{\lambda}{2} \int_{\Omega} e^2\, dx}}{\displaystyle{\int_{\Omega} H(x,e)\, dx}}\right\}.
        \] 
If $k\ge1$, let 
\[
\begin{gathered}
\varepsilon=\varepsilon(\lambda):=\frac{\lambda}{\lambda_k}-1,\quad s_{\varepsilon}=s_{\varepsilon}(\lambda)=:=\frac{2\sqrt{\varepsilon}}{1+\varepsilon},\quad \mathcal R=\mathcal R(\lambda):=\min\left\{\frac{\delta}{2},\frac{1}{2C},\frac{s_\varepsilon}{C}\right\},
\\
e\in W\cap K_0\text{ such that $\|e\|_{L^\infty(\Omega)}\in\left(0, \frac\delta2\right)$ and $\|\nabla e\|_{L^\infty(\Omega)}\le \frac12$,}
\end{gathered}
\]
and
\[
\begin{aligned}\partial Q = & \ell_1\cup\ell_2\cup\ell_3\\&:=\{ v \, \in V\, : \, \|v\|_{L^{\infty}(\Omega)} < \mathcal{R} \}\cup \{v +  e \, : \,v \in V,\, \|v\|_{L^{\infty}(\Omega)} < \mathcal{R}\}\\ &\hspace{4cm}\cup \{ v+te \, : v \in V,\, \|v\|_{L^{\infty}(\Omega)} = \mathcal{R}, \, t\in [0,1]\}.
\end{aligned}
\]
Hence, for all $v$ such that $u=v+te\in \partial Q$, using that $\|v\|_{L^\infty(\Omega)} \le  \mathcal R < \delta$, we have 
\[
H(x,v(x))\ge 0\quad\text{for a.e. } x\in \Omega,
\] 
in view of Remark \ref{rem-h2}. Moreover, 
\[
\|\nabla v\|_{L^\infty(\Omega)}\le  C\mathcal R \le \min\left\{\frac12,s_\varepsilon\right\},
\] 
in particular, $v\in K_0$.
In order to prove \eqref{condition2}, let us begin with $\ell_1$. Since $v\in K_0$ and 
\[1-\sqrt{1-s^2}\le \frac{1+\varepsilon}{2}s^2\quad\text{for all }|s|\le s_\varepsilon,
\]
using \eqref{eq:vinV},  we have, for every $v\in\ell_1$,
$$
\begin{aligned}
I_{\lambda,\mu}(v) &\le \int_\Omega \big(1-\sqrt{1-|\nabla v|^2} \big) \, dx- \frac \lambda 2 \int_\Omega v^2 \,dx \\
&\leq \frac{1+\varepsilon}{2}\int_\Omega |\nabla v|^2 \, dx - \frac \lambda 2 \int_\Omega v^2 \,dx = \frac{1+\varepsilon}{2}\sum_{i=1}^k a_i^2\lambda_i  - \frac \lambda 2 \int_\Omega v^2 \,dx\\
&\le \frac{1}{2}((1+\varepsilon)\lambda_k-\lambda)\int_\Omega v^2 dx = 0 \qquad \text{for all } \mu > 0.
\end{aligned}
$$
We now consider the sides 
        \[\begin{gathered}\ell_2 = \{u = v +  e \,:\, v\in V,\, \|v\|_{L^{\infty}(\Omega)} < \mathcal{R}\}\quad\text{and}\\
        \ell_3=\{ u = v+te \,:\,  v\in V,\, \|v\|_{L^{\infty}(\Omega)} = \mathcal{R}, \, t\in [0,1]\}.
        \end{gathered}
        \] 
For every $u$ on these sides, 
        \[0<\|u\|_{L^\infty(\Omega)}\le\|v\|_{L^\infty(\Omega)}+t\|e\|_{L^\infty(\Omega)}\le \mathcal R + \|e\|_{L^\infty(\Omega)},\]
        where the first strict inequality follows from the fact that $V\cap W=\{0\}$. 
         Thus, using that $\mathcal R \le \delta/2$, we get from the estimate above,
        \begin{equation}
            \label{eq:0<u<delta}   0<\|u\|_{L^\infty(\Omega)} < \delta\quad\text{for every $u\in\ell_2\cup\ell_3$.}
        \end{equation}
        It follows from Remark \ref{rem-h2} that $\int_\Omega H(x,u) dx >0$ for every $u \in \ell_2 \cup \ell_3$. Moreover, by the choice of $e$, we also infer that, for any \(u\in\ell_2\cup\ell_3\) 
        \[\|\nabla u\|_{L^\infty(\Omega)}\le \|\nabla v\|_{L^\infty(\Omega)} + \|\nabla e\|_{L^\infty(\Omega)} \le 1,\] 
        that is $u\in K_0$ and consequently
        \[
        \Psi(u)= \int_\Omega\left(1-\sqrt{1-|\nabla u|^2}\right)dx\le |\Omega|\quad\text{for all }u\in \ell_2\cup\ell_3.\] This yields, 
        $$\begin{aligned}
            I_{\lambda,\mu}(u) &\leq |\Omega| - \frac \lambda 2 \int_\Omega u^2 \, dx - \mu \int_{\Omega} H(x,u) \, dx \\
            &< |\Omega| - \mu \int_{\Omega} H(x,u) \,dx\quad\text{for all }u\in \ell_2\cup\ell_3.
        \end{aligned}$$
        We now claim that 
\begin{equation}
    \label{eq:claim-infH}
\inf_{v+t e\,\in\, \ell_2\cup\ell_3}\int_{\Omega} H(x,v+te)\, dx > 0.
\end{equation}
Indeed, 
\[
\inf_{v+t e\,\in\, \ell_2\cup\ell_3}\int_{\Omega} H(x,v+te)\, dx = \min\left\{\inf_{v+t e\,\in\, \ell_2}\int_{\Omega} H(x,v+e)\, dx,\inf_{v+t e\,\in\, \ell_3}\int_{\Omega} H(x,v+te)\, dx\right\}.
\]
We can separately estimate the two infima over $\ell_2$ and over $\ell_3$. As for $\ell_2$, using the continuity of the functional $v\in V\mapsto \int_\Omega H(x,v) dx$ and that $V$ is finite dimensional, by the compactness of the closed ball $\{v\in V\,:\, \|v\|_{L^\infty(\Omega)}\le \mathcal R\}$ in $V$, we get 
\[
\begin{aligned}
\inf_{v+e\,\in\, \ell_2}\int_{\Omega} H(x,v+e) \, dx&
%= \inf_{v\in V,\,\|v\|_{L^\infty(\Omega)}<\mathcal R}\int_{\Omega} H(x,v+e)\\ 
\ge \inf_{v\in V,\,\|v\|_{L^\infty(\Omega)}\le\mathcal R}\int_{\Omega} H(x,v+e)\, dx \\
& = \min_{v\in V,\,\|v\|_{L^\infty(\Omega)}\le\mathcal R}\int_{\Omega} H(x,v+e)\, dx =: m_{\ell_2} >0,
\end{aligned}
\]
where the last inequality follows from the fact that $v+e\not\equiv 0$ and Remark \ref{rem-h2}.
Similar considerations over $\ell_3$ lead to 
\[
\begin{aligned}
\inf_{v+t e\,\in\, \ell_3}\int_{\Omega} H(x,v+te) \, dx&= \min_{v\in V,\,\|v\|_{L^\infty(\Omega)}=\mathcal R,\,t\in[0,1]}\int_{\Omega} H(x,v+te)\, dx =: m_{\ell_3} >0,
\end{aligned}
\]
as now $v\not\equiv 0$ in $V$ and $te\in W$. Therefore, the claim \eqref{eq:claim-infH} is proved. As a consequence, it is possible to define 
\begin{equation}
    \label{def:barmu}
\bar\mu=\bar\mu(\lambda):=\frac{|\Omega|}{\min\{m_{\ell_2},m_{\ell_3}\}},
\end{equation}
to get $I_{\lambda,\mu}(u)\le 0$ for every $u\in \partial Q$ and for every $\mu \ge \bar \mu$. 

This proves that condition (iii) of Theorem \ref{linking} is satisfied by $I_{\lambda,\mu}$ for $\mu$ sufficiently large. Notice that the dependence of $\bar\mu$  on $\lambda$ is hidden in the choice of $\mathcal R$, which determines $\ell_2$ and $\ell_3$.

As for condition (ii) of Theorem \ref{linking}, let $w \in W\cap K_0$ and treat all the cases $ k\ge 0$ simultaneously. By $(h_3)$ and in view of the embedding $L^\infty(\Omega)\hookrightarrow L^2(\Omega)$, for every $\mu>0$ there exists $r_\mu=r_\mu(\lambda)>0$ such that 
\[H(x,w(x))\le \frac{\lambda_{k+1}-\lambda}{4\mu}w(x)^2\quad \text{for a.e. $x\in\Omega$ and all $\|w\|_{L^\infty(\Omega)}\le r_\mu$}.\]
Using also 
\[
1-\sqrt{1-s^2} \geq \frac{1}{2} s^2,
\]
it follows that
        \begin{equation}
        \label{eq:(ii)}    
        \begin{aligned}
        I_{\lambda,\mu}(w) &\geq \frac{1}{2} \int_{\Omega} |\nabla w|^2\,dx - \frac{\lambda}{2} \int_{\Omega} w^2\,dx - \mu \int_\Omega H(x,w)\,dx\\ 
        &= \frac{1}{2} \sum_{i=k+1}^\infty (\lambda_i - \lambda)a_i^2 - \mu \int_\Omega H(x,w)\,dx\\
        &\geq \frac{\lambda_{k+1} - \lambda}{2} \|w\|_{L^2(\Omega)}^2 - \mu \frac{\lambda_{k+1} - \lambda}{4\mu}\|w\|_{L^2(\Omega)}^2=\frac{\lambda_{k+1} - \lambda}{4} \|w\|_{L^2(\Omega)}^2\quad\text{for all $\mu>0$.}
        \end{aligned}
        \end{equation}
        Let 
        \[\rho=\rho(\lambda,\mu):=\min\left\{r_\mu, \frac{\delta}{2},\|e\|_{L^\infty(\Omega)}\right\}.\]
        Since $\lambda < \lambda_{k+1}$ and $I_{\lambda,\mu}\equiv +\infty$ in $X\setminus K_0$, it remains to prove that 
        \begin{equation} 
        \label{infiii}
        \mathfrak{i}_{\rho}:=\inf_{\{w \in W\cap K_0\,:\, \|w\|_{L^{\infty}(\Omega)} = \rho\}} \int_{\Omega} w^2\,dx >0.
        \end{equation}
        Arguing by contradiction, suppose that there exists a sequence $\{w_n\} \subset W\cap K_0$  such that 
        $$ \|w_n\|_{L^{\infty}(\Omega)} = \rho,\, \quad\int_\Omega w_n^2\,dx \to 0.$$
      The sequence $\{w_n\}$ is bounded in $K_0$, therefore, by Proposition \ref{prop:propK0}, there exists a subsequence converging uniformly to some function $w \in C(\overline{\Omega})$. By the continuity of the $L^\infty$-norm, we have $\|w\|_{L^\infty(\Omega)} = \rho$ and, passing to the limit under the integral sign by uniform convergence, we have 
        $$\int_\Omega w_n^2\,dx \to \int_{\Omega} w^2\,dx.$$
        Hence $\int_\Omega w^2 \, dx = 0$, which implies $w \equiv 0$ in $\Omega$, contradicting $\|w\|_{L^\infty(\Omega)} = \rho>0$. This contradiction proves \eqref{infiii}. As a consequence, by \eqref{eq:(ii)} there exists 
    \[
\alpha=\alpha(\lambda,\mu):=\frac{\lambda_{k+1}-\lambda}{4}\mathfrak{i}_\rho>0
        \]
such that
$$
I_{\lambda,\mu}\big|_{\partial B_\rho \cap W} \ge \alpha.
$$
 In conclusion, by Theorem \ref{linking}, we get the existence of a critical point $u_{\lambda,\mu}$ of $I_{\lambda,\mu}$ at the critical value 
\[
c_{\lambda,\mu}:=\inf_{f\in \Gamma}\max_{u\in Q}I_{\lambda,\mu}(f(u))\ge \alpha_{\lambda} > 0,\quad\text{where }\Gamma:=\left\{f\in C(Q;X)\,:\,f|_{\partial Q}=\mathrm{id}_{\partial Q}\right\},
\] 
for every $\lambda\neq \lambda_k$, provided that $\mu>\bar\mu$.
\end{proof}

\begin{remark}\label{rem-resonant} If $\lambda = \lambda_k$ for some integer $k \geq 1$, the conclusion of Theorem \ref{theoexistence} is still true if, for some $M > 0$, we have
\begin{equation}
\label{ass-resonant}
 H(x,s) \ge M |s|^4, \quad \text{for a.e. } x\in \Omega \text{ and $\vert s \vert \leq \delta$. }
\end{equation}
Indeed, a careful checking of the proof shows that, keeping the notation used therein, the only point where the assumption $\lambda > \lambda_k$ is actually needed is in the verification that
$I_{\lambda,\mu}(u) \leq 0$ for every $u \in \ell_1$. 
If, however, condition \eqref{ass-resonant} is satisfied, this can be shown to be true by arguing as follows: first, exploiting the Taylor expansion of $s \mapsto 1-\sqrt{1-s^2}$ for $s\in [-1,1]$ we have that, for every $v \in V$ such that $\|v\|_{L^\infty (\Omega)} \leq \mathcal R$, with $\mathcal R:= \min \left\{\delta/2, 1/2C \right\} $,
$$\Psi(v) = \frac 12 \int_\Omega |\nabla v|^2 \,dx+ \frac{1}{8} \int_\Omega |\nabla v|^4 \, dx+  o\left(\|v\|_{L^\infty(\Omega)}^4\right).$$
Then, for every $v \in \ell_1$, writing $\int_\Omega v^2\,dx=\sum_{i=1}^k a_i^2$, we have
$$ 
\begin{aligned}
I_{\lambda_k,\mu}(v) &=\frac 12 \int_\Omega |\nabla v|^2 \, dx+ \frac{1}{8} \int_\Omega |\nabla v|^4 \, dx  - \frac{\lambda_k}2 \int_\Omega v^2 dx - \mu \int_\Omega H(x,v) \, dx  +o\left(\|v\|_{L^\infty(\Omega)}^4\right)\\
&\le \frac{1}{2}\sum_{i=1}^k  a_i^2 \lambda_i - \frac{\lambda_k}2 \int_\Omega v^2 dx + \frac{1}{8} \int_\Omega |\nabla v|^4 \, dx  - \mu M\int_\Omega v^4 \, dx +  o\left(\|v\|_{L^\infty(\Omega)}^4\right)\\
& \le \frac{1}{8} \int_\Omega |\nabla v|^4 \, dx  - \mu MC'\int_\Omega |\nabla v|^4 \, dx +  o\left(\|\nabla v\|_{L^4(\Omega)}^4\right)\le 0
\end{aligned}
$$
for all $\mu>\bar \mu:=1/8MC'$, where $C'$ is the constant arising from the equivalence of the norms $\|\cdot\|_{L^4(\Omega)}$ and $\|\nabla \cdot\|_{L^4(\Omega)}$ in the finite dimensional space $V$.

Notice that, for the prototype nonlinearity $h(x,s)=|s|^{p-2}s$, this result applies for all $p\in (2,4]$.  
\end{remark}

By combining Theorem \ref{thm:ground} with Theorem \ref{theoexistence}, we get the following multiplicity result. In particular, the choice $h(x,s)=|s|^{p-2}s$ (with $p > 2$) provides the multiplicity result stated in the Introduction. 

\begin{corollary}\label{cor:en-levels}
Let $\lambda \in \mathbb{R}$, and let $h$ satisfy $(h_1)$, $(h_2)$ and $(h_3)$. If $\lambda$ lies outside the spectrum of the Dirichlet Laplacian or if \eqref{ass-resonant} holds, then there exists $\mu^\star=\mu^\star(\lambda):=\max\{\widetilde{\mu},\bar\mu\} \in (0,\infty)$ such that for every $\mu>\mu^\star$ problem \eqref{Plm} admits two nontrivial solutions $u^{(s)}_{\lambda,\mu}$ and $u^{(l)}_{\lambda,\mu}$ such that
$$
I_{\lambda,\mu}(u^{(l)}_{\lambda,\mu}) < 0 < I_{\lambda,\mu}(u^{(s)}_{\lambda,\mu}) .
$$
If $h$ is odd with respect to $u$, the solution $u^{(l)}_{\lambda,\mu}$ can be chosen to be one-signed.
\end{corollary}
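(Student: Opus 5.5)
The corollary follows by combining Theorem \ref{thm:ground} and Theorem \ref{theoexistence} (or Remark \ref{rem-resonant} in the resonant case); the only work is to check that the thresholds combine correctly and that the two solutions are distinct.

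\textbf{Step 1 (the threshold).} Set $\mu^\star:=\max\{\widetilde\mu,\bar\mu\}$, with $\widetilde\mu=\widetilde\mu(\lambda)$ from Theorem \ref{thm:ground}. Take $\bar\mu=\bar\mu(\lambda)$ from Theorem \ref{theoexistence} when $\lambda$ is not a Dirichlet eigenvalue, and from Remark \ref{rem-resonant} when $\lambda=\lambda_k$ and \eqref{ass-resonant} holds. Both $\bar\mu$ and $\widetilde\mu$ are finite, so $\mu^\star<\infty$. Positivity of $\mu^\star$ comes from $\bar\mu$:
\begin{itemize}
\item for $k\ge1$, $\bar\mu=|\Omega|/\min\{m_{\ell_2},m_{\ell_3}\}>0$ by \eqref{def:barmu};
\item in the resonant case, $\bar\mu=1/(8MC')>0$;
\item for $k=0$, the quotient defining $\bar\mu$ is positive when $\lambda\le0$. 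If it were not positive, one could simply replace $\bar\mu$ by any larger positive number, since the requirement is only $\mu>\bar\mu$ and the conclusions persist for larger $\mu$.
\end{itemize}

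\textbf{Step 2 (the two solutions).} For $\mu>\mu^\star$ we have $\mu>\widetilde\mu$. Theorem \ref{thm:ground}, which uses only $(h_1)$ and $(h_2)$, then gives a nontrivial global minimizer $u^{(l)}_{\lambda,\mu}$ with $I_{\lambda,\mu}(u^{(l)}_{\lambda,\mu})<0$. We also have $\mu>\bar\mu$, so Theorem \ref{theoexistence}, or Remark \ref{rem-resonant} in the resonant case, gives a critical point $u^{(s)}_{\lambda,\mu}$ at the linking level $c_{\lambda,\mu}\ge\alpha>0$. By Theorem \ref{theo21}, both critical points are solutions of \eqref{Plm}.

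\textbf{Step 3 (nontriviality and distinctness).} $u^{(s)}_{\lambda,\mu}\not\equiv0$ because $I_{\lambda,\mu}(0)=0$. The two solutions are different because their energy levels have opposite signs.

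\textbf{Step 4 (one-signed minimizer).} The last assertion is the oddness part of Theorem \ref{thm:ground}. If $h$ is odd in $u$, then $I_{\lambda,\mu}(|u|)=I_{\lambda,\mu}(u)$, so the minimizer may be replaced by $|u^{(l)}_{\lambda,\mu}|$.

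There is no genuine obstacle here. The only care needed is bookkeeping: Theorem \ref{theoexistence} is stated for $\mu>\bar\mu$ while its proof yields condition (iii) for $\mu\ge\bar\mu$, and $\alpha$ depends on $\mu$. Neither point affects the argument, since everything is applied with a strict inequality $\mu>\mu^\star$.
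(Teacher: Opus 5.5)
Your proposal is correct and follows the paper, which obtains this corollary simply by combining Theorem \ref{thm:ground} with Theorem \ref{theoexistence} (or Remark \ref{rem-resonant} in the resonant case), with distinctness and nontriviality coming from the opposite signs of the energy levels. Your hedge in the case $k=0$ is unnecessary: the quotient defining $\bar\mu$ is positive for every $\lambda<\lambda_1$, because $\Psi(e)-\frac{\lambda}{2}\int_\Omega e^2\,dx\ge\frac{\lambda_1-\lambda}{2}\int_\Omega e^2\,dx>0$ by $1-\sqrt{1-s^2}\ge s^2/2$ and the Poincaré inequality.
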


\subsection{Asymptotic behaviour as $\mu \to +\infty$}\label{sec-2.3}
In this section, as we are interested in the limit of solutions as $\mu\to+\infty$, we will drop the subscript $\lambda$ and label the solutions, as well as the energy functional, only in terms of $\mu$.
For the next results, we introduce a stronger form of $(h_2)$, namely:
\begin{itemize}
    \item[$(h_2')$] $h(x,s)s > 0$ for a.e. $x\in \Omega$ and for all $s \neq 0$.
\end{itemize}
In view of Proposition \ref{prop:propK0}-(i), it is alternatively possible to require $(h_2)$ and $\tfrac{1}{2}\mathrm{diam}(\Omega)\le \delta$.

In the next proposition, we prove in particular that the min-max solutions tend uniformly to zero.
\begin{proposition}
    Assume that $(h_1)$ and $(h_2')$ hold. Let $\lambda \in \mathbb R$ be fixed, and $M>0$ be such that, for every $\mu\ge M$, $u_{\mu}$ is a solution of \eqref{Plm} and  
    \begin{equation}
        \label{eq:liminf}
    \liminf_{\mu\to +\infty}I_{\mu}(u_{\mu})>-\infty.
    \end{equation}
       Then, 
$$
u_{\mu}
\to 0
\quad \text{uniformly in } \overline{\Omega}
\quad \text{as } \mu\to+\infty.
$$
In particular, under the further assumption $(h_3)$ on the nonlinearity $h$ and when $\lambda$ is not an eigenvalue of the Dirichlet Laplacian, the above conclusion holds for any family of min-max solutions $\{u_{\mu}^{(s)}\}_{\mu\ge \bar\mu}$ as in Theorem \ref{theoexistence}. 
\end{proposition}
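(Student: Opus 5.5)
Since every $u_\mu$ lies in $K_0$, we have $0\le\Psi(u_\mu)\le|\Omega|$ and $\frac{|\lambda|}{2}\int_\Omega u_\mu^2\,dx\le \frac{|\lambda|}{2}K^2|\Omega|$, with $K=\tfrac12\mathrm{diam}(\Omega)$, by Proposition~\ref{prop:propK0}-(i). Hypothesis \eqref{eq:liminf} gives a constant $C_0$ and some $\mu_0\ge M$ with $I_\mu(u_\mu)\ge -C_0$ for all $\mu\ge\mu_0$. Writing out $I_\mu(u_\mu)=\Psi(u_\mu)-\frac{\lambda}{2}\int_\Omega u_\mu^2\,dx-\mu\int_\Omega H(x,u_\mu)\,dx$, we therefore obtain
\[
0\le \int_\Omega H(x,u_\mu)\,dx\le \frac{1}{\mu}\Big(C_0+|\Omega|+\tfrac{|\lambda|}{2}K^2|\Omega|\Big)\to 0 .
\]
The nonnegativity holds because, under $(h_2')$, $H(x,s)>0$ for all $s\neq0$, as in Remark~\ref{rem-h2}.

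We then argue by compactness. Suppose the conclusion fails. Then there are $\varepsilon>0$ and $\mu_n\to+\infty$ with $\|u_{\mu_n}\|_{L^\infty(\Omega)}\ge\varepsilon$. By Proposition~\ref{prop:propK0}-(ii), a subsequence converges uniformly to some $u\in K_0$, and $\|u\|_{L^\infty(\Omega)}\ge\varepsilon$. By $(h_1)$ with $\varrho=K$ and dominated convergence (the functions are uniformly bounded and $H$ is Carathéodory), $\int_\Omega H(x,u_{\mu_n})\,dx\to\int_\Omega H(x,u)\,dx$, so $\int_\Omega H(x,u)\,dx=0$. Since $H(x,u(x))\ge0$, this forces $H(x,u(x))=0$ a.e., hence $u=0$ a.e. by $(h_2')$. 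As $u$ is continuous, $u\equiv0$, which contradicts $\|u\|_{L^\infty(\Omega)}\ge\varepsilon$. Because every sequence $\mu_n\to+\infty$ therefore has a subsequence along which $u_{\mu_n}\to0$ uniformly, the whole family converges to zero uniformly.

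For the ``in particular'' part, it suffices to check that the min-max solutions of Theorem~\ref{theoexistence} satisfy \eqref{eq:liminf}. This is immediate, because $I_\mu(u^{(s)}_\mu)=c_{\lambda,\mu}\ge\alpha>0$. The hypothesis $(h_2')$ implies $(h_2)$ with any $\delta$, so Theorem~\ref{theoexistence} applies.

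The only point needing care is the passage from $\int_\Omega H(x,u)\,dx=0$ to $u\equiv0$. This is exactly where the global positivity $(h_2')$ is used rather than the local $(h_2)$: the limit $u$ may take values beyond $\delta$, where $(h_2)$ would say nothing about the sign of $H$. No quantitative estimate is required anywhere.
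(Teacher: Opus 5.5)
Your proof is correct and follows essentially the paper's argument: compactness of $K_0$ in $C(\overline\Omega)$ combined with the observation that a lower bound on $I_\mu(u_\mu)$ keeps $\mu\int_\Omega H(x,u_\mu)\,dx$ bounded, so any uniform limit satisfies $\int_\Omega H(x,u)\,dx=0$ and must vanish by $(h_2')$. The differences are only organizational. You derive the bound $\int_\Omega H(x,u_\mu)\,dx=O(1/\mu)$ directly instead of arguing by contradiction. You also make explicit two points the paper leaves implicit: the subsequence-of-subsequence step that upgrades convergence along a subsequence to convergence of the whole family, and the check that $c_{\lambda,\mu}\ge\alpha>0$ yields \eqref{eq:liminf} for the min-max solutions.
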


\begin{proof}
    By the compactness of the cone $K_0$ in $C(\overline{\Omega})$ (cf., Proposition \ref{prop:propK0}-(ii)), we know that there exists a function $u_\infty\in K_0$ such that, up to a subsequence, $u_{\mu} \to u_\infty$ in $C(\overline{\Omega})$ as $\mu \to +\infty$. Arguing by contradiction, let us assume that $u_\infty \not \equiv0$. Then, by $(h_1)$ and $(h'_2)$, 
$$
\lim_{\mu\to +\infty}\int_\Omega H(x,u_{\mu})\, dx = \int_\Omega H(x,u_{\infty})\, dx > 0.
$$
Therefore, since $\Psi(u_\mu)\le |\Omega|$ and $\{u_\mu\}_{\mu\geq M}$ is bounded in $L^2(\Omega)$ for every $\mu$ large enough, we get for some $C>0$ and for all $\mu$ large
    $$I_{\mu}(u_\mu)=\Psi(u_{\mu}) - \frac\lambda2 \int_{\Omega} u_{\mu}^2\, dx - \mu \int_{\Omega}H(x,u_{\mu})\, dx \leq C - \mu \int_{\Omega}H(x,u_{\mu})dx.$$
Hence, passing to the limit in $\mu$, we get
\[
\lim_{\mu\to +\infty}I_{\mu}(u_\mu) = -\infty,
\]
which contradicts \eqref{eq:liminf}.
\end{proof}

 Now, we present the asymptotic behaviour of the \emph{ground-state} solutions. 

\begin{proposition}
\label{thm:limit_gs}
Assume that $(h_1)$ holds. Let $\lambda \in \mathbb R$ be fixed and $M>0$ be such that, for every $\mu\ge M$, $u_{\mu}$ is a solution of  \eqref{Plm} satisfying
    \begin{equation}
        \label{eq:cond-gs}
    I_{\mu}(u_{\mu})=\inf_{u\in C(\overline{\Omega})}I_{\mu}(u).
    \end{equation}
  Then, up to a subsequence,
    \begin{equation}
        \label{eq:conv-gs}
u_{\mu}
\to u_\infty
\quad \text{uniformly in } \overline{\Omega}
\quad \text{as } \mu\to+\infty,
\end{equation}
where $u_\infty$ solves the following maximum problem
\begin{equation}
\label{maxG}
\max_{u\in K_0}
\int_\Omega H(x,u)\,dx.
\end{equation}
In particular, under the further assumption $(h_2)$ on the nonlinearity $h$, the above conclusion holds for any family of ground-state solutions $\{u_{\mu}^{(l)}\}_{\mu\ge \widetilde\mu}$ as in Theorem \ref{thm:ground}. 
\end{proposition}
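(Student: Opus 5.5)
The plan is to combine the compactness of $K_0$ in $C(\overline{\Omega})$ (Proposition \ref{prop:propK0}-(ii)) with a rescaling of the functional by $\mu$. Since every $u_\mu$ is a solution, it lies in $K_0$. We can therefore extract a sequence $\mu_n\to+\infty$ along which $u_{\mu_n}\to u_\infty$ uniformly, for some $u_\infty\in K_0$; uniform limits of $1$-Lipschitz functions vanishing on $\partial\Omega$ stay in $K_0$. It remains to show that $u_\infty$ maximizes $G(u):=\int_\Omega H(x,u)\,dx$ over $K_0$.

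First, $G$ is continuous on $K_0$ with respect to uniform convergence. By Proposition \ref{prop:propK0}-(i) all functions in $K_0$ satisfy $\|u\|_{L^\infty}\le K:=\frac12\mathrm{diam}(\Omega)$. By $(h_1)$, $H(x,\cdot)$ is then Lipschitz on $[-K,K]$ with constant $\|\alpha_K\|_{L^\infty}$. Hence $|G(u)-G(v)|\le \|\alpha_K\|_{L^\infty}|\Omega|\,\|u-v\|_{L^\infty}$. The same bound shows that the maximum in \eqref{maxG} is attained, again by compactness of $K_0$. The quadratic term is also uniformly bounded on $K_0$: $\left|\frac{\lambda}{2}\int_\Omega u^2\,dx\right|\le \frac{|\lambda|}{2}K^2|\Omega|=:C_1$. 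Moreover $0\le\Psi\le|\Omega|$ on $K_0$.

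The key step is to divide the minimality relation by $\mu$. Fix any $v\in K_0$. By \eqref{eq:cond-gs}, $I_\mu(u_\mu)\le I_\mu(v)$, that is,
\[
\Psi(u_\mu)-\tfrac{\lambda}{2}\int_\Omega u_\mu^2\,dx-\mu G(u_\mu)\le \Psi(v)-\tfrac{\lambda}{2}\int_\Omega v^2\,dx-\mu G(v).
\]
Using the bounds above, this gives
\[
G(v)\le G(u_\mu)+\frac{|\Omega|+2C_1}{\mu}.
\]
Along $\mu_n$, continuity of $G$ then yields $G(v)\le G(u_\infty)$ for every $v\in K_0$. So $u_\infty$ solves \eqref{maxG}, which proves \eqref{eq:conv-gs}.

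For the final claim, it suffices to check the hypotheses of the proposition for the family $\{u_\mu^{(l)}\}_{\mu\ge\widetilde\mu}$ given by Theorem \ref{thm:ground}. Under $(h_1)$ and $(h_2)$, for every $\mu>\widetilde\mu$ the function $u^{(l)}_\mu$ is a solution of \eqref{Plm} satisfying \eqref{eq:infI}, which is exactly \eqref{eq:cond-gs}; take any $M>\widetilde\mu$ and apply the result above. No step is a real obstacle. The only point needing care is that the estimates for $\Psi$, for the $\lambda$-term and for $G$ are uniform in $\mu$, and this is guaranteed by the a priori $L^\infty$ and gradient bounds built into $K_0$.
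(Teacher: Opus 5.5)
Your proof is correct and is essentially the paper's argument: compactness of $K_0$ in $C(\overline{\Omega})$ gives the uniform limit $u_\infty\in K_0$, and the comparison $I_\mu(u_\mu)\le I_\mu(v)$, combined with the $\mu$-independent bounds on $\Psi$ and on $\frac{\lambda}{2}\int_\Omega u^2\,dx$ over $K_0$, forces $u_\infty$ to maximize $\int_\Omega H(x,u)\,dx$. The only difference is cosmetic. You divide by $\mu$ and pass to the limit directly, whereas the paper assumes a competitor $u_*$ with larger $\int_\Omega H$ and derives $I_\mu(u_*)-I_\mu(u_\mu)\to-\infty$. For the final claim, take $M>\max\{\widetilde\mu,0\}$, since the statement requires $M>0$.
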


\begin{proof}
First of all, let us note that the maximum problem \eqref{maxG} is solvable by $(h_1)$ and by the compactness of $K_0$. Moreover, since $K_0$ is compact in
$C(\overline{\Omega})$, there exists $u_\infty \in K_0$
such that, up to a subsequence,
$$
u_\mu \to u_\infty
\quad \text{uniformly in } \overline{\Omega}.
$$
Assume by contradiction that $u_\infty$ does not solve \eqref{maxG}. Then there exists $u_*\in K_0$ such that 
\[
\int_\Omega H(x,u_*)dx > \int_\Omega H(x,u_\infty)dx. 
\]
Now, using that $u_\mu$ is characterized by \eqref{eq:cond-gs}, we have that
$$0
\le
I_{\mu}(u_*)
-
I_{\mu}(u_\mu)
=
\Psi(u_*)-\Psi(u_\mu)
-
\frac{\lambda}{2}\!\int_\Omega \bigl(u_*^2- u_\mu^2\bigr)\, dx
-
\mu\!\int_\Omega \bigl(H(x,u_*)-H(x,u_\mu)\bigr)\, dx.$$
Since $\varPsi$ is bounded in $K_0$, the first two terms are uniformly bounded in $\mu$, and similarly, the third by the uniform bound of $u\in K_0$.
On the other hand, by how we defined $u_*$, by $(h_1)$ and the uniform convergence of $\{u_\mu\}$, we have that
$$
\int_\Omega \bigl(H(x,u_*)-H(x,u_\mu)\bigr)\,dx
\to
\int_\Omega \big(H(x,u_*) - H(x,u_{\infty}) \big)\,dx > 0\quad\text{as $\mu\to+\infty$}.
$$
Altogether,
$$
I_{\mu}(u_*)
-
I_{\mu}(u_\mu)
=
O(1)
-
\mu
\int_\Omega \bigl(H(x,u_*)-H(x,u_\mu)\bigr)\, dx
\to -\infty \quad\text{as $\mu\to+\infty$,}
$$
which contradicts the minimality of $u_\mu$. Therefore, $u_\infty$ must solve \eqref{maxG}, concluding the proof.
\end{proof}

\begin{corollary}
Under the assumptions of Proposition \ref{thm:limit_gs}, if furthermore $(h_2')$ holds and $u_\mu\ge 0$ in $\Omega$ for every $\mu\ge M$, then $u_\infty = \operatorname{dist}(\cdot,\partial\Omega)$ and
\begin{equation}
\label{dist}
u_\mu \to \operatorname{dist}(\cdot,\partial\Omega) \quad\text{uniformly in $\overline{\Omega}$ as $\mu\to +\infty.$}
\end{equation}
\end{corollary}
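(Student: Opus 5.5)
By Proposition \ref{thm:limit_gs}, every subsequential uniform limit $u_\infty$ of $\{u_\mu\}$ belongs to $K_0$ and solves the maximum problem \eqref{maxG}. So I will show that, under $(h_2')$ and $u_\infty\ge 0$, the only possible maximizer is $d:=\operatorname{dist}(\cdot,\partial\Omega)$. Since every subsequence then has a further subsequence converging to the same limit, the whole family converges uniformly, which gives \eqref{dist}.

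First I check that $u_\infty\ge 0$. This holds because each $u_\mu\ge 0$ and uniform convergence preserves pointwise inequalities. Next I record that $d\in K_0$: it is $1$-Lipschitz, so $\|\nabla d\|_{L^\infty(\Omega)}\le 1$ by Rademacher's theorem, and it vanishes on $\partial\Omega$. By Proposition \ref{prop:propK0}-(i) we also have $0\le u_\infty\le d$ in $\Omega$.

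The key step is strict monotonicity of $H(x,\cdot)$ on $[0,\infty)$. By $(h_2')$, $h(x,\tau)>0$ for $\tau>0$, so for $0\le a<b$ we get $H(x,b)-H(x,a)=\int_a^b h(x,\tau)\,d\tau>0$ for a.e. $x$. Consequently
\[
\int_\Omega \bigl(H(x,d)-H(x,u_\infty)\bigr)\,dx\ge 0,
\]
and the integrand is strictly positive on the set $\{u_\infty<d\}$. Because $u_\infty$ is a maximizer and $d\in K_0$, the integral must vanish. Hence $\{u_\infty<d\}$ has measure zero.

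Both $u_\infty$ and $d$ are continuous, so the set $\{u_\infty<d\}$ is open. An open set of measure zero is empty, so $u_\infty\equiv d$. The only point requiring care is the a.e. strict positivity of $H(x,d)-H(x,u_\infty)$ on $\{u_\infty<d\}$. This follows by applying the monotonicity pointwise for a.e. $x$, since the exceptional null set in $(h_2')$ does not depend on $s$. I therefore expect no real obstacle.
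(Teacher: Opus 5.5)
Your proof is correct and follows essentially the paper's route: the bound $0\le u_\infty\le \operatorname{dist}(\cdot,\partial\Omega)$ from Proposition~\ref{prop:propK0}-(i), strict monotonicity of $H(x,\cdot)$ on $[0,\infty)$ from $(h_2')$, and maximality forcing equality a.e. You also make explicit two points the paper leaves implicit: the continuity argument that upgrades a.e.\ equality to equality everywhere, and the subsequence-of-subsequences step that gives convergence of the whole family rather than of a subsequence.
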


\begin{proof} Since $u_\mu\ge 0$ in $\Omega$ for every $\mu\ge M$, by pointwise convergence also $u_\infty\ge 0$ in $\Omega$.
By the variational characterization of $u_\infty$ established in 
Proposition~\ref{thm:limit_gs} (precisely, in \eqref{maxG}), it suffices to show that 
$\operatorname{dist}(\cdot,\partial\Omega)$ is the unique maximizer of 
$u \mapsto \int_\Omega H(x,u)\,dx$ over $\{u\in K_0\,:\, u\ge 0 \text{ in }\Omega\}$. We first observe that $\operatorname{dist}(\cdot,\partial\Omega)\ge 0$ belongs to $K_0$ as it is 
$1$-Lipschitz and vanishes on $\partial\Omega$. Therefore, we need to prove that
\begin{equation}
\label{ineq-H}
    \int_\Omega H(x,u)\,dx \leq \int_\Omega H(x,\operatorname{dist}(\cdot,\partial\Omega))\,dx
    \quad \text{for every } 0\le u \in K_0,
\end{equation}
and that, if $0\le \bar u\in K_0$ is a solution of the maximum problem \eqref{maxG},
then $\bar u=\mathrm{dist}(\cdot,\partial\Omega)$.

By the proof of Proposition \ref{prop:propK0}-(i), we know that 
\[
u(x) \leq \operatorname{dist}(x,\partial\Omega) \quad 
    \text{for every } 0\le u \in K_0\text{ and every }x \in \Omega.
\]
Moreover, $(h_2')$ implies that, for a.e. $x \in \Omega$, $H(x,\cdot)$ is positive and strictly increasing in $(0,\infty)$, hence \eqref{ineq-H} follows. As for uniqueness, let $0\le \bar u\in K_0$ be another solution of \eqref{maxG}, then 
\[\int_\Omega H(x,\bar u) \,dx \le \int_\Omega H(x,\mathrm{dist}(\cdot,\partial\Omega)) \,dx \le \int_\Omega H(x,\bar u) \,dx\]
and so $H(x,\bar u(x))=H(x,\mathrm{dist}(x,\partial\Omega))$ for a.e. $x\in\Omega$. Finally, since $H(x,\cdot)$ is strictly monotone, this implies $\bar u(x)=\mathrm{dist}(x,\partial\Omega)$ for all $x\in \Omega$, thus concluding the proof.
\end{proof}

\begin{remark} When the domain is a ball $\mathcal B_R\subset\mathbb R^N$ and the problem is autonomous with a nonlinearity $h=h(u)$ of class $C^1$, all positive $C^2$-solutions of \eqref{Plm} are radially symmetric, cf. \cite[Appendix]{CoCoRi14} and \cite{GiNiNi79}. This would be the case, for instance, when $h(u)=|u|^{p-2}u$, for the ground-state solution and also for the mountain pass solution when $\lambda<\lambda_1$. In the next section, we will prove that, if $\mu$ is sufficiently large, there also exist multiple nodal radial solutions. It would be interesting to investigate whether the linking solution that we find when $\lambda\ge \lambda_1$ breaks the symmetry under suitable assumptions on $\mu$ or $R$. 
\end{remark}

\section{The radial case: a shooting approach}\label{sec3}

In this section, we focus on problem \eqref{Plm} on the radial domain $\Omega = \mathcal{B}_R(0)$, and with nonlinear term $h$ given by
$$
h(x,u) = q(|x|) g(u),
$$
where the following assumptions are made:
\begin{enumerate}
\item[$(g_1)$] $g: \mathbb{R} \to \mathbb{R}$ is locally Lipschitz continuous;

\item[$(g_2)$] there exist $\delta > 0$, $\gamma > 2$, and $0 < C_1 \leq C_2$ such that $C_1 |s|^\gamma \leq g(s) s \leq C_2 |s|^\gamma$ for every $ s \in (-\delta, \delta)$;

\item[$(q)$] $q: [0,R] \to \mathbb{R}$ is continuous and $ \displaystyle q_0:=\min_{r \in [0,R]} q(r) >0 $.
\end{enumerate} 
Incidentally, let us note that $(g_2)$ implies on one hand a superlinear behaviour for $h$ at $s= 0$ (that is $h(x,s)/s \to 0$ as $s \to 0$, cf. assumption $(h_3)$); on the other hand, together with $(q)$, it yields the sign-condition $h(x,s)s > 0$ for $0 \neq |s| \leq \delta$ (that is, assumption $(h_2)$). 

In this setting, we look for radial solutions $u(r) = u(|x|)$, thus leading to the problem 
\begin{equation}
\label{radial}
    \begin{cases}
        \begin{aligned}
           &- \left(r^{N-1}\frac{u^{\prime}}{\sqrt{1-(u^{\prime})^2}} \right)^\prime = r^{N-1} \left(\lambda u+ \mu q(r)g(u) \right), \quad &&r \in (0,R)  \, \\
           & u^\prime(0) = 0, \, u(R) = 0.
        \end{aligned}
    \end{cases}
\end{equation}
The section is organized as follows. In Section \ref{sec-3.1}, we state and prove an existence and multiplicity result for problem \eqref{radial}, with sharp information about the nodal behaviour of the solutions; we stress that in this result the parameters $\lambda$ and $\mu$ are required to be non-negative. In Section \ref{sec-bifurcation}, we discuss some bifurcation diagrams with respect to the parameter $\lambda$ and the parameter $\mu$.

For further convenience, we also introduce here the additional condition
\begin{enumerate}
\item[$(g_3)$] $g(s)s > 0$ for every $s \neq 0$ with $\vert s \vert \leq R$
\end{enumerate}
which will appear in the second part of the statement of Theorem \ref{mainresult}.

\subsection{Existence and multiplicity of nodal solutions}\label{sec-3.1}

Denoting by $\lambda_k^{\textnormal{rad}}$, with $k\geq 1$, the $k$-th radial eigenvalue of the Dirichlet Laplacian in $\mathcal{B}_R(0)$, and setting by convention $\lambda_0^{\textnormal{rad}} = 0$, the main result of the section reads as follows (of course, the choice $g(s) = \vert s \vert^{p-2} s$, with $p > 2$, and $q(r) \equiv 1$ provides the existence/multiplicity part of Theorem \ref{thm:radialintro} in the Introduction).

\begin{theorem}\label{mainresult}
Let us suppose that assumptions $(g_1)$, $(g_2)$ and $(q)$ hold true. Then, there exists a family $\{\mu_j^\star\}_{j= 1}^{+\infty}$ of functions $\mu_j^\star : [0, +\infty) \to [0, +\infty)$, satisfying, for every $j \geq 1,$
\begin{enumerate}
    \item[(i)] $\mu_j^\star(\lambda) = 0$ if $\lambda > \lambda_j^{\textnormal{rad}}$,
    \item[(ii)] $\mu_j^\star(\lambda) \leq \mu_{j+1}^\star(\lambda),$ for all $\lambda \geq0$,
\end{enumerate}
such that problem \eqref{radial} has at least $2j-k$ solutions when
$$
\lambda \in [\lambda_k^{\textnormal{rad}}, \lambda_{k+1}^{\textnormal{rad}}) \quad \text{and} \quad \mu > \mu_j^\star(\lambda), \qquad \text{for some } j\geq \max\{1, k\}.
$$
Precisely, these solutions can be labeled as $\{u_{\lambda,\mu,\ell}^{(s)}\}_{\ell =k+1}^j$, $\{u_{\lambda,\mu,\ell}^{(l)} \}_{\ell = 1}^j$ in such a way that
\begin{itemize}
    \item $0 < u_{\lambda,\mu,k+1}^{(s)}(0) < \dots < u_{\lambda,\mu,j}^{(s)}(0) < u_{\lambda,\mu,j}^{(l)}(0) < \dots < u_{\lambda,\mu,1}^{(l)}(0),$
    \item $u_{\lambda,\mu,\ell}^{(s)}$ has exactly $\ell -1$ zeros on $(0,R)$, for every $\ell = k+1, \dots, j$,
    \item $u_{\lambda,\mu,\ell}^{(l)}$ has exactly $\ell -1$ zeros on $(0,R)$, for every $\ell = 1, \dots, j$.
\end{itemize}
If, moreover, assumption $(g_3)$ holds, then all these solutions are strictly decreasing in the interval from $0$ to the first zero, and have exactly one local extremum between any two consecutive zeros.
\end{theorem}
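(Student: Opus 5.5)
The plan is a shooting argument in the initial value $d=u(0)>0$. For each $d>0$ we consider the Cauchy problem for the radial equation with $u(0)=d$, $u'(0)=0$. Writing $\phi(u')=u'/\sqrt{1-(u')^2}$, we set $v=r^{N-1}\phi(u')$, which gives a singular first-order system. Local existence and uniqueness near $r=0$ follow from a standard fixed-point argument for the integral equation $u'(r)=\phi^{-1}\bigl(-r^{1-N}\int_0^r s^{N-1}(\lambda u+\mu q g(u))\,ds\bigr)$, using $(g_1)$. Global continuation on $[0,R]$ holds because $|u'|<1$ automatically, so $|u|\le d+R$. We also need continuous dependence on $d$, and that zeros of $u(\cdot;d)$ are simple (if $u=u'=0$ then $u\equiv0$ by uniqueness). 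We then introduce polar coordinates $u=\varrho\cos\theta$, $\phi(u')=-\varrho\sin\theta$ (clockwise rotation), with $\theta(0;d)=0$. A solution of \eqref{radial} with exactly $\ell-1$ zeros in $(0,R)$ corresponds to $\theta(R;d)=\pi/2+(\ell-1)\pi$. Since solutions never pass through the origin, $\theta(R;\cdot)$ is continuous on $(0,+\infty)$, and we also use that $\theta$ crosses the levels $\pi/2+m\pi$ only in increasing direction.

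Step one: small data. As $d\to0^+$, rescale $w=u/d$. By $(g_2)$, $g(du)/d\to0$ locally uniformly, and $\phi^{-1}$ is asymptotically linear, so $w$ converges in $C^1[0,R]$ to the solution of the linear radial problem $-(r^{N-1}w')'=\lambda r^{N-1}w$, $w(0)=1$, $w'(0)=0$, uniformly for $\mu$ in compact sets. By Sturm comparison, for $\lambda\in(\lambda_k^{\rm rad},\lambda_{k+1}^{\rm rad})$ the limit has exactly $k$ zeros in $(0,R)$ and $w(R)\neq0$. Hence $\theta(R;d)$ converges to a value in $(\pi/2+(k-1)\pi,\,\pi/2+k\pi)$. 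At $\lambda=\lambda_k^{\rm rad}$ the limit equals $\pi/2+(k-1)\pi$. Here we would use the superlinear term, which has the favourable sign and pushes the rotation slightly further for small $d$, so $\theta(R;d)>\pi/2+(k-1)\pi$. In this case the level $k$ is counted among the large solutions but not the small ones, which matches the count $2j-k$. This borderline case requires a delicate first-order perturbation estimate, which I would postpone to the appendix-type lemma.

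Step two: large data. For $d>R$, $u>d-R>0$ on $[0,R]$, so $\theta(R;d)<\pi/2$ and there are no zeros. Step three: intermediate data, which is the main obstacle. We fix $d_*\in(0,\delta)$ (for instance $d_*=\delta/2$) and show that $\theta(R;d_*)\to+\infty$ as $\mu\to+\infty$, uniformly for $\lambda$ in compact subsets of $[0,\infty)$. The idea is the one in \cite{BoGa19}. Near $r=0$ the solution starts at $d_*$ with zero slope and $\lambda u+\mu qg(u)\ge \mu q_0 C_1 d_*^{\gamma-1}$, so $|u'|$ gets close to $1$ within a time $O(1/\mu)$. The solution then descends with slope nearly $-1$ and reaches $0$ in time about $d_*$. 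One can estimate the decay of amplitude per half-oscillation (which is also where amplitudes shrink while the nonlinear force stays of order $\mu$), and the time to complete a half-turn is at most $C/\mu^{\beta}$ for some $\beta>0$ as long as the amplitude stays above a threshold. I would try to control amplitude loss by an energy $E=\Phi^*(\phi(u'))+\lambda u^2/2+\mu qG(u)$, using the fact that $r^{N-1}$ damping and the variation of $q$ only cause bounded relative losses once $r\ge r_0>0$, while on $[0,r_0]$ we use the explicit description above. This gives at least $j$ full half-turns on $[0,R]$ for $\mu>\mu_j^\star(\lambda)$. We define $\mu_j^\star(\lambda)$ as the infimum of such thresholds, set it to $0$ when $\lambda>\lambda_j^{\rm rad}$ (where small data already rotate enough, by Step one), and make it monotone in $j$ by taking the maximum over $i\le j$.

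Conclusion. By Step one, $\theta(R;d)$ is close to $k\pi$ for small $d$. By Step three it exceeds $\pi/2+(j-1)\pi$ at $d_*$, and by Step two it is below $\pi/2$ for $d>R$. The intermediate value theorem on $(0,d_*)$ then yields solutions hitting the levels $\pi/2+(\ell-1)\pi$ for $\ell=k+1,\dots,j$, namely the $u^{(s)}$. On $(d_*,R+1)$ it yields solutions for $\ell=1,\dots,j$, namely the $u^{(l)}$. Choosing the first crossing (for $u^{(s)}$) and the last one (for $u^{(l)}$) gives the ordering of the initial values. Under $(g_3)$ with $\lambda\ge0$ the right-hand side has the sign of $u$ on $|u|\le R$. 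So $\phi(u')$ is strictly monotone between zeros of $u$, which gives strict decrease up to the first zero and a single extremum between consecutive zeros.
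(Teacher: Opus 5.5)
\textbf{The gap is in Step three.} This is the step that makes $\mu_j^\star(\lambda)$ finite, so it carries the theorem.

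For a \emph{fixed} initial value $d_*$, the rotation $\theta(R;d_*)$ does not tend to $+\infty$ as $\mu\to+\infty$. The mechanism you describe cannot work because of the gradient constraint. Since $|u'|<1$, going from an extremum of height $A$ to the next zero takes time at least $A$. So a half-turn can last $O(\mu^{-\beta})$ only if the amplitude is itself $O(\mu^{-\beta})$, which contradicts ``as long as the amplitude stays above a threshold''.

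Nor does the amplitude collapse. Take $g(s)=|s|^{p-2}s$, $q\equiv 1$, and set $w=\phi(u')$ and $E=\sqrt{1+w^2}-1+\frac{\lambda}{2}u^2+\frac{\mu}{p}|u|^p$.
\begin{itemize}
\item A direct computation gives $E'=-(N-1)u'w/r\ge-(N-1)(E+1)/r$.
\item Since $u\ge d_*/2$ on $[0,d_*/2]$, we get $|w(d_*/2)|\ge\mu(d_*/2)^p/N$.
\item Hence $E+1\ge c\mu$ on $[d_*/2,R]$, with $c>0$ independent of $\mu$.
\item At every extremum $w=0$, so there $|u|\ge c'>0$ for $\mu$ large.
\item Consecutive zeros are therefore at distance at least $2c'$.
\end{itemize}
So the number of zeros on $(0,R]$, and hence $\theta(R;d_*)$, stays bounded as $\mu\to+\infty$. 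In the limit, $u$ approaches a zigzag with slopes $\pm1$. Your argument therefore does not establish $\mu_j^\star(\lambda)<+\infty$.

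\textbf{What is missing.} The fast-rotating solutions have amplitude of order $\mu^{-1/\gamma}$, so the shooting value must depend on $\mu$. The paper proceeds as follows.
\begin{itemize}
\item Fix $r_0\in(0,R)$. By continuity in $\alpha$, between $\alpha=0$ (the trivial solution) and $\alpha=R+1$ (where $u\ge1$), pick $\hat\alpha_j(\mu)$ whose solution satisfies $\mu^{2/\gamma}u(r_0)^2+v(r_0)^2=\Gamma_j^2$, with $\Gamma_j<\delta$.
\item Rescale by $r=t\mu^{-1/\gamma}$, $x=\mu^{1/\gamma}u$, $y=v$. The new system obeys $\mu$-independent sign bounds $a_1(y)y\le X_\mu(t,y)y\le b_1(y)y$ and $b_2(x)x\le Y_\mu(t,x)x\le a_2(x)x$ on $[\mu^{1/\gamma}r_0,\mu^{1/\gamma}R]$. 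The length of this interval tends to $+\infty$.
\item Proposition~\ref{Prop21} then gives more than $(j+1)\pi$ of rotation on $[r_0,R]$.
\item On $[0,r_0]$ the rotation is bigger than $-\pi$, because the $v$-axis is crossed only clockwise.
\end{itemize}
Your intermediate value argument must then be split at this $\mu$-dependent value, or at any $\alpha^\star$ supplied by the definition of $\mu_j^\star$, not at a fixed $d_*$.

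\textbf{A side point on Step one.} Your resonant claim $\theta(R;d)>\pi/2+(k-1)\pi$ at $\lambda=\lambda_k^{\textnormal{rad}}$ is not needed. Only $\theta(R;d)<\pi/2+k\pi$ for small $d$ enters the count, and your limit argument already gives this for every $\lambda<\lambda_{k+1}^{\textnormal{rad}}$. The claim also looks false in general. The relativistic correction in $\phi^{-1}$ slows the rotation at order $d^2$, while the nonlinearity speeds it up only at order $d^{\gamma-2}$. So for $\gamma>4$ one expects the opposite inequality.
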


\begin{remark}\label{rem_mu=0}
We point out that, as a byproduct of the arguments developed in this section, one can prove that, when $\lambda > \lambda_k^{\mathrm{rad}}$ for some integer $k\ge 1$, for every $\mu\ge 0$, problem \eqref{radial} has at least $k$ radial solutions $\{u_{\lambda,\mu,\ell}\}_{\ell=1}^k$ with $u_{\lambda,\mu,\ell}(0)>0$ and exactly $\ell-1$ zeros in $(0,R)$.
This result was already obtained by a bifurcation approach in \cite{DaWa17}; we can recover it by reasoning as in the proof of Theorem \ref{mainresult}, and using Lemmas \ref{lem:large} and \ref{lemmak} below, and the continuity of the map $\alpha \mapsto \theta_\alpha(R)$. Incidentally, note that weaker assumptions on $g$ and $q$ are needed in this argument ($q\in C([0,R])$ replacing $(q)$ and $g(s)/s\to 0$ as $s\to 0$ replacing $(g_2)$). 

With this remark, we can complete the multiplicity picture given by Theorem \ref{mainresult} with the further information
that the number of radial solutions of \eqref{radial} is at least $k$ when $\lambda> \lambda_k^{\mathrm{rad}}$ and $\mu = 0$ (note that, as $0=\mu^\star_1(\lambda)=\dots=\mu^\star_{k}(\lambda)=\dots=\mu^\star_j(\lambda)$ for $\lambda> \lambda_k^{\mathrm{rad}}$, the value $\mu = 0$ is actually the only one which is not covered by Theorem \ref{mainresult}). The resonant case 
$\lambda =  \lambda_k^{\mathrm{rad}}$, on the other hand, seems to be more delicate. We conjecture that, in this situation, a solution with $k-1$ zeros on $(0,R)$ does not exist when $\mu = 0$ (in the particular case $k=1$, this is actually proved in \cite{Mi21}) and that, even more, $\mu_k^\star(\lambda^\mathrm{rad}_k) > 0$ (namely, a solution with $k-1$ zeros on $(0,R)$ does not exist, in general, when $\mu$ is strictly positive but small).
\end{remark}

\begin{remark}
We remark that, with completely symmetric arguments, the existence of families of solutions $u_{\ell}^{(s)}$ and $u_{\ell}^{(l)}$ satisfying $u(0) < 0$ can be established, as well (when $g$ is odd, of course, it suffices to change sign to the solutions found in Theorem \ref{mainresult}). For brevity, however, we have preferred to state our result in this simpler form.
\end{remark}

The rest of this section will be devoted to the proof of Theorem \ref{mainresult}, which, as already mentioned in the Introduction, relies on a shooting technique. More precisely, in Section \ref{sec-radial1} we outline the shooting approach. Then, in Section \ref{sec-radial2} we prove some properties about the rotational behaviour of small and large solutions in the phase-plane. Finally, in Section \ref{sec-radial3} we focus on the behaviour of intermediate solutions and we conclude the proof of the theorem.

\subsubsection{The shooting approach}\label{sec-radial1}

Let us introduce
$$
\varphi(x) := \frac{x}{\sqrt{1-x^2}} \qquad \text{for all } x \in (-1,1)
$$ 
and its inverse 
\begin{equation}\label{inversebounded}
\varphi^{-1}(y) = \frac{y}{\sqrt{1+y^2}} \qquad \text{for all } y \in \mathbb{R};
\end{equation}
note that $$|\varphi^{-1}(y)| < 1 \qquad \text{for all } y \in \mathbb{R}.$$
Using this notation, of course, problem \eqref{radial} rewrites as 
\begin{equation}
  \label{eqequivalent}
    \begin{cases}
    -(r^{N-1} \varphi( u^\prime) )^\prime = r^{N-1}\left(\lambda u + \mu q(r) g(u)\right),\quad r\in(0,R),\\
    u^\prime(0) = 0, \quad u(R) = 0.
\end{cases}
\end{equation}
To prove Theorem \ref{mainresult}, we will look for solutions of \eqref{eqequivalent} using a shooting method. That is, we will be looking for $\alpha \neq 0$ such that the solution of the Cauchy problem
\begin{equation}
\label{42}
\begin{cases}
-(r^{N-1} \varphi( u^\prime) )^\prime = r^{N-1}\left(\lambda u + \mu q(r) g(u)\right),\quad r\in(0,R), \\
u'(0)=0,\quad u(0)=\alpha,
\end{cases}
\end{equation}
satisfies $u(R)=0$. 

Precisely, we consider the following Cauchy problem for the corresponding first-order planar system
\begin{equation}
\label{system}
\begin{cases}
u' = \displaystyle{\varphi^{-1}\!\left(\frac{v}{r^{N-1}}\right)}, \qquad\qquad\qquad & r \in (0, R),\smallskip\\
v' = -r^{N-1} ( \lambda u+ \mu q(r)g(u)), &r \in (0, R),\\
u(0) = \alpha, \quad
v(0) = 0.&
\end{cases}
\end{equation}
As shown in \cite{BoCoNo20}, despite the presence of the singular term $v/r^{N-1}$, the Cauchy problem \eqref{system} enjoys the standard well-posedness properties. In particular, the solutions are unique, globally defined, and continuously depend on the initial parameter $\alpha$.
These properties are summarized in the following lemma.

\begin{lemma}[{\cite[Lemma 2.1, Remark 2.2]{BoCoNo20}}]
\label{continuityalpha}
For any $\alpha \in \mathbb{R}$, problem \eqref{system} has a unique solution $(u_\alpha, v_\alpha)$ defined on $[0,R]$, which in particular satisfies $$ \lim_{r \to 0^+} \frac{v_\alpha (r)}{r^{N-1}} = 0,$$
that is, $v_\alpha(r) / r^{N-1}$ can always be extended continuously by $0$ at $r=0$.
Moreover, if $\alpha \to \bar{\alpha}$ , then
\begin{equation}
\label{unifalpha}
\begin{aligned}
    &u_{\alpha}(r) \to u_{\bar{\alpha}} (r) \quad &&\text{uniformly on } [0,R], \\
    &v_{\alpha}(r) \to v_{\bar{\alpha}} (r) \quad &&\text{uniformly on } [0,R],\\
    &\frac{v_\alpha(r)}{r^{N-1}} \to \frac{v_{\bar \alpha}(r)}{r^{N-1}} &&\text{uniformly on }[0,R].
\end{aligned}\end{equation}
\end{lemma}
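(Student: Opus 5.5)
The plan is to reduce \eqref{system} to a regular problem away from $r=0$ and to handle the singular point by a fixed-point argument on a short interval $[0,r_0]$. Integrating the second equation from $0$ gives
\[
v(r) = -\int_0^r s^{N-1}\bigl(\lambda u(s)+\mu q(s) g(u(s))\bigr)\,ds ,
\]
so that
\[
u(r)=\alpha+\int_0^r \varphi^{-1}\!\left(-\frac{1}{t^{N-1}}\int_0^t s^{N-1}\bigl(\lambda u+\mu q g(u)\bigr)\,ds\right)dt .
\]
I will set up this integral operator $T_\alpha$ on $C([0,r_0])$, acting on the closed ball of radius $1$ around the constant $\alpha$. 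The key bound is
\[
\left|\frac{1}{t^{N-1}}\int_0^t s^{N-1} f(s)\,ds\right| \le \frac{t}{N}\,\|f\|_\infty .
\]
Combined with the fact that $\varphi^{-1}$ is $1$-Lipschitz and $g$ is locally Lipschitz by $(g_1)$, this makes $T_\alpha$ a self-map and a contraction for $r_0$ small. The constant depends only on a bound for $|\alpha|$, $\lambda$, $\mu$, $\|q\|_\infty$ and the Lipschitz constant of $g$ on a bounded set.

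This gives local existence and uniqueness, and the same estimate shows $|v(r)/r^{N-1}|\le Cr\to 0$, which is the claimed continuous extension by $0$. Since contraction fixed points depend continuously on the parameter $\alpha$, the convergence in \eqref{unifalpha} also holds on $[0,r_0]$, for $\alpha$ in a compact set with uniform $r_0$. The third quantity $v_\alpha/r^{N-1}$ converges uniformly there by the same averaging bound applied to differences.

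On $[r_0,R]$ the system is a regular ODE with locally Lipschitz right-hand side, because $r^{N-1}\ge r_0^{N-1}>0$ and $\varphi^{-1}$ is smooth. Standard Cauchy–Lipschitz theory then gives uniqueness and continuous dependence on the data at $r_0$. The main point is global existence up to $R$, i.e.\ ruling out blow-up. Here I use $|u'|=|\varphi^{-1}(\cdot)|<1$, which gives $|u(r)|\le|\alpha|+R$ a priori. Then $|v'|\le R^{N-1}(|\lambda|(|\alpha|+R)+\mu\|q\|_\infty \max_{|s|\le|\alpha|+R}|g(s)|)$, so $v$ is bounded as well, and the solution cannot escape to infinity before $R$.

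Because these a priori bounds are uniform for $\alpha$ near $\bar\alpha$, continuous dependence on compact intervals extends to uniform convergence on $[r_0,R]$, and hence on all of $[0,R]$. On $[r_0,R]$, convergence of $v_\alpha/r^{N-1}$ follows from that of $v_\alpha$ since $r^{N-1}$ is bounded below.

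I expect the main obstacle to be the singular point $r=0$: choosing the function space and the averaging estimate so that uniqueness holds among all solutions. This requires showing that any solution automatically satisfies the integral formulation with $v(0)=0$ and $v/r^{N-1}\to 0$. The rest is routine ODE theory.
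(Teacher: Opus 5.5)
Your proposal is correct. The paper gives no proof of this lemma; it imports it from \cite[Lemma 2.1, Remark 2.2]{BoCoNo20}. What you have written is a self-contained version of the standard argument: the integral formulation plus a contraction near the singular point $r=0$, then Cauchy--Lipschitz on $[r_0,R]$ with the a priori bound $|u'|<1$ ruling out blow-up. The averaging estimate $\bigl|t^{1-N}\int_0^t s^{N-1}f\,ds\bigr|\le \tfrac{t}{N}\|f\|_\infty$ handles all three convergences in \eqref{unifalpha} on $[0,r_0]$, as you say.

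Two small refinements are worth making.

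First, the step you flag as the main obstacle is immediate. Take any solution $(u,v)$, continuous on $[0,R]$ and satisfying the system on $(0,R)$. Since $u$ is continuous on $[0,R]$, $v'=-r^{N-1}(\lambda u+\mu q g(u))$ is bounded near $0$, so $v(r)=-\int_0^r s^{N-1}(\lambda u+\mu q g(u))\,ds$. Moreover, $|u'|<1$ forces $|u(r)-\alpha|\le r$. Hence for $r_0\le 1$ every solution lies in your contraction ball on $[0,r_0]$, and uniqueness there comes for free.

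Second, your contraction ball is centred at $\alpha$ and so moves with the parameter. Continuous dependence is cleanest if you write $T_\alpha=\alpha+\mathcal T$, with $\mathcal T$ independent of $\alpha$. Choose $r_0\le 1$ using the Lipschitz constant of $g$ on $[-|\bar\alpha|-2,|\bar\alpha|+2]$, which works for all $\alpha$ with $|\alpha-\bar\alpha|\le 1$. This gives
$\|u_\alpha-u_{\bar\alpha}\|_{L^\infty(0,r_0)}\le |\alpha-\bar\alpha|/(1-\kappa)$, where $\kappa<1$ is the contraction constant.
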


\begin{remark}
\label{trivialsolution}
    Since the Cauchy problem \eqref{system} admits the trivial solution when $\alpha = 0$, and the solution is unique by Lemma~\ref{continuityalpha}, we get that $(u_0(r),v_0(r)) \equiv (0,0)$. 
\end{remark}

Let now $\alpha > 0$. Then, by the usual Cauchy-Lipschitz theory for ODEs, $(u_\alpha(r),v_\alpha(r)) \neq (0,0)$ for every $r \in [0,R]$ and so we can pass to scaled (and clockwise) polar coordinates around $(0,0)$, that is
\begin{equation}
\label{polar}
\begin{cases}
    \begin{aligned}
        &u_\alpha(r) = \beta_1 \rho_\alpha(r) \cos\theta_\alpha(r) \\
        &v_\alpha(r) = -\beta_2 \rho_\alpha(r) \sin \theta_\alpha(r),
    \end{aligned}
\end{cases}
\end{equation}
where $\rho_\alpha$ and $\theta_\alpha$ are continuous, $\rho_\alpha$ is positive, $\theta_\alpha(0) = 0$ and $\beta_1,\beta_2 > 0$ are scaling parameters.
The choice $\beta_1 = \beta_2 = 1$ gives rise, of course, to the standard polar coordinates and will be the preferred one. However, different choices will be convenient in some of the next computations (more precisely, in the proofs of Lemma \ref{lemmak} and Lemma \ref{claimellipse}): this is not a problem, since our final goal will be always that of proving estimates of the type
$$
\theta_\alpha(R) \; \substack{>\\=\\<} \; i \frac{\pi}{2}
$$
with $i$ an integer number, and it is easily seen that the validity of the above properties is independent of $\beta_1$ and $\beta_2$ (see {\cite[Proposition 2.2, Remark 2.2]{Bo11}} for a more formal discussion).

A standard computation yields in $(0,R)$
\begin{align}
    \label{eqtheta}
        \theta'_\alpha(r)& =\frac{1}{\beta_1 \beta_2 \rho_\alpha(r)^2} \left( \varphi^{-1}\left( \frac{v_\alpha(r)}{r^{N-1}}\right)v_\alpha(r) + r^{N-1} (\lambda u_\alpha(r)^2 + \mu  q(r) g(u_\alpha(r))u_\alpha(r)) \right)
     \\ & = \frac{1}{\beta_1 \beta_2} \left( \frac{\beta_2^2 \sin^2 \theta_\alpha(r)}{r^{N-1}\sqrt{1 + (v_\alpha(r)/r^{N-1})^2}}+ r^{N-1} \left( \lambda \beta_1^2 \cos^2 \theta_\alpha(r) + \frac{\mu q(r) g(u_\alpha(r))u_\alpha(r)}{\rho_\alpha(r)^2}\right) \right).\notag
\end{align}
Note that, by $(g_2)$, $g(s)s > 0$ for $0<|s| \leq \delta$ and so
\begin{equation}\label{crossing}
\theta_\alpha'(r) > 0, \quad 
\mbox{ whenever } \vert u_\alpha(r) \vert \leq \delta.
\end{equation}
In particular, $\theta_\alpha'(r) > 0$ if $u_\alpha(r) = 0$, meaning that the $v$-axis can be crossed only in the clockwise sense.

With this in mind, our goal will be that of finding suitable values of $\alpha$ such that
\begin{equation}\label{nodaltheta}
\theta_\alpha(R) = \frac{\pi}{2} + (\ell-1) \pi, \quad \mbox{ for some integer } \ell \geq 1.
\end{equation}
Indeed, by \eqref{polar} this immediately yields $u_\alpha(R) = 0$, and so $u_\alpha$ solves \eqref{eqequivalent}. Moreover, if \eqref{nodaltheta} is satisfied, $u_\alpha$ has exactly $\ell-1$ zeros on $(0,R)$: as discussed above the $v$-axis can be crossed only in the clockwise sense, and so from \eqref{nodaltheta} we infer that the number of crossing of this axis in $(0,R)$ (that is, the number of zeros of $u_\alpha$) is exactly $\ell-1$. Finally, since (due to the bound $|u'| < 1$) the solutions are a-priori bounded by $R$, if $(g_3)$ holds we get that the angular coordinate $\theta_\alpha$ is always strictly increasing, and so between any two consecutive zeros of $u_\alpha$ there is exactly one zero of $v_\alpha$ (and thus of $u_\alpha'$), and between any two consecutive zeros of $u'_\alpha$ there is exactly one zero of $u_\alpha$.

\subsubsection{The behaviour of small and large solutions}\label{sec-radial2}

Our first, very simple, lemma concerns the angular behaviour of large solutions (namely, those with $\alpha > R$) of \eqref{system}. This is an immediate consequence of the bound $|u'|<1$ of solutions which, by Proposition \ref{prop:propK0}-(i), gives the bound $\alpha=u(0)\le \tfrac12\mathrm{diam}(\mathcal B_R(0))=R$ on the initial datum. More precisely, we have the following result.

\begin{lemma}\label{lem:large}
Let $\lambda,\mu \geq 0$. Then
 \begin{equation}
        \theta_\alpha(R) <  \frac{\pi}{2}, \quad \text{ if } \alpha \geq R+1.
    \end{equation}
\end{lemma}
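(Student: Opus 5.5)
The idea is to use the a priori gradient bound $|u_\alpha'|<1$, which follows from \eqref{inversebounded} applied to the first equation of \eqref{system}. It says that a trajectory starting too high cannot reach the $v$-axis before $r=R$.

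Take $\alpha\ge R+1$. For every $r\in[0,R]$ the mean value theorem gives
\[
u_\alpha(r)\ge \alpha - r \ge \alpha - R \ge 1>0 .
\]
So $u_\alpha$ stays strictly positive on $[0,R]$. With $\beta_1=\beta_2=1$ in \eqref{polar}, this means $\cos\theta_\alpha(r)\neq 0$ for every $r\in[0,R]$.

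Since $\theta_\alpha$ is continuous and $\theta_\alpha(0)=0$, the function $\theta_\alpha$ can never leave the interval $(-\pi/2,\pi/2)$. Doing so would require passing through $\pm\pi/2$, where $u_\alpha=0$. Hence $\theta_\alpha(R)\in(-\pi/2,\pi/2)$, and in particular $\theta_\alpha(R)<\pi/2$.

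It may be useful to record also the sign information. Because $\lambda,\mu\ge0$ and $u_\alpha\ge1$, the positivity of $g$ on $(0,\delta)$ suggests $v_\alpha'\le0$, so $v_\alpha\le0$ and $\theta_\alpha\ge0$. This refinement is not needed, however: for $u\ge\delta$ the sign of $g$ is not controlled, and the argument above works without it.

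There is no real obstacle here. The only point to check is that the angular lift cannot jump past $\pi/2$, which is exactly what continuity of $\theta_\alpha$ and positivity of $u_\alpha$ guarantee.
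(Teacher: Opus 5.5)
Your proof is correct and follows the paper's argument: the gradient bound $|u_\alpha'|<1$ gives $u_\alpha\ge 1$ on $[0,R]$ for $\alpha\ge R+1$. You also spell out the step the paper leaves as ``easily follows,'' namely that $u_\alpha>0$ keeps the continuous lift $\theta_\alpha$, with $\theta_\alpha(0)=0$, inside $(-\pi/2,\pi/2)$.
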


\begin{proof}

Due to the bound 
\[
\vert u'(r) \vert = \left\vert \varphi^{-1}\left( \frac{v(r)}{r^{N-1}}\right)\right\vert < 1, \quad \text{ for every } r \in [0,R],
\]
cf. \eqref{inversebounded}, 
we have, for $\alpha \geq R+1$,
\begin{equation}\label{stima-apriori}
u_\alpha(r) = \alpha + \int_0^r u'(s)ds \geq R+1 - R = 1, \quad \text{ for every } r \in [0,R].
\end{equation}
From this, the conclusion easily follows.
\end{proof}

We now describe the angular behaviour of small solutions (namely, those with $\alpha$ close to $0$)  of \eqref{system}. 

\begin{lemma}
\label{lemmak+1}
    Let $\lambda \in [0,\lambda^{\textnormal{rad}}_{k+1})$ for some $k \geq 0$, and let $\mu \geq 0$. Then there exists $\alpha_\star = \alpha_\star(\lambda,\mu)>0$ such that
    \begin{equation}
        \theta_\alpha(R) < \frac{\pi}{2} + k \pi, \quad \text{ if } \alpha \in (0,\alpha_\star).
    \end{equation}
\end{lemma}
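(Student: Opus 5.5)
The plan is a linearization argument around the trivial solution, combined with a comparison of rotation numbers. For small $\alpha$ we rescale: set $z_\alpha(r):=u_\alpha(r)/\alpha$ and $w_\alpha(r):=v_\alpha(r)/\alpha$. Then $(z_\alpha,w_\alpha)$ solves
\[
z'=\frac{1}{\alpha}\varphi^{-1}\!\left(\frac{\alpha w}{r^{N-1}}\right),\qquad w'=-r^{N-1}\Big(\lambda z+\mu q(r)\frac{g(\alpha z)}{\alpha}\Big),\qquad z(0)=1,\ w(0)=0.
\]
As $\alpha\to0^+$, this system formally converges to the linear radial eigenvalue-type problem
\[
z'=\frac{w}{r^{N-1}},\qquad w'=-\lambda r^{N-1}z,\qquad z(0)=1,\ w(0)=0.
\]
Two facts drive the limit. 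First, $\varphi^{-1}(y)/y\to 1$ as $y\to0$. Second, by $(g_2)$ (in fact $g(s)/s\to0$ suffices), $g(\alpha z)/\alpha\to0$ locally uniformly. The limit problem is solved by $z_0(r)=\psi_\lambda(r)$, the regular solution of $(r^{N-1}z')'+\lambda r^{N-1}z=0$ with $z(0)=1$; for $\lambda=0$ this is $z_0\equiv1$.

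The first step is to show that $(z_\alpha,w_\alpha)\to(z_0,w_0)$ uniformly on $[0,R]$, together with $w_\alpha/r^{N-1}\to w_0/r^{N-1}$. I would argue as in the continuous dependence result Lemma \ref{continuityalpha} (from \cite{BoCoNo20}), treating $\alpha$ as a parameter of a family of systems that is continuous in $\alpha$ up to $\alpha=0$. An a priori bound is needed first. Since $|\varphi^{-1}(y)|\le|y|$ and $|g(s)|\le C_2|s|^{\gamma-1}$ for small $|s|$, a Gronwall argument on the integral formulation near the singular point $r=0$ gives it. Near $r=0$ one uses $w(r)=-\int_0^r s^{N-1}(\dots)\,ds=O(r^N)$. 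The one genuine subtlety is the singularity at $r=0$, and the same fixed-point/Gronwall estimates as in \cite{BoCoNo20} handle it uniformly in $\alpha\in[0,1]$.

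The second step is to show that the angle of the limit solution satisfies $\theta_0(R)<\pi/2+k\pi$. The angular coordinate of $(z_0,w_0)$ equals the angle of the rescaled trajectory, since the rescaling by $\alpha>0$ does not change angles. By Sturm theory, $\psi_\lambda$ has exactly as many zeros in $(0,R]$ as there are radial eigenvalues strictly below $\lambda$. Hence for $\lambda<\lambda^{\rm rad}_{k+1}$ it has at most $k$ zeros in $(0,R]$. If $\lambda$ is not an eigenvalue, then $\psi_\lambda(R)\ne0$. In the limit system the $z$-axis crossing is clockwise as well: $\theta'>0$ when $w=0$, since $\lambda\ge0$; for $\lambda=0$ we simply have $\theta\equiv0$. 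Therefore the count of zeros gives $\theta_0(R)<\pi/2+k\pi$. In the borderline case $\lambda=\lambda_k^{\rm rad}$ we have $\psi_\lambda(R)=0$ with $k-1$ zeros inside, so $\theta_0(R)=\pi/2+(k-1)\pi<\pi/2+k\pi$ strictly. Either way the inequality is strict.

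The final step uses the uniform convergence from the first step, which implies $\theta_\alpha(R)\to\theta_0(R)$ as $\alpha\to0^+$. Because the inequality from the second step is strict, there is $\alpha_\star>0$ such that $\theta_\alpha(R)<\pi/2+k\pi$ for all $\alpha\in(0,\alpha_\star)$. I expect the main obstacle to be the uniform convergence near the singular endpoint $r=0$. Should this become delicate, I would instead compare $\theta_\alpha$ with the angle of a Sturm-type comparison equation whose potential is $\lambda+\mu q g(u)/u\le\lambda+\varepsilon$. The operator $\varphi$ is handled via $\varphi^{-1}(y)\le y$, and $\varepsilon$ is chosen so that $\lambda+\varepsilon<\lambda_{k+1}^{\rm rad}$.
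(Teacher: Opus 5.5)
Your proof is correct, but it takes a different route from the paper. Your closing fallback is essentially the paper's proof.

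\textbf{The paper's route.} The paper never passes to the limit $\alpha\to0^+$. It uses Lemma \ref{continuityalpha} only to ensure $|u_\alpha|\le\tilde\delta$ on $[0,R]$. Then $\mu q(r)g(u_\alpha)u_\alpha\le\frac{\varepsilon}{2}u_\alpha^2$, where $\lambda<\lambda^{\textnormal{rad}}_{k+1}-\varepsilon$. Together with $1/\sqrt{1+(v_\alpha/r^{N-1})^2}\le1$, this gives the pointwise inequality $\theta_\alpha'\le \sin^2\theta_\alpha/r^{N-1}+\bar\lambda\, r^{N-1}\cos^2\theta_\alpha$ with $\bar\lambda<\lambda^{\textnormal{rad}}_{k+1}$. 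A comparison with the angle of the $(k+1)$-th radial eigenfunction, which ends at $\pi/2+k\pi$, concludes. This is shorter and uses only the continuous dependence already available. However, it relies on a comparison theorem for an angular equation that is singular at $r=0$ (delegated to \cite{BoCoNo20}), and it yields only a one-sided bound.

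\textbf{What your route buys and costs.} Your linearization identifies the exact limit $\lim_{\alpha\to0^+}\theta_\alpha(R)=\theta_0(R)$, the rotation of the linear radial problem. This holds independently of $\mu$ and uses only $g(s)/s\to0$. It therefore gives both the present lemma and Lemma \ref{lemmak} at once, since $\theta_0(R)>\pi/2+(k-1)\pi$ when $\lambda>\lambda^{\textnormal{rad}}_k$. The cost is that Step 1 is not literally Lemma \ref{continuityalpha}, which concerns the initial datum of a fixed system. You need continuity in a parameter of the vector field, up to the degenerate value $\alpha=0$. Your Gronwall sketch does cover this:
\begin{itemize}
\item $y\mapsto y/\sqrt{1+\alpha^2y^2}$ is $1$-Lipschitz uniformly in $\alpha$;
\item $|w(r)|/r^{N-1}\le \frac{r}{N}\sup_{[0,r]}|\lambda z+\mu q\,g(\alpha z)/\alpha|$, so the singularity at $r=0$ is harmless.
\end{itemize}
The convergence of the lifted angles then needs $(z_0,w_0)\neq(0,0)$ on $[0,R]$, which follows from uniqueness for the linear problem.

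\textbf{Two cosmetic slips.} First, $\psi_\lambda$ has as many zeros in $(0,R)$, not $(0,R]$, as there are radial eigenvalues strictly below $\lambda$. Second, the crossing that matters for counting zeros is that of the $w$-axis, where $\theta_0'=r^{1-N}>0$, not the $z$-axis. Neither affects your conclusion, since $\theta_0$ is strictly increasing for $\lambda>0$ anyway.
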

\begin{proof}
Since $\lambda < \lambda^{\textnormal{rad}}_{k+1}$, there exists $\varepsilon > 0$ such that
\begin{equation}
\label{eq:lambda_upper}
    \lambda < \lambda^{\textnormal{rad}}_{k+1} - \varepsilon.
\end{equation}
In view of $(g_2)$ and $(q)$, we can thus find $\tilde \delta \in (0,\delta)$ such that
\begin{equation}
\label{delta}
    0 \leq \mu q(r) g(u) u \leq\frac{\varepsilon}{2}\, u^2
    \qquad\text{ for all }  |u| \leq \tilde\delta \text{ and } \, r \in [0,R].
\end{equation}
By Lemma~\ref{continuityalpha} and Remark \ref{trivialsolution}, the solution
$(u_\alpha, v_\alpha)$ converges uniformly to $(0,0)$ on $[0,R]$ as $\alpha \to 0^+$.
Therefore, there exists $\alpha_\star > 0$ such that, for every $\alpha \in (0, \alpha_\star)$,
\begin{equation}
\label{eq:bound_upper}
    |u_\alpha(r)| \leq \tilde\delta, \qquad\text{ for all } r \in [0,R].
\end{equation}
    We now estimate the terms in the angular equation~\eqref{eqtheta},
setting $\beta_1=\beta_2=1$ in~\eqref{polar}.

    \begin{itemize}
        \item \textbf{Estimate of the $\sin^2\theta_\alpha$ term.} Since $1 + (v_\alpha/r^{N-1})^2 \geq 1$, we have
\begin{equation}
\label{eq:sin_upper}
\frac{\sin^2\theta_\alpha}{r^{N-1}\sqrt{1+(v_\alpha/r^{N-1})^2}}
    \leq \frac{\sin^2\theta_\alpha}{r^{N-1}}.
\end{equation}
\item \textbf{Estimate of the $\cos^2\theta_\alpha$ term.}
Conditions ~\eqref{delta} and \eqref{eq:bound_upper} give, for $\alpha \in (0,\alpha_\star)$,
$$
    \frac{r^{N-1}\,\mu\,q(r)\,g(u_\alpha)u_\alpha}{\rho_\alpha^2}
    \leq \frac{\varepsilon}{2}\,r^{N-1}\cos^2\theta_\alpha.
$$
Therefore, using ~\eqref{eq:lambda_upper} yields
\begin{equation}
\label{eq:cos_upper}
    r^{N-1}\lambda\cos^2\theta_\alpha
    + \frac{r^{N-1}\,\mu\,q(r)\,g(u_\alpha)u_\alpha}{\rho_\alpha^2}
    \leq r^{N-1}\,\overline\lambda\,\cos^2\theta_\alpha,
\end{equation}
where $\overline\lambda := \lambda^{\textnormal{rad}}_{k+1} - \varepsilon/2 < \lambda^{\textnormal{rad}}_{k+1}$.
    \end{itemize}
Collecting~\eqref{eq:sin_upper} and~\eqref{eq:cos_upper} into~\eqref{eqtheta},
we obtain, for every $\alpha \in (0,\alpha_\star)$,
$$
    \theta_\alpha'
    \leq \frac{\sin^2\theta_\alpha}{r^{N-1}}
    + r^{N-1}\,\overline\lambda\,\cos^2\theta_\alpha.
$$
     Then, considering $\vartheta_{\lambda_{k+1}^{\textnormal{rad}}}$ as the solution of
     $$\vartheta_{\lambda_{k+1}^{\textnormal{rad}}}' = \frac{\sin ^2 \vartheta_{\lambda_{k+1}^{\textnormal{rad}}}}{r^{N-1}} +  r^{N-1} \,\lambda_{k+1}^{\textnormal{rad}} \cos^2 \vartheta_{\lambda_{k+1}^{\textnormal{rad}}} \quad \text{in } (0,R),$$
     and following the lines of \cite{BoCoNo20}, through the Comparison Theorem for ODEs, we can conclude that $$\theta_\alpha(R) < \vartheta_{\lambda_{k+1}^{\text{rad}}}(R) = \frac{\pi}{2} + k\pi,$$ 
     where the last equality comes from the observation that $\vartheta_{\lambda_{k+1}^{\text{rad}}}$ is the angular part of the $(k+1)$-th radial eigenfunction of the Laplacian and from the known properties of the eigenvalue problem of the Laplacian in the radial case.
\end{proof}

\begin{lemma}
\label{lemmak}
    Let $\lambda > \lambda^{\textnormal{rad}}_k$ with $k\geq 1$. Then, for every $\mu\geq0$ there exists $\alpha_{\mu} >0$ such that \begin{equation}
        \theta_{\alpha}(R) > \frac{\pi}{2} + (k-1)\pi,\qquad \text{if } \alpha \in(0, \alpha_{\mu}). 
    \end{equation}
\end{lemma}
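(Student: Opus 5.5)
This is the lower-bound counterpart of Lemma \ref{lemmak+1}. We compare the angular equation \eqref{eqtheta} from below with the angular equation of the linear radial eigenvalue problem. The compared problem must correspond to an eigenvalue strictly between $\lambda^{\textnormal{rad}}_k$ and $\lambda$, and must also absorb the relativistic factor $1/\sqrt{1+(v/r^{N-1})^2}$.

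\textbf{Choice of constants.} Since $\lambda>\lambda_k^{\textnormal{rad}}$, fix $\varepsilon>0$ with $\underline\lambda:=\lambda/(1+\varepsilon)>\lambda_k^{\textnormal{rad}}$. Take $\mu\ge0$; by $(g_2)$ and $(q)$ we have $\mu q(r)g(u)u\ge0$ whenever $|u|\le\delta$, so the nonlinear term in \eqref{eqtheta} can simply be dropped as long as $|u_\alpha|\le\delta$. This is why the threshold $\alpha_\mu$ may depend on $\mu$, although only through $\delta$.

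\textbf{Smallness of the solution.} By Lemma \ref{continuityalpha} and Remark \ref{trivialsolution}, as $\alpha\to0^+$ both $u_\alpha\to0$ and $v_\alpha/r^{N-1}\to0$ uniformly on $[0,R]$. Hence there is $\alpha_\mu>0$ such that for $\alpha\in(0,\alpha_\mu)$ we have $|u_\alpha|\le\delta$ and
\[
\sqrt{1+(v_\alpha/r^{N-1})^2}\le 1+\varepsilon \quad\text{on } (0,R].
\]
With $\beta_1=\beta_2=1$, \eqref{eqtheta} then gives
\[
\theta_\alpha'\ge \frac{\sin^2\theta_\alpha}{(1+\varepsilon) r^{N-1}}+r^{N-1}\lambda\cos^2\theta_\alpha .
\]
Now choose scaling parameters so that the right-hand side becomes the standard angular equation. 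Rescale $\beta_2^2/\beta_1^2=1+\varepsilon$, or equivalently multiply through: with $\beta_1=1$, $\beta_2=\sqrt{1+\varepsilon}$, the prefactor $1/(\beta_1\beta_2)$ turns the inequality into
\[
\theta_\alpha'\ge \frac{1}{\sqrt{1+\varepsilon}}\Bigl(\frac{\sin^2\theta_\alpha}{r^{N-1}}+r^{N-1}\underline\lambda\cos^2\theta_\alpha\Bigr).
\]
A cleaner route avoids this prefactor. Note that $\vartheta$ solving $\vartheta'=\frac{\sin^2\vartheta}{(1+\varepsilon)r^{N-1}}+r^{N-1}\lambda\cos^2\vartheta$ is, in coordinates $u=\rho\cos\vartheta$, $v=-\rho\sin\vartheta$, the angle of the linear system $u'=v/((1+\varepsilon)r^{N-1})$, $v'=-r^{N-1}\lambda u$. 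That system is equivalent to $-(r^{N-1}u')'=r^{N-1}\underline\lambda u$, the radial eigenvalue equation with parameter $\underline\lambda$. So the comparison works in the plain $\beta_1=\beta_2=1$ coordinates, with no rescaling needed.

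\textbf{Comparison and the main obstacle.} Applying the comparison theorem, as in \cite{BoCoNo20}, gives $\theta_\alpha(R)\ge\vartheta(R)$. It remains to show $\vartheta(R)>\pi/2+(k-1)\pi$, which is a Sturm-type statement for the linear problem. The solution $w$ of $-(r^{N-1}w')'=r^{N-1}\underline\lambda w$ with $w(0)=1$, $w'(0)=0$ is $w(r)=\psi(\sqrt{\underline\lambda}\,r)$, where $\psi$ is the regular radial solution at eigenvalue $1$. The zeros of $\psi$ are $z_1<z_2<\dots$, and $\lambda_i^{\textnormal{rad}}=(z_i/R)^2$. Since $\underline\lambda>\lambda_k^{\textnormal{rad}}$, the $k$-th zero of $w$ lies at $z_k/\sqrt{\underline\lambda}<R$. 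By the clockwise crossing property, the angle $\vartheta$ equals $\pi/2+(k-1)\pi$ there and is strictly larger afterwards, so $\vartheta(R)>\pi/2+(k-1)\pi$.

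The delicate point is the singularity at $r=0$ in the comparison. Both angles start at $0$, and the term $\sin^2\theta/r^{N-1}$ is singular there. I would handle this exactly as in Lemma \ref{lemmak+1} and \cite{BoCoNo20}: compare on $[r_0,R]$ and let $r_0\to0$. Alternatively, observe that near $0$ both angles behave like $O(r^N)$, so the strict differential inequality propagates.
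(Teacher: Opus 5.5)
Your proposal is correct and follows the paper's argument: you drop the nonnegative term $\mu q(r)g(u_\alpha)u_\alpha$, use the uniform smallness of $v_\alpha/r^{N-1}$ as $\alpha\to0^+$ to control the factor $1/\sqrt{1+(v_\alpha/r^{N-1})^2}$, and compare with the linear radial angular equation at a parameter strictly between $\lambda_k^{\textnormal{rad}}$ and $\lambda$, treating the singularity at $r=0$ in the same level of detail as the paper. The only difference is cosmetic: the paper absorbs the relativistic factor through the scaling $\beta_1=1$, $\beta_2=\sqrt{1+\varepsilon^2}$ and lands exactly on the standard angular equation with $\bar\lambda$, whereas you keep $\beta_1=\beta_2=1$ and compare with the angle of the modified linear system equivalent to $-(r^{N-1}u')'=r^{N-1}\underline\lambda u$, $\underline\lambda=\lambda/(1+\varepsilon)$; you also spell out the final Sturm count via the zero $z_k/\sqrt{\underline\lambda}<R$.
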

\begin{proof}
 Since $\lambda > \lambda^{\textnormal{rad}}_k$, there exists $\bar\lambda$ such that 
\[
\lambda^{\textnormal{rad}}_k < \bar\lambda < \lambda
\]
and consequently, we can choose
$\varepsilon > 0$ such that
\begin{equation}
\label{eq:lambda-eps}
    \bar\lambda\sqrt{1+\varepsilon^2} <  \lambda - \varepsilon.
\end{equation}
In view of $(g_2)$ and $(q)$, 
\begin{equation} \label{eq:g2_bound_lower}
\lambda s^2 + \mu q(r) g(s) s \ge (\lambda-\varepsilon) s^2 \ge \bar\lambda\sqrt{1+\varepsilon^2} s^2, \qquad\text{for all } |s| < \delta \text{ and } r\in [0,R].\end{equation}
By Lemma~\ref{continuityalpha} and Remark \ref{trivialsolution}, there exists $\alpha_\mu > 0$ such that, for every $\alpha \in (0,\alpha_\mu)$,
\begin{equation}
\label{eq:bound_lower}
    |u_\alpha(r)| < \delta
    \quad\text{and}\quad
    \frac{|v_\alpha(r)|}{r^{N-1}} \leq \varepsilon,
    \qquad  \text{for all } r \in [0,R].
\end{equation}
Now, we set
\begin{equation} \label{b1b2}
     \beta_1 := 1,\quad  \beta_2 := \sqrt{1 + \varepsilon^2}
\end{equation}
in \eqref{polar} and estimate the terms in the angular equation \eqref{eqtheta} under
conditions \eqref{eq:g2_bound_lower} and \eqref{eq:bound_lower}, to get, for every $\alpha\in (0,\alpha_\mu)$,
$$
    \theta_\alpha'(r)
    \geq \frac{\sin^2\theta_\alpha(r)}{r^{N-1}}
    + r^{N-1}\,\overline\lambda\,\cos^2\theta_\alpha(r),\qquad\text{for all }r\in[0,R].
$$
Since $\overline\lambda > \lambda^{\textnormal{rad}}_k$, once again reasoning
as in \cite{BoCoNo20}, the Comparison Theorem for ODEs and the spectral
properties of the radial Laplacian give
$$
    \theta_\alpha(R)
    > \vartheta_{\lambda^{\textnormal{rad}}_k}(R) = \frac{\pi}{2} + (k-1)\pi,\qquad\text{for every }\alpha\in (0,\alpha_\mu),
$$
where $\vartheta_{\lambda^{\textnormal{rad}}_k}$ denotes the angular
component of the $k$-th radial eigenfunction of the Laplacian.

\end{proof}

\subsubsection{The behaviour of intermediate solutions and conclusion of the proof of Theorem \ref{mainresult}}\label{sec-radial3}

For every $\lambda \geq 0$ and every integer $j \geq 1$, we now define
$$
\mu_j^\star(\lambda)
\;:=\;
\inf \Bigl\{ \bar{\mu} > 0 \;\colon\;
\forall\, \mu > \bar{\mu},\;
\exists\, \alpha > 0 \ \text{such that} \ 
\theta_\alpha(R) > \tfrac{\pi}{2} + (j-1)\pi
\Bigr\}.
$$
Note that the above quantity actually depends on $\lambda$, since the angular coordinate $\theta_\alpha$ itself depends on $\lambda$. Of course, condition (ii) appearing in Theorem \ref{mainresult} is automatically satisfied. 
Moreover, by
Lemma \ref{lemmak} with $k = j \geq 1$, it follows that 
$$
\mu_j^\star(\lambda) = 0 \quad \text{if } \lambda > \lambda_j^{\mathrm{rad}}.
$$
As a consequence, condition (i) in Theorem \ref{mainresult} also holds true.

\medskip

In principle it could be that, for some $j \geq 1$, there exists $\lambda \in [0,\lambda_j^{\textnormal{rad}}]$ for which $\mu_j^\star(\lambda) = +\infty$. \emph{Assuming however that this is not the case} (a fact that will be shown later), the proof of Theorem \ref{mainresult} can be easily given.

\medskip

Indeed, let
$$
\lambda \in [\lambda^{\textnormal{rad}}_k, \lambda^{\textnormal{rad}}_{k+1}) \quad \text{and} \quad \mu > \mu_j^\star(\lambda), \qquad \text{for some } j\geq \max\{1,k\}.
$$
By the definition of $\mu_j^\star$, there exists $\alpha^\star = \alpha^\star(\lambda,\mu)>0$ such that
\begin{equation}\label{eq:alpha-star-final}
\theta_{\alpha^\star}(R)>\frac{\pi}{2}+(j-1)\pi .
\end{equation}
On the other hand, by Lemma \ref{lemmak+1}, there exists $\alpha_\star = \alpha_\star(\lambda,\mu)>0$ such that
\begin{equation}\label{eq:small-alpha-upper-final}
\theta_\alpha(R)<\frac{\pi}{2}+k\pi,
\qquad \text{for all } \alpha\in(0,\alpha_\star).
\end{equation}
Without loss of generality, we can take $\alpha_\star$ so small that $\alpha_\star < \alpha^\star$. Finally, Lemma \ref{lem:large} gives
$$
\theta_{\alpha}(R) < \frac{\pi}{2},
\qquad \text{for all } \alpha \geq R+1,
$$
and so $\alpha^\star < R+1$.

Since the map $\alpha\mapsto\theta_\alpha(R)$ is continuous, a first application of the Intermediate Value
Theorem shows that, for every $\ell=1,\dots,j$, there exists $\alpha_{l,\ell}\in(\alpha^\star,R+1)$ such that
$$
\theta_{\alpha_{l,\ell}}(R)=\frac{\pi}{2}+(\ell-1)\pi .
$$
As discussed at the end of Section \ref{sec-radial1}, this means that we have found $j$ different solutions of \eqref{eqequivalent}, and that $\ell-1$ (with $\ell = 1,\dots,j$) is the number of zeros of such solutions in $(0,R)$: these are the solutions which in Theorem \ref{mainresult} are denoted by $u_{\lambda,\mu,\ell}^{(l)}$. On the other hand, when $j > k$, a second application of the Intermediate Value
Theorem gives, for every $\ell=k+1,\dots,j$, the existence of $\alpha_{s,\ell}\in(0,\alpha_\star)$ such that
$$
\theta_{\alpha_{s,\ell}}(R)=\frac{\pi}{2}+(\ell-1)\pi.
$$
Similarly as before, this provides the $j-k$ solutions (denoted by $u_{\lambda,\mu,\ell}^{(s)}$ in Theorem \ref{mainresult}) with $\ell-1$ zeros on $(0,R)$ for $\ell=k+1,\dots,j$.

\begin{figure}[ht]
    \centering
    \includegraphics[width=0.85\linewidth]{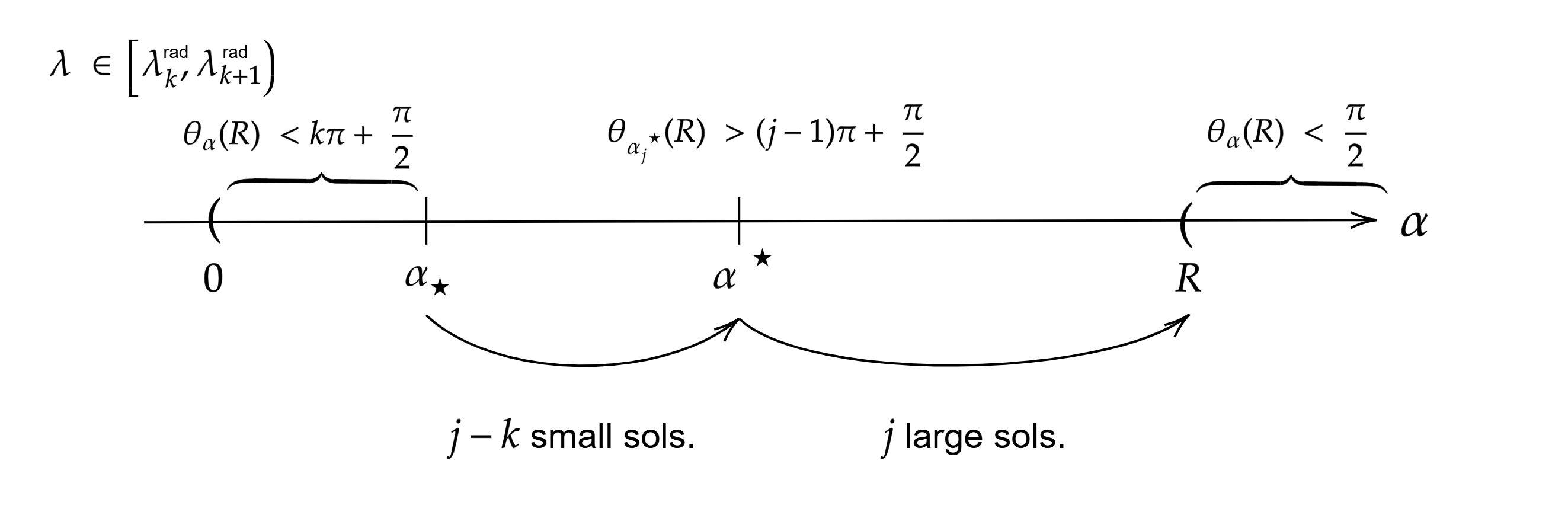}
    \caption{Representation of the different values $\alpha \mapsto \theta_\alpha(R)$ for $\mu \geq \mu_j^\star$, $j \geq k$.}
    \label{fig:}
\end{figure}

To conclude the proof, we must yet prove that
\begin{equation}\label{mustarfinito}
\mu_j^\star(\lambda) < + \infty \quad \text{for all } j \geq 1, \text{ and } \lambda \geq0.
\end{equation}
This will be a rather direct consequence of the following lemma.

\begin{lemma}
\label{claimellipse}
Let $\lambda \geq 0$. For every $r_0 \in (0,R)$, and for every integer $i \geq 1$, there exist $\tilde\mu_i(\lambda, r_0)\geq 1$ and $\Gamma_i(\lambda, r_0) \in (0, \delta)$ such that, for every $\mu > \tilde\mu_i(\lambda, r_0)$, every solution $(u(r),v(r))$ of \eqref{system} such that $\mu^{2/\gamma }u(r_0)^2 + \ v(r_0)^2 = \Gamma_i(\lambda, r_0)^2$ fulfills
    \begin{equation}
    \label{ellipsis}
    \theta(R)-\theta(r_0) > (i+1)\pi \end{equation}
where $\gamma$ is as in $(g_2)$ and the angular coordinate $\theta$ is defined by
$$(u(r), v(r)) = (\mu^{-1/\gamma}\rho(r) \cos \theta(r), - \rho(r)\sin \theta(r)).$$
\end{lemma}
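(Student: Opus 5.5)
The plan is to work in the rescaled coordinates of the statement, $u=\mu^{-1/\gamma}\rho\cos\theta$, $v=-\rho\sin\theta$ (that is, $\beta_1=\mu^{-1/\gamma}$, $\beta_2=1$ in \eqref{polar}). On the interval $[r_0,R]$ I will show two things for $\mu\ge1$. First, the angular velocity is of order $\mu^{1/\gamma}$ as long as $\rho$ stays in a fixed compact subset of $(0,\infty)$. Second, $\rho$ stays in such a set during the first $(i+2)\pi$ of rotation, because $\rho'/\rho$ is also of order $\mu^{1/\gamma}$, so $\log\rho$ changes by a bounded amount per unit of angle. Integrating $\theta'\ge c\,\mu^{1/\gamma}$ over $[r_0,R]$ then gives \eqref{ellipsis} once $\mu$ is large.

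\textbf{Step 1: lower bound for $\theta'$.} By \eqref{eqtheta} with $\beta_1=\mu^{-1/\gamma}$, $\beta_2=1$,
\[
\theta'=\mu^{1/\gamma}\Big(\frac{\sin^2\theta}{r^{N-1}\sqrt{1+(v/r^{N-1})^2}}+r^{N-1}\lambda\mu^{-2/\gamma}\cos^2\theta+\frac{r^{N-1}\mu q(r)g(u)u}{\rho^2}\Big).
\]
Assume $|u|<\delta$. Then $(g_2)$ and $(q)$ give $\mu q\,g(u)u/\rho^2\ge q_0C_1\rho^{\gamma-2}|\cos\theta|^\gamma$, where the powers of $\mu$ cancel exactly thanks to the scaling $\mu^{-1/\gamma}$. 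The $\lambda$-term is nonnegative since $\lambda\ge0$. Suppose also $\rho\in[a,b]$ with $0<a<b$, and $r\in[r_0,R]$. Then $|v|/r^{N-1}\le b/r_0^{N-1}$, and $\sin^2\theta+|\cos\theta|^\gamma\ge c_\gamma>0$. Together these give $\theta'\ge m(a,b,r_0)\,\mu^{1/\gamma}$ with $m>0$ independent of $\mu$.

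\textbf{Step 2: control of $\rho$.} Write $\rho\rho'=\mu^{2/\gamma}uu'+vv'$ and use $|u'|=|\varphi^{-1}(v/r^{N-1})|\le|v|/r_0^{N-1}$ on $[r_0,R]$. This gives $|\mu^{2/\gamma}uu'|\le\mu^{1/\gamma}\rho^2/r_0^{N-1}$. Similarly, $|vv'|\le\rho R^{N-1}\big(\lambda\mu^{-1/\gamma}\rho+\|q\|_\infty C_2\mu^{1/\gamma}\rho^{\gamma-1}\big)$. Hence, for $\mu\ge1$ and $|u|<\delta$,
\[
|\rho'|/\rho\le C\mu^{1/\gamma}(1+\rho^{\gamma-2}).
\]
Dividing by the bound of Step 1, $|d\log\rho/d\theta|\le C'(a,b)$, independently of $\mu$.

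Now I close a bootstrap. Take $\rho(r_0)=\Gamma$ and let $a=\Gamma e^{-K}$, $b=\Gamma e^{K}$ with $K:=C'\,(i+2)\pi$. Choose $\Gamma$ so small that $b<\delta$; this forces $|u|\le\mu^{-1/\gamma}\rho<\delta$, so $(g_2)$ applies. The only subtlety is that $C'$ depends on $a,b$, and hence on $\Gamma$ and $K$. To avoid circularity, I will fix the window first (say $\rho\in[\Gamma/e^{K},\Gamma e^K]$ with $\Gamma<\delta e^{-K}$ and $K$ chosen from constants uniform for $\rho\le\delta$). This is possible because the bound on $|\rho'|/\rho$ only improves for small $\rho$. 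The lower bound on $\theta'$ at small $\rho$ behaves like $\rho^{\gamma-2}$, so I will check carefully that the ratio stays bounded on the window; I expect this uniform-constants bookkeeping to be the main obstacle.

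\textbf{Conclusion.} By continuity, $\rho$ remains in $[a,b]$ on $[r_0,\bar r]$, where $\bar r$ is the first point with $\theta(\bar r)-\theta(r_0)=(i+2)\pi$, or on all of $[r_0,R]$ if no such point exists. In the second case Step 1 gives $\theta(R)-\theta(r_0)\ge m\,\mu^{1/\gamma}(R-r_0)$. This exceeds $(i+1)\pi$ for $\mu>\tilde\mu_i:=\max\{1,((i+1)\pi/(m(R-r_0)))^\gamma\}$. In the first case the inequality holds trivially, since $\theta$ cannot decrease back: $\theta'>0$ while $|u|\le\delta$, by \eqref{crossing} adapted to the scaled coordinates.
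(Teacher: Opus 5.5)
Your Step 1 is correct. Your overall plan is also the right mechanism: confine the orbit to a $\mu$-independent annulus for a fixed number of turns, use $\theta'\ge m\,\mu^{1/\gamma}$ there, and exploit the length of $[r_0,R]$. This is what Proposition \ref{Prop21} encodes. The paper applies it as a black box after the rescaling $r=t/\mu^{1/\gamma}$, $(x,y)=(\mu^{1/\gamma}u,v)$, which makes all comparison functions $\mu$-independent and gives a $t$-interval of length $\mu^{1/\gamma}(R-r_0)\to+\infty$.

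The gap is in Step 2, and it is a real obstruction, not bookkeeping. There is no bound on $|d\log\rho/d\theta|$ that is uniform for $\rho\le\delta$. Your justification, that $|\rho'|/\rho$ "only improves for small $\rho$", misses the point. The contribution $\mu^{2/\gamma}uu'/\rho^2\approx-\mu^{1/\gamma}\sin\theta\cos\theta/r^{N-1}$ from the essentially linear $u$-equation does not vanish as $\rho\to0$. Meanwhile, near the $u$-axis, $\theta'$ is only of order $\mu^{1/\gamma}\rho^{\gamma-2}$ for large $\mu$. Hence $C'(a,b)\ge c\,a^{2-\gamma}$ with $c>0$ independent of $\Gamma$.

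Your self-consistency requirement $K\ge C'(\Gamma e^{-K},\Gamma e^{K})(i+2)\pi$ then becomes $K e^{-(\gamma-2)K}\ge c\,(i+2)\pi\,\Gamma^{2-\gamma}$. This forces $\Gamma^{\gamma-2}\ge c\,e\,(\gamma-2)(i+2)\pi$, which is incompatible with $\Gamma<\delta$ once $i$ is large.

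This is not an artifact of crude constants. For the model system $x'=y$, $y'=-|x|^{\gamma-2}x$, the orbit through $(0,\Gamma)$ meets the $x$-axis at distance $\sim\Gamma^{2/\gamma}$. The orbit through $(\Gamma,0)$ meets the $y$-axis at distance $\sim\Gamma^{\gamma/2}$. So $\log\rho$ genuinely swings by $\sim\log(1/\Gamma)$ within a quarter turn. A pointwise bound on $d\log\rho/d\theta$ over an annulus cannot see the cancellation between consecutive quadrants, where $\rho$ first decreases and then increases.

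What is missing is a phase-sensitive trapping argument. One standard way to get it, for results like Proposition \ref{Prop21} (cf. \cite{BoZa13,BoGa19}), is as follows. Let $A_i,B_i$ be primitives of $a_i,b_i$. The energies $A_1(y)+A_2(x)$ and $B_1(y)+B_2(x)$ belong to the autonomous comparison systems $x'=a_1(y)$, $y'=-a_2(x)$ and $x'=b_1(y)$, $y'=-b_2(x)$. Along solutions of \eqref{hamiltonian}, each of these energies is monotone in every open quadrant. In each quadrant the two energies have opposite monotonicity, and the pattern alternates from one quadrant to the next.

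Following level arcs quadrant by quadrant confines the orbit, for $k$ half-turns, between an inner and an outer spiral that are independent of $t$ and $\mu$. The outer spiral shrinks to the origin with the starting radius, so for small $\rho_k^*$ it stays where $(g_2)$ applies. The inner spiral consists of finitely many level arcs at positive levels, so it stays away from the origin. With this annulus, your Step 1 and your conclusion go through.

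One small fix in your final step: after $\bar r$ you no longer know $|u|<\delta$, so you cannot invoke $\theta'>0$. Use instead that the $v$-axis is crossed only clockwise. This costs at most $\pi$, which your margin $(i+2)\pi$ covers.
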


Let us first show how from this lemma we can obtain \eqref{mustarfinito}. So, let $\lambda \geq 0$ and $j \geq 1$ be fixed, and freely choose $r_0 \in (0,R)$. Without loss of generality, we also assume $\delta \in (0,1)$.
Then, define $\hat \mu_j(\lambda) := \tilde\mu_j(\lambda, r_0)$ as given by Lemma \ref{claimellipse}, with $i=j$. We claim that
\begin{equation}\label{mustarfinito2} \mu_j^\star(\lambda) \leq \hat \mu_j(\lambda),
\end{equation}
which finally implies that $\mu_j^\star(\lambda)$ is finite (that is, \eqref{mustarfinito}).
So, let us fix $\mu > \hat \mu_j(\lambda)$.
Since $(u_0(r),v_0(r)) \equiv (0,0)$
and, as in \eqref{stima-apriori}, $u_{R+1}(r) \geq 1$ for every $r \in [0,R]$, and recalling that $\Gamma_j(\lambda,r_0) < \delta < 1 < \mu$, by an elementary continuity argument there exists $\hat\alpha_j=\hat\alpha_j(\lambda,\mu) \in (0,R+1)$ such that the solution $(u_{\hat\alpha_j},v_{\hat\alpha_j})$ satisfies
\begin{equation}
\label{ellipse}
\mu^{2/\gamma}u_{\hat\alpha_j}(r_0)^2+v_{\hat\alpha_j}(r_0)^2=\Gamma_j(\lambda,r_0)^2.
\end{equation}
Let us now consider the associated angular coordinate $\theta_{\hat\alpha_j}$, choosing $\beta_1 := \mu^{-1/\gamma}$ and $\beta_2 := 1$ as scaling parameters in \eqref{polar}, in such a way that it coincides with the angular coordinate used in Lemma \ref{claimellipse}.
Of course, we have
\[
\theta_{\hat\alpha_j}(R) = 
\theta_{\hat\alpha_j}(R) -\theta_{\hat\alpha_j}(0) =
\left(\theta_{\hat\alpha_j}(R) - \theta_{\hat\alpha_j}(r_0) \right) + \left(\theta_{\hat\alpha_j}(r_0) - \theta_{\hat\alpha_j}(0)\right).
\]
Now, by \eqref{ellipsis} the first term in the above sum is greater than $(j+1)\pi$. Moreover, due to the fact that the $v$-axis can be crossed only in the clockwise sense (as discussed just below \eqref{crossing}), the second one is greater than $-\pi$, cf. \cite[Lemma 3.3]{BoGa19}. Therefore we have 
\[
\theta_{\hat\alpha_j}(R) > (j+1) \pi - \pi = j \pi > \frac{\pi}{2} + (j-1)\pi.
\]
By the definition of $\mu_j^\star(\lambda)$, \eqref{mustarfinito2} is thus proved, concluding the proof.

\begin{proof}[Proof of Lemma 
\ref{claimellipse}]
We want to apply
    Proposition \ref{Prop21} to an appropriate problem. To this end, we introduce a rescaled variable $t$ defined by 
    \[
    r = \frac{t}{\mu^{1/\gamma}}.
    \] 
    Note that
    \[
    r \in [r_0,R] \quad  \Longleftrightarrow  \quad t \in I_\mu :=[\mu^{1/\gamma}r_0, \mu^{1/\gamma} R].
    \]
    Moreover, we set
$$
(x(t),y(t)) := \left( \mu^{1/\gamma}\, u\!\left(\frac{t}{\mu^{1/\gamma}} \right), \; v\!\left(\frac{t}{\mu^{1/\gamma}} \right) \right).
$$
With this transformation, the pair $(x(t),y(t))$ satisfies the system
$$
\begin{cases}
\begin{aligned}
x' &= \varphi^{-1}\left( 
\left(\frac{\mu^{1/\gamma}}{t} \right)^{N-1} y\right)=: X_\mu(t,y), \\
y' &= - \left(\frac{t}{\mu^{1/\gamma}}\right)^{N-1} 
\left( \frac{\lambda}{\mu^{2/\gamma}}x  + \mu^{(\gamma-1)/\gamma} q \left(\frac{t}{\mu^{1/\gamma}}\right)g\left(\frac{x}{\mu^{1/\gamma}}\right) \right)
=:\, -Y_\mu(t,x),
\end{aligned}
\end{cases}
$$
with $t \in I_\mu$. 
Now, for every $\mu \geq 1$, $t \in I_\mu$, and $y \in \mathbb{R}$, by the sign and monotonicity of $\varphi^{-1}$, it holds that
\[
\varphi^{-1} \left( \frac{y}{R^{N-1}}\right) y\leq X_\mu(t,y)y \leq  \varphi^{-1} \left( \frac{y}{r_0^{N-1}}\right) y. 
\]
On the other hand, for every $\mu \geq 1$, $t \in I_\mu$ and $x \in (-\delta,\delta)$, using assumptions  $(g_2)$ and $(q)$ we get
\[
r_0^{N-1} q_0 C_1 |x|^{\gamma}\leq Y_\mu(t,x)x \leq R^{N-1} \left( \lambda x^2 + \|q\|_{L^\infty(0,R)} C_2 |x|^{\gamma}\right).
\]
The above estimates allow us to enter the setting of Proposition \ref{Prop21} with the choices
\[
a_1(y) := \varphi^{-1} \left( \frac{y}{R^{N-1}}\right), \quad b_1(y) := \varphi^{-1} \left( \frac{y}{r_0^{N-1}}\right)
\]
and
\[
b_2(x) := r_0^{N-1} q_0 C_1 |x|^{\gamma-2} x, \quad a_2(x) := R^{N-1} \left( \lambda x + \|q\|_{L^\infty(0,R)} C_2 |x|^{\gamma-2}x\right).
\]
Accordingly, Proposition \ref{Prop21} can be applied (for $k = i+1$)
yielding two positive constants $\rho^*_{i+1}(\lambda,r_0)=:\Gamma_i(\lambda,r_0) \in (0,\delta)$
and $\tau^*_{i+1}(\lambda,r_0) > 0$ such that
\[
x \left(\mu^{1/\gamma}r_0\right)^2 + y \left(\mu^{1/\gamma}r_0\right)^2 = \Gamma_i(\lambda,r_0)^2
\]
implies
\[
\vartheta(\mu^{1/\gamma}R) - \vartheta(\mu^{1/\gamma}r_0) > (i+1)\pi
\]
provided $ \mu \geq 1$ and $|I_\mu| =\mu^{1/\gamma}(R - r_0) > \tau^*_{i+1}(\lambda,r_0)$. Here $\vartheta$ denotes the standard (clockwise) angular coordinate of the path $(x(t),y(t))$. 
Setting 
\[
\tilde\mu_i(\lambda, r_0) := \max \left\{1, \left(\frac{\tau^*_{i+1}(\lambda,r_0)}{R-r_0}\right)^{\gamma}\right\}
\]
and going back to the original variables $(u(r),v(r))$, we get the conclusion.
\end{proof}

\subsection{Bifurcation diagrams}\label{sec-bifurcation}

In this section, we present conjectural bifurcation diagrams, both with respect to the parameters $\mu$ and $\lambda$, for problem \eqref{radial}. We stress that the proposed bifurcation structure is neither rigorously justified (indeed, our shooting approach does not even provide, in principle, the existence of connected sets of solutions $(\lambda,\mu,u_{\lambda,\mu})$) nor fully validated by numerical computations. Nevertheless, we believe that the depicted diagrams accurately capture the actual bifurcation scenario. This conviction is supported by several independent pieces of evidence: they are fully consistent with the multiplicity scheme established in Theorem~\ref{mainresult}; they agree with the analytical and numerical results available for closely related problems (see, in particular, \cite{BoGa19,BoCoNo20,DaWa17}), and they are corroborated by the partial numerical simulations performed during this work.

For these reasons, we regard the proposed diagrams as a useful qualitative framework for interpreting the theoretical multiplicity results obtained in the previous sections. A rigorous theoretical analysis, together with a systematic numerical validation of these bifurcation diagrams, will be the subject of future investigation.

We now describe in more detail the diagrams appearing in Figure \ref{figmu} and Figure \ref{figlambda}. In all of them, we have depicted three branches of solutions, precisely positive solutions (in blue), one-node solutions (in purple), and two-node solutions (in orange). For simplicity, from now on, the radial eigenvalue 
$\lambda_k^{\textnormal{rad}}$ is simply denoted as $\lambda_k$.

\smallbreak
\noindent 
\textbf{Bifurcation with respect to $\mu$ - Figure \ref{figmu}}. The three diagrams in Figure~\ref{figmu} illustrate the conjectured bifurcation structure with respect to the parameter $\mu$, for different ranges of the parameter $\lambda$.

The first diagram corresponds to $\lambda \in [0,\lambda_1)$. In this regime, each branch is $\subset$-shaped, with a turning point located at $\mu_k^*(\lambda)$, $k=1,2,3$. Thus, increasing $\mu$, solutions appear in pairs, consisting of a small and a large solution with the same nodal properties, corresponding to the lower and the upper portions of each branch, respectively (when $\lambda = 0$, this is actually the situation already investigated in \cite{BoGa19}). 
We emphasize that, strictly speaking, this is not a genuine bifurcation phenomenon, since the branches accumulate on the trivial line $\{u=0\}$ only as $\mu\to+\infty$.

As the parameter $\lambda$ increases, these $\subset$-shaped branches progressively shift to the left. In this perspective, the second diagram corresponds to $\lambda\in(\lambda_1,\lambda_2)$. Here $\mu_1^\star(\lambda)=0$, meaning that the upper portion of the branch of positive solutions reaches the axis $\{\mu=0\}$, while its lower portion degenerates into the trivial line $\{u=0\}$. By contrast, the qualitative behaviour of the branches corresponding to one-node and two-node solutions remains essentially unchanged. Consequently, there exists one positive solution for $\mu\leq\mu_2^\star(\lambda)$, three solutions (one positive solution together with a pair of one-node solutions) for $\mu\in(\mu_2^\star(\lambda),\mu_3^\star(\lambda)]$, five solutions for $\mu>\mu_3^\star(\lambda)$, and so on if additional branches are taken into account.

Finally, the third diagram depicts the case $\lambda\in(\lambda_2,\lambda_3)$. Here, $\mu_1^\star(\lambda)=\mu_2^\star(\lambda)=0$, so that both the branch of positive solutions and the branch of one-node solutions reach the axis $\{\mu=0\}$, whereas the branch of two-node solutions still has a $\subset$-shape. As a consequence, there are two solutions (one positive and one with one node) for $\mu\leq\mu_3^\star(\lambda)$, while for $\mu>\mu_3^\star(\lambda)$ an additional pair of two-node solutions appears. The same pattern continues as $\mu$ increases, with further pairs of higher-nodal solutions arising whenever additional branches are considered.

\smallbreak
\noindent
\textbf{Bifurcation with respect to $\lambda$ - Figure \ref{figlambda}}. 
Figure \ref{figlambda} shows four different bifurcation diagrams with respect to the parameter $\lambda$, for different values of the parameter $\mu$. 

Here, the (not proved in this paper, but highly expected) proposed bifurcation scenario is that, for every $\mu \geq 0$, the branch of $(k-1)$-node solutions bifurcates from the eigenvalue $\lambda_k$. In the first diagram, the case $\mu = 0$ is depicted: in this setting, each branch immediately moves to the right, cf. Remark \ref{rem_mu=0}. Increasing $\mu$, the branches are pushed to the left (but still anchored to their bifurcation point $(\lambda_k,0)$). In particular, in the third diagram, the branch of positive solutions reached the axis $\{\lambda = 0\}$: accordingly, a pair of positive solutions is obtained for $\lambda \in [0,\lambda_1)$. For $\lambda \in [\lambda_1,\lambda_2)$, on the other hand, only one positive solution is obtained: in this case, however, other pairs of higher node solutions may be provided by the other branches. For instance, in the situation depicted in the third diagram, a pair of one-node solutions exists, while, for larger values of $\mu$, like in the fourth diagram, an additional pair of two-node solutions appears as well. Note that, in this last diagram, also the branch of one-node solutions has reached the axis $\{\lambda = 0\}$, so giving the existence of a pair of one-node solutions also in the interval $[0,\lambda_1)$. Of course, considering more branches and increasing $\mu$ leads to the full multiplicity scheme of Theorem \ref{mainresult}.

\begin{figure}[ht]
\centering

\includegraphics[width=0.48\textwidth]{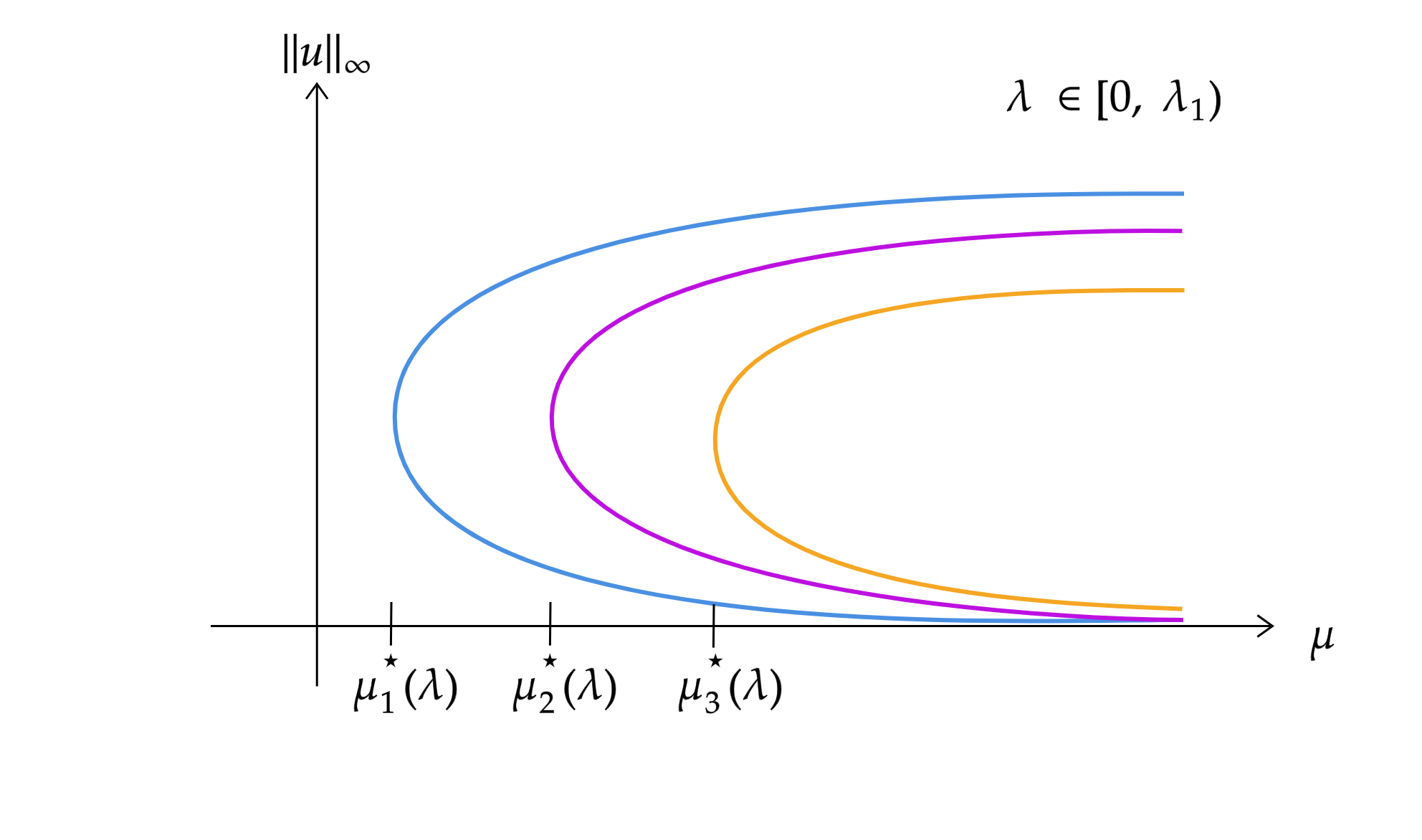}
\hspace{0.02\textwidth}
\includegraphics[width=0.48\textwidth]{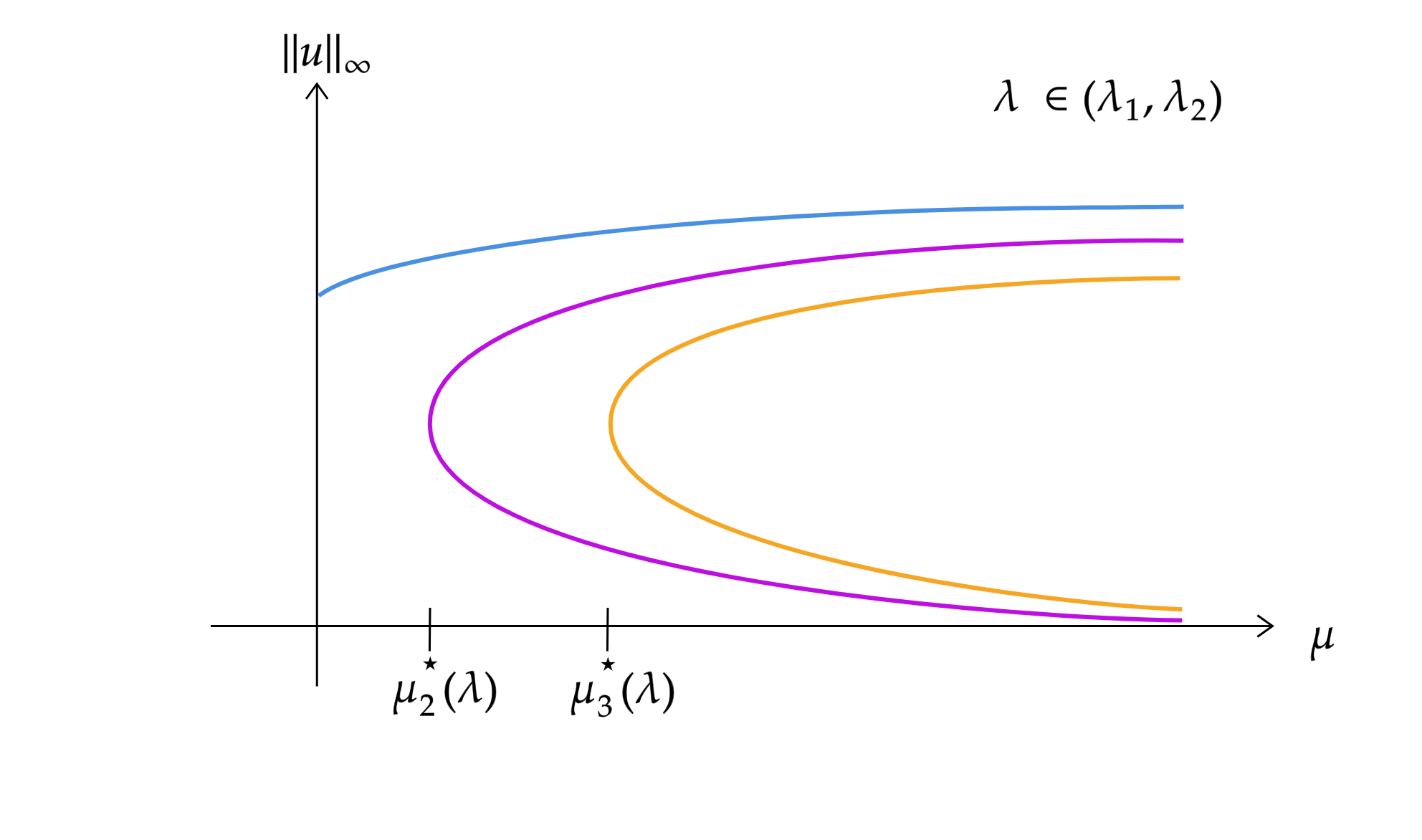}

\includegraphics[width=0.48\textwidth]{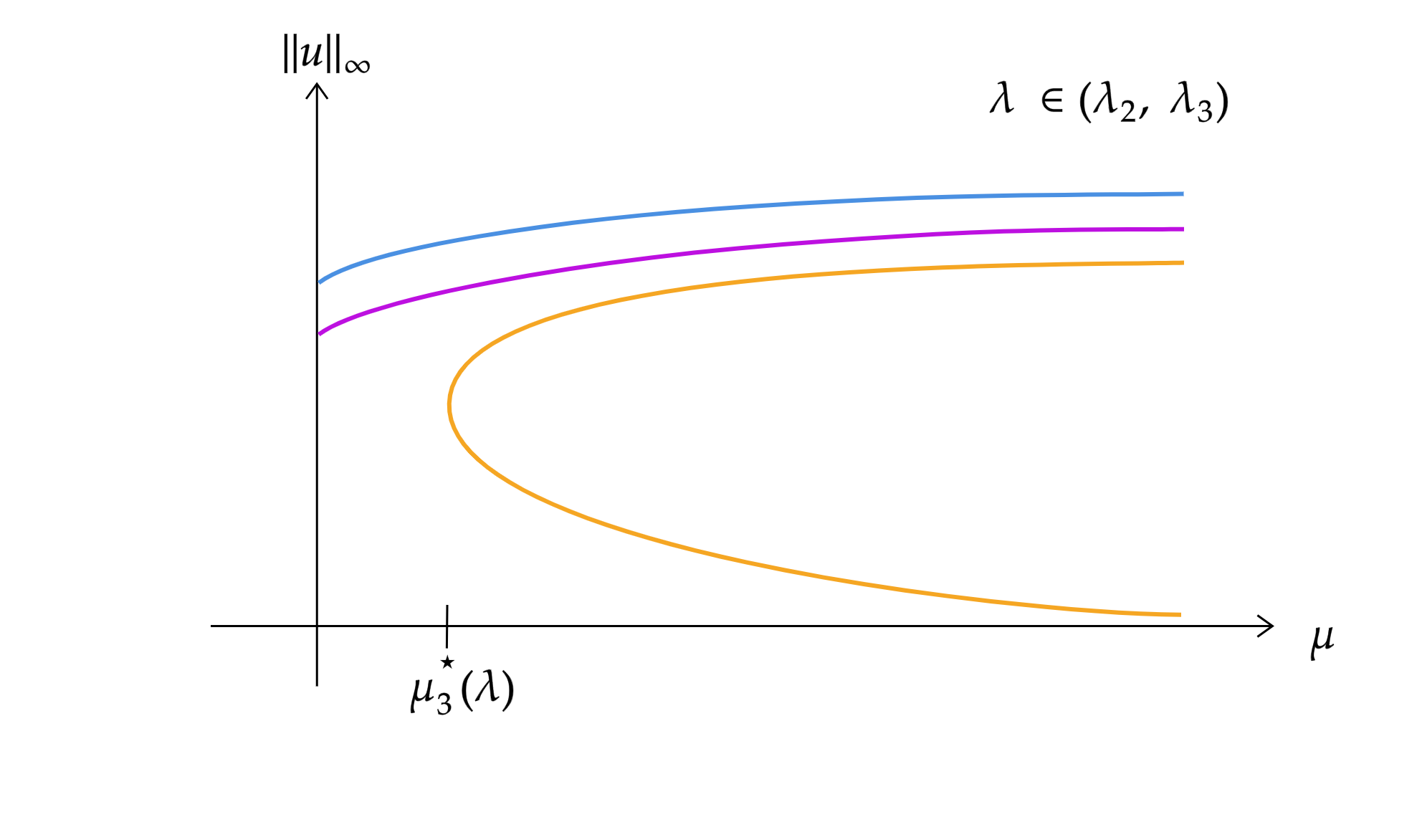}

\caption{Bifurcation diagrams with respect to $\mu$, for different values of $\lambda$.}
\label{figmu}
\end{figure}

\begin{figure}[ht]
\centering

\includegraphics[width=0.48\textwidth]{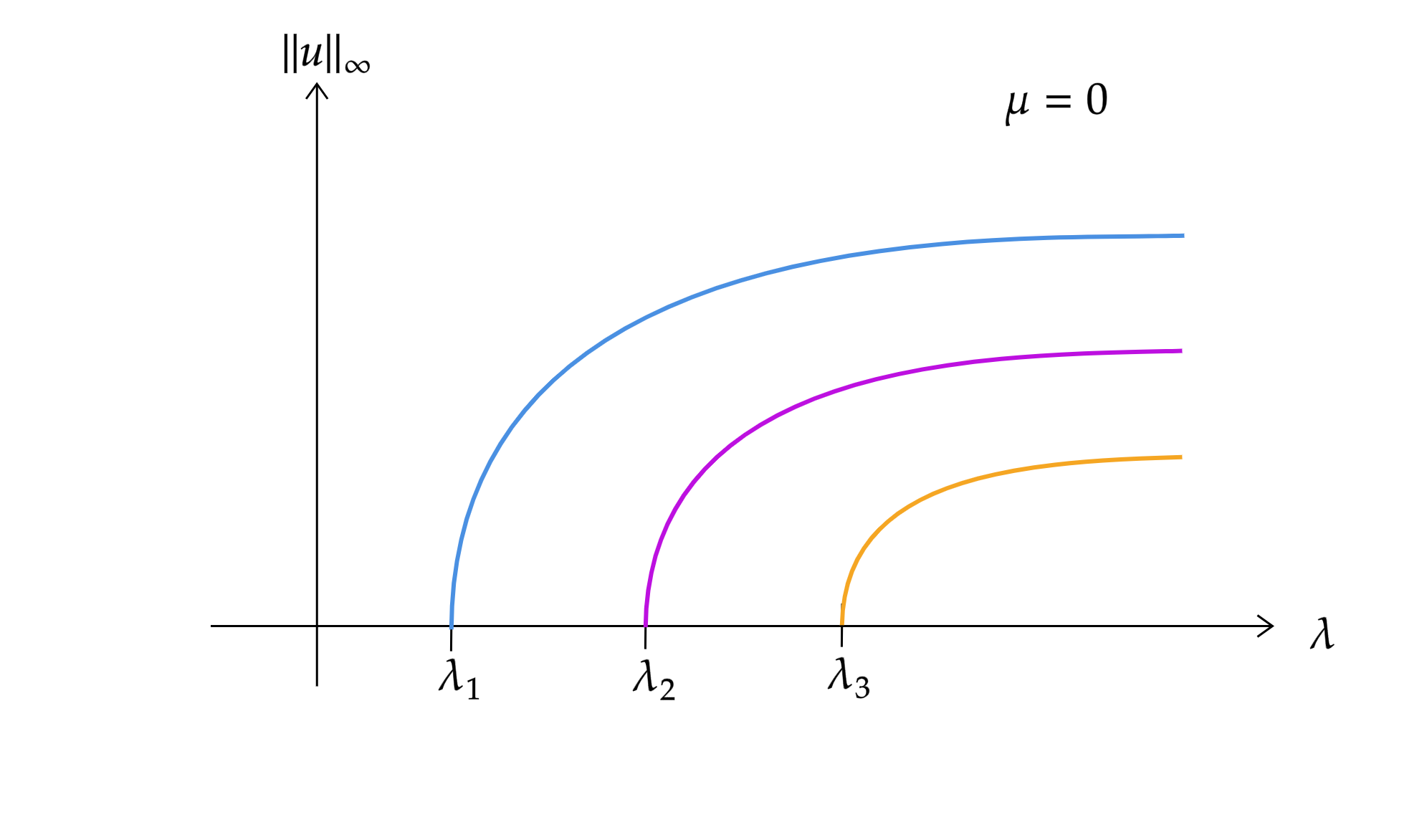}
\hspace{0.02\textwidth}
\includegraphics[width=0.48\textwidth]{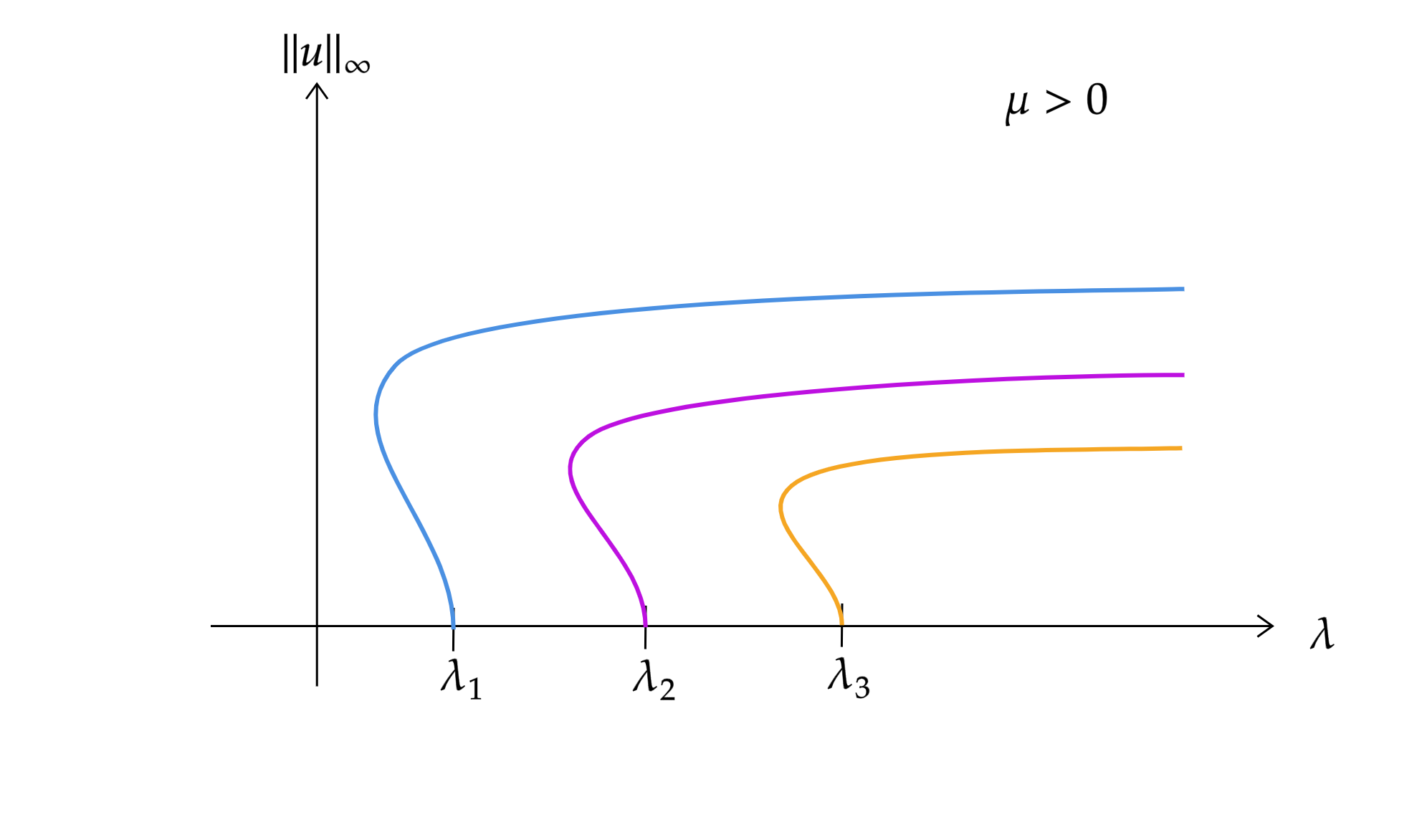}

\includegraphics[width=0.48\textwidth]{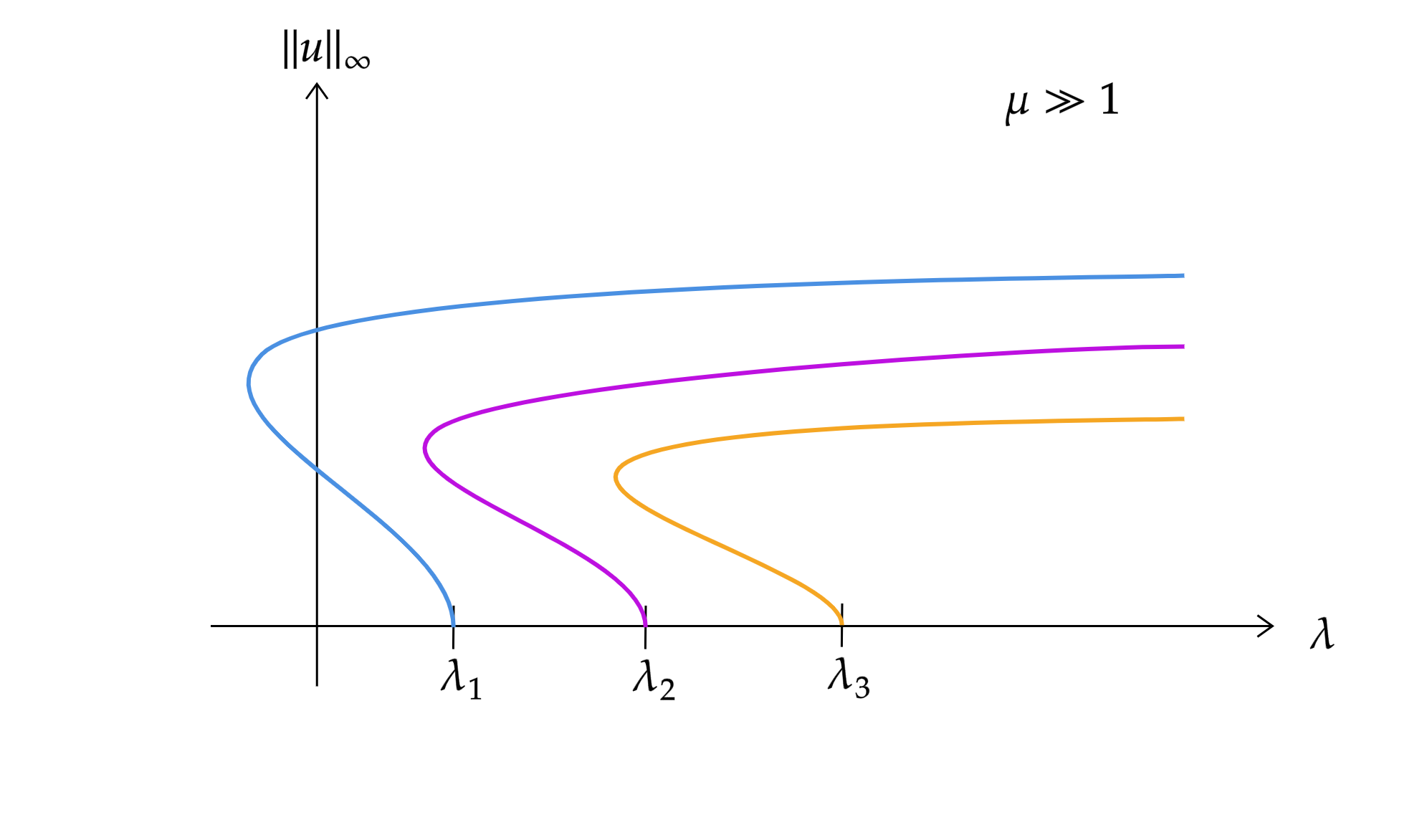}
\hspace{0.02\textwidth}
\includegraphics[width=0.48\textwidth]{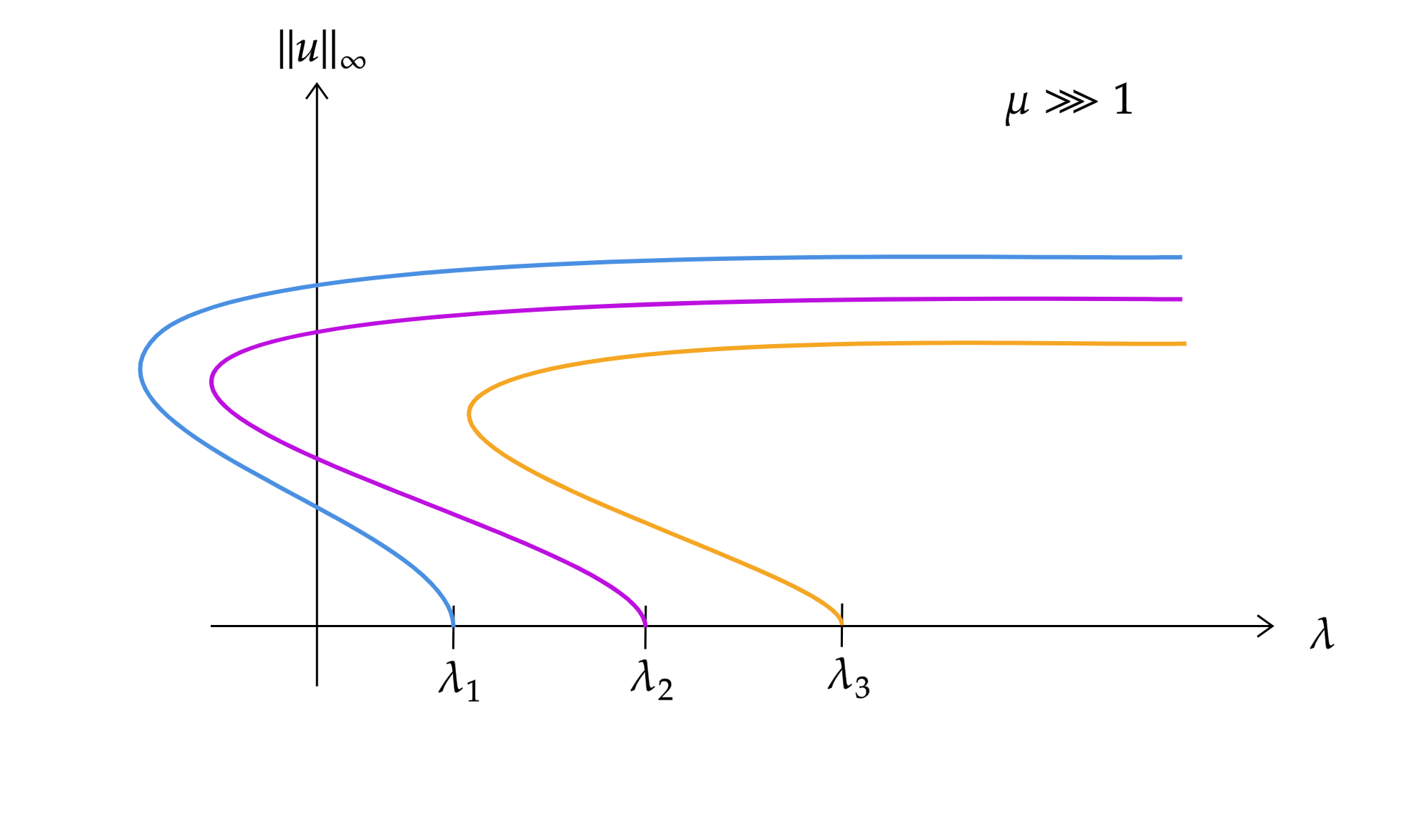}

\caption{Bifurcation diagrams with respect to $\lambda$, for different values of $\mu$.}
\label{figlambda}
\end{figure}

\appendix

\section{An auxiliary result}

In this section, we are going to present an auxiliary result dealing with a planar Hamiltonian system of the type
\begin{equation}
\label{hamiltonian}
    \begin{cases}
        x' = X(t,y) \\
        y' = -Y(t,x),
    \end{cases}
\end{equation}
where $X, Y : I \times \mathbb{R} \to \mathbb{R}$ are locally Lipschitz continuous functions. Roughly speaking, it ensures that, whenever suitable (local) sign-conditions are assumed, the number of rotations around the origin of the planar path $(x(t), y(t))$ becomes arbitrarily large as the width of the interval $I$ increases. This result was first 
proved in \cite[Lemma 3.2]{BoZa13} in the particular case 
$X(t,y) = y$ and then generalized in \cite[Proposition 2.1]{BoGa19}. We include such a version here for the sake of completeness.

In what follows, we will write (whenever possible, namely for $x(t)^2 + y(t)^2 > 0$) solutions of \eqref{hamiltonian} in (clockwise) polar coordinates as
\begin{equation}
    x(t) = \rho(t)\cos\theta(t), \qquad y(t) = -\rho(t)\sin\theta(t), \label{eq:2.2}
\end{equation}
with $\rho(t) > 0$. Notice that, of course, the angular coordinate $\theta(t)$ is defined up to integer multiples of $2\pi$; however, the expression $\theta(t_2) - \theta(t_1)$, for any $t_1, t_2 \in I$, is uniquely determined, depending on the path $(x(t), y(t))$ only.

\begin{proposition}
\label{Prop21}
Let $a_i, b_i : (-\delta, \delta) \to \mathbb{R}$, with $i = 1, 2$, be locally Lipschitz continuous functions such that
\begin{equation}
    0 < a_1(s)s \leq b_1(s)s \quad \text{and} \quad 0 < b_2(s)s \leq a_2(s)s, 
    \quad \text{for every } s \neq 0. \label{eq:2.3}
\end{equation}
Then, for every positive integer $k$, there exists $\tau_k^*>0$ and $\rho_k^* \in (0,\delta)$ such that, for every interval $I := [t_0, t_1] \subset \mathbb{R}$ with $|I| = t_1 - t_0 > \tau_k^*$ and for every locally Lipschitz continuous functions $X, Y : I \times \mathbb{R} \to \mathbb{R}$ satisfying
\begin{equation}
\label{first}
    a_1(y)y \leq X(t,y)y \leq b_1(y)y, \quad \text{for every } t \in I,\ y \in (-\delta, \delta)
\end{equation}
and
\begin{equation}
\label{second}
    b_2(x)x \leq Y(t,x)x \leq a_2(x)x, \quad \text{for every } t \in I,\ x \in (-\delta, \delta),
\end{equation}
it holds that any solution $(x(t), y(t))$ of \eqref{hamiltonian}, defined on $I$ and satisfying $x(t_0)^2 + y(t_0)^2 = (\rho_k^*)^2$, fulfills $x(t)^2 + y(t)^2 > 0$ for every $t \in I$ and
$$
    \theta(t_1) - \theta(t_0) > k\pi.
$$
\end{proposition}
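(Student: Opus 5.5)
The plan is to argue through the angular equation and compare the unknown system with two autonomous planar systems built from the bounds $a_i,b_i$; we also need to rule out passage through the origin. In the clockwise polar coordinates \eqref{eq:2.2} a direct computation gives
\[
\theta'(t)=\frac{X(t,y(t))\,y(t)+Y(t,x(t))\,x(t)}{\rho(t)^2}.
\]
By \eqref{first}–\eqref{second} and \eqref{eq:2.3}, as long as $|x|,|y|<\delta$ this satisfies
\[
\theta'\ \ge\ \frac{a_1(y)y+b_2(x)x}{\rho^2}\ >0 .
\]
So the path rotates clockwise inside the box $(-\delta,\delta)^2$. We then have to quantify a minimal angular speed and keep the path inside a fixed annulus for a long time.

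First we confine the solution. Introduce the two autonomous "energies"
\[
E_{\rm in}(x,y):=\int_0^y a_1+\int_0^x b_2,\qquad E_{\rm out}(x,y):=\int_0^y b_1+\int_0^x a_2 .
\]
Both are positive definite near $0$. Their level sets are closed star-shaped curves around the origin for small levels (Jordan curves, since the primitives are increasing in $|s|$). The key monotonicity uses \eqref{first}–\eqref{second}. Along a solution we get
\[
\frac{d}{dt}E_{\rm in}=a_1(y)X-b_2(x)Y .
\]
This quantity has no sign in general, so the argument instead uses the classical trick of \cite{BoZa13,BoGa19}, a sector-by-sector comparison. In each open quadrant the path is a graph over one variable, and the slope $dy/dx=-Y/X$ lies between the slopes of the extremal autonomous systems. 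Hence, within one quadrant, the path stays between the level curves of $E_{\rm in}$ and $E_{\rm out}$ passing through its entry point.

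Iterating over a bounded number $4k$ of quadrant crossings yields the following. Given $k$, choose $\rho_k^*$ small enough that the path starting on the circle of radius $\rho_k^*$ remains in an annulus $\{\,r_k\le\rho\le\delta/2\,\}$ during $k$ half-turns. To do this, fix outer and inner level sets nested $4k$ times starting from radius $\rho_k^*$; this is possible by continuity of these nested Poincaré-type maps near $0$. In particular $x^2+y^2>0$ on this part of $I$.

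Second, we bound the time. On the compact annulus $\{r_k\le\rho\le\delta/2\}$, continuity and strict positivity of $a_1(s)s$ and $b_2(s)s$ give
\[
\theta'\ \ge\ c_k:=\min_{r_k\le\rho\le\delta/2}\frac{a_1(y)y+b_2(x)x}{\rho^2}>0 .
\]
Hence $k$ half-turns take time at most $\tau_k^*:=(k+1)\pi/c_k$. So if $|I|>\tau_k^*$, the path performs more than $k$ half-turns before leaving the annulus; more precisely, either it completes them or it remains in the annulus, and in both cases $\theta(t_1)-\theta(t_0)>k\pi$. Since $\theta$ is increasing while in the box, after the needed rotation further time only increases $\theta$ as long as the path stays in the box. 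For the remaining time after $k$ half-turns we argue by exhaustion, simply extending the confinement to $4(k+1)$ quadrant crossings and using $\theta$ nondecreasing. If the path is still inside when the interval ends, we are done. It cannot exit, because by construction exit requires more half-turns than it completes within time $\tau^*$.

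The main obstacle is the confinement step: making the quadrant comparison rigorous when $X,Y$ depend on $t$. Here I would follow \cite[Lemma 3.2]{BoZa13} exactly, treating orbits as graphs $y=y(x)$ or $x=x(y)$ in each quadrant and using the differential inequality for $dy/dx$ together with the comparison theorem for scalar ODEs.
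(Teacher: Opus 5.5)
Your architecture (finitely many quadrant transits controlled by autonomous comparison curves, then a uniform lower bound $\theta'\ge c_k$ on an annulus inside $(-\delta,\delta)^2$) is the right one. However, the comparison functions are wrong and, more seriously, the final step does not work.

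\textbf{The comparison functions.} In a quadrant, the extremal values of the slope $-Y/X$ are $b_2(x)/|b_1(y)|$ and $a_2(x)/|a_1(y)|$. These are the orbit slopes of $x'=b_1(y),\ y'=-b_2(x)$ and of $x'=a_1(y),\ y'=-a_2(x)$. The barriers are therefore the level sets of $\mathcal{E}_a:=\int_0^y a_1+\int_0^x a_2$ and $\mathcal{E}_b:=\int_0^y b_1+\int_0^x b_2$. Your $E_{\rm in},E_{\rm out}$ have intermediate slopes, so the path can cross their level sets. Also, your formula for $\frac{d}{dt}E_{\rm in}$ pairs the partial derivatives with the wrong components of the field. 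Along solutions in the box, $\frac{d}{dt}\mathcal{E}_a=a_2(x)X-a_1(y)Y\le 0\le \frac{d}{dt}\mathcal{E}_b$ where $xy<0$, with the inequalities reversed where $xy>0$. So the ``outer'' curve alternates from one transit to the next. This is repairable: composing the resulting upper and lower transit maps a fixed number of times still gives continuous increasing maps vanishing only at $0$. That is all your choice of $\rho_k^*$ and of the annulus requires.

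\textbf{The gap after the rotation.} The length $|I|$ is only bounded from below. By your own bound $\theta'\ge c_k$, the path keeps making transits as long as it stays in the annulus, but your confinement covers only a fixed number of them. Nothing prevents the amplitude from growing from one transit to the next. For instance, take $a_1=b_1=b_2=\mathrm{id}$, $a_2=4\,\mathrm{id}$; then $x'=y$, $y'=-q(t)x$ with $q(t)\in[1,4]$ switched near the axes roughly doubles the amplitude at each half-turn. So on long intervals the path can leave $(-\delta,\delta)^2$, where $\theta'$ has no sign. The sentence ``exit requires more half-turns than it completes within time $\tau^*$'' confuses $\tau_k^*$ with $|I|$.

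With conditions imposed only for $|x|,|y|<\delta$, the accumulated angle can then really be lost. $X$ and $Y$ are free outside the strips, so the trajectory can be made to loop counterclockwise around the box, crossing the axes far from the origin, and then park. An extra ingredient is therefore unavoidable. The natural one is that the $y$-axis is always crossed clockwise, i.e.\ $\theta'>0$ whenever $\theta\in\frac{\pi}{2}+\pi\mathbb{Z}$. It is available in Lemma \ref{claimellipse}, where the bound on $X_\mu$ holds for all $y\in\mathbb{R}$.

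With this ingredient the proof closes:
\begin{itemize}
\item Your rate bound yields a first time $t^*\le t_0+\tau_k^*$ with $\theta(t^*)-\theta(t_0)=(k+1)\pi$. If there were none, you would get $\theta(t_1)-\theta(t_0)\ge c_k|I|>(k+1)\pi$.
\item At $t^*$ the path has reached the unique level $\frac{\pi}{2}+n\pi\in(\theta(t_0)+k\pi,\theta(t_0)+(k+1)\pi]$. It can never recross this level backwards, so $\theta(t_1)-\theta(t_0)>k\pi$.
\item Finally, $x^2+y^2>0$ on all of $I$ follows from uniqueness, since $X(t,0)=Y(t,0)=0$. It does not follow from the confinement.
\end{itemize}
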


\section*{Acknowledgments}
A. Boscaggin and F. Colasuonno were partially supported by GNAMPA - INdAM. R.~Ziegele gratefully acknowledges Andrea Tellini for valuable discussions on the numerical simulations.


\noindent 



\begin{thebibliography}{99}

\bibitem{Az14}
A. Azzollini,
Ground state solution for a problem with mean curvature operator in Minkowski space,
\emph{J. Funct. Anal.} 266 (2014), 2086--2095.

\bibitem{Az16}
A. Azzollini,
On a prescribed mean curvature equation in Lorentz--Minkowski space,
\emph{J. Math. Pures Appl.} (9) 106 (2016), 1122--1140.

\bibitem{BaSi82}
R. Bartnik and L. Simon,
Spacelike hypersurfaces with prescribed boundary values and mean curvature,
\emph{Comm. Math. Phys.} 87 (1982/83), 131--152.

\bibitem{BeJeTo13}
C. Bereanu, P. Jebelean, and P. J. Torres,
Positive radial solutions for Dirichlet problems with mean curvature operators
in Minkowski space,
\emph{J. Funct. Anal.} 264 (2013), 270--287.

\bibitem{BeJeMa14}
C. Bereanu, P. Jebelean, and J. Mawhin,
The Dirichlet problem with mean curvature operator in Minkowski space:
a variational approach,
\emph{Adv. Nonlinear Stud.} 14 (2014), 315--326.

\bibitem{BoCoFo19}
D.~Bonheure, F.~Colasuonno, and J.~F\"oldes,
\emph{On the Born--Infeld Equation for Electrostatic Fields with a Superposition of Point Charges},
Ann. Mat. Pura Appl. \textbf{198} (2019), no.~3, 749--772.

\bibitem{BoIa19}
D.~Bonheure and A.~Iacopetti,
\emph{On the Regularity of the Minimizer of the Electrostatic Born--Infeld Energy},
Arch. Ration. Mech. Anal. \textbf{232} (2019), 697--725.

\bibitem{Bo11}
A.~Boscaggin.
\newblock Subharmonic solutions of planar Hamiltonian systems: A rotation number approach.
\newblock {\em Adv. Nonlinear Stud.}, 11(1):77--103, 2011.

\bibitem{BoZa13}
A.~Boscaggin and F.~Zanolin.
\newblock Pairs of nodal solutions for a class of nonlinear problems with one-sided growth conditions.
\newblock {\em Adv. Nonlinear Stud.}, 13(1):13--53, 2013.

\bibitem{BoGa19}
A. Boscaggin and M. Garrione,
Pairs of nodal solutions for a Minkowski-curvature boundary value problem
in a ball,
\emph{Commun. Contemp. Math.} 21 (2019), 1850006, 18 pp.

\bibitem{BoCoNo20}
A.~Boscaggin, F.~Colasuonno, and B.~Noris.
\newblock Multiplicity of solutions for the Minkowski-curvature equation via shooting method.
\newblock {\em Bruno Pini Math. Anal. Semin.}, 11(1):1--17, 2020.

\bibitem{BoCoNo20b}
A.~Boscaggin, F.~Colasuonno, and B.~Noris.
\newblock Positive radial solutions for the Minkowski-curvature equation with Neumann boundary conditions.
\newblock {\em Discrete Contin. Dyn. Syst. Ser. S},
13(7):1921--1933, 2020.

\bibitem{ByIkMaMa24}
J.~Byeon, N.~Ikoma, A.~Malchiodi, and L.~Mari,
\emph{Existence and Regularity for Prescribed Lorentzian Mean Curvature Hypersurfaces, and the Born--Infeld Model},
Ann. PDE \textbf{10} (2024), no.~1.

\bibitem{ChYa76}
S.-Y. Cheng and S.-T. Yau,
Maximal spacelike hypersurfaces in the Lorentz-Minkowski spaces,
\emph{Ann. of Math.} 104 (1976), 407--419.

\bibitem{CoCoRi14}
I.~Coelho, C.~Corsato, and S.~Rivetti.
\newblock Positive radial solutions of the Dirichlet problem for the Minkowski-curvature equation in a ball.
\newblock {\em Topol. Methods Nonlinear Anal.}, 44(1):23--39, 2014.

\bibitem{CoObOmRi13}
C. Corsato, F. Obersnel, P. Omari, and S. Rivetti,
On the lower and upper solution method for the prescribed mean curvature
equation in Minkowski space,
\emph{Discrete Contin. Dyn. Syst.} Suppl. (2013), 159--169.

\bibitem{CoObOmRi13b}
C. Corsato, F. Obersnel, P. Omari, and S. Rivetti,
Positive solutions of the Dirichlet problem for the prescribed mean curvature
equation in Minkowski space,
\emph{J. Math. Anal. Appl.} 405 (2013), 227--239.

\bibitem{Da16}
G. Dai,
Bifurcation and positive solutions for problem with mean curvature operator
in Minkowski space,
\emph{Calc. Var. Partial Differential Equations} 55 (2016), no.~4, Paper No.~72.

\bibitem{Da17}
G. Dai,
Global bifurcation for problem with mean curvature operator on general domain,
\emph{Nonlinear Differential Equations Appl. NoDEA} 24 (2017), no.~3,
Paper No.~30.

\bibitem{DaWa17}
G. Dai and J. Wang,
Nodal solutions to problem with mean curvature operator in Minkowski space,
\emph{Differential Integral Equations} 30 (2017), no.~5--6, 463--480.

\bibitem{Ge83}
C. Gerhardt,
$H$-surfaces in Lorentzian manifolds,
\emph{Comm. Math. Phys.} 89 (1983), 523--553.

\bibitem{GiNiNi79}
B.~Gidas, W.-M.~Ni, and L.~Nirenberg.
\newblock Symmetry and related properties via the maximum principle.
\newblock {\em Comm. Math. Phys.}, 68(3):209--243, 1979.

\bibitem{MaMa25}
L.~Maniscalco and L.~Mari,
\emph{Prescribing the Mean Curvature of an Achronal Hypersurface as a Measure: The Case of 3D Spacetimes},
preprint, arXiv:2512.17670, 2025.

\bibitem{Mi21}
M. Mih\u{a}ilescu,
The spectrum of the mean curvature operator,
\emph{Proc. Roy. Soc. Edinburgh Sect. A} 151 (2021), no.~2, 451--463.

\bibitem{PaRa19}
N.~S. Papageorgiou, V.~D. R\u{a}dulescu, and D.~D. Repov\v{s},
Nonlinear Analysis---Theory and Methods,
\emph{Springer Monographs in Mathematics}. Springer, Cham, 2019.

\bibitem{Re00} 
C. Rebelo, A note on uniqueness of Cauchy problems associated to planar
\emph{Hamiltonian systems}, Portugal. Math. 57 (2000), 415--419.

\bibitem{Sz86}
A. Szulkin,
Minimax principles for lower semicontinuous functions and applications to
nonlinear boundary value problems,
\emph{Ann. Inst. H. Poincar\'e C Anal. Non Lin\'eaire} 3 (1986), no.~2,
77--109.
\end{thebibliography}
\end{document}